\definecolor{lawngreen}{RGB}{0,250,154}
\definecolor{pink}{RGB}{250,0,154}
\definecolor{gray}{RGB}{192,192,192}
\newtheorem{theorem}{Theorem}
\newtheorem{lemma}[theorem]{Lemma}
\newtheorem{corollary}[theorem]{Corollary}
\newtheorem{remark}[theorem]{Remark}
\theoremstyle{definition}
\newtheorem{Def}[theorem]{Definition}
\crefname{assumption}{Assumption}{Assumptions}
\crefname{lemma}{Lemma}{Lemmata}
\crefname{theorem}{Theorem}{Theorems}
\crefname{corollary}{Corollary}{Corollaries}
\crefname{prop}{Proposition}{Propositions}
\crefname{claim}{Claim}{Claims}
\crefname{procedure}{Procedure}{Procedures}
\crefname{figure}{Figure}{Figures}
\crefname{remark}{Remark}{Remarks}
\crefname{section}{Section}{Sections}
\crefname{procedure}{Procedure}{Procedures}
\crefname{table}{Table}{Tables}
\crefname{Def}{Definition}{Definition}
\newcommand{\R}{\mathbb{R}}
\newcommand{\Z}{\mathbb{Z}}
\newcommand{\Q}{\mathbb{Q}}
\newcommand{\eps}{\varepsilon}
\renewcommand{\tilde}{\widetilde}
\renewcommand{\hat}{\widehat}
\renewcommand{\bar}[1]{\overline{#1}}
\newcommand{\DMILP}{\mathscr T^{D-MI}}
\newcommand{\MILP}{\mathscr T^{MI}}
\newcommand{\DMILPR}{\DMILP_R }
\newcommand{\MILPR}{\MILP_R}
\newcommand{\GDMILP}{\mathscr T^{\hat{D-MI}}}
\newcommand{\GMILP}{\mathscr T^{\hat{MI}}}
\newcommand{\GDMILPR}{\GDMILP_R }
\newcommand{\GMILPR}{\GMILP_R}
\newcommand{\inte}{\operatorname{int}}
\newcommand{\relintr}{\operatorname{relint}}
\newcommand{\relint}{\relintr}
\newcommand{\rec}{\operatorname{rec}}
\newcommand{\aff}{\operatorname{aff}}
\newcommand{\cl}{\operatorname{cl}}
\newcommand{\proj}{\operatorname{proj}}
\newcommand{\cone}{\operatorname{cone}}
\newcommand{\floor}[1]{\left\lfloor#1\right\rfloor}
\newcommand{\ceil}[1]{\left\lceil#1\right\rceil}
\title{Mixed-integer bilevel representability}
\author[1]{Amitabh Basu}
\author[2]{Christopher Thomas Ryan}
\author[3]{Sriram Sankaranarayanan\thanks{Email: ssankar5@jhu.edu}}
\affil[1]{Department of Applied Mathematics and Statistics, Johns Hopkins University}
\affil[2]{Booth School of Business, University of Chicago}
\affil[3]{Department of Civil Engineering, Johns Hopkins University}
\date{}
\begin{document}
\maketitle
\begin{abstract}
	{We study the representability of sets that admit extended formulations using mixed-integer bilevel programs.} We show that feasible regions modeled by continuous bilevel constraints (with no integer variables), complementarity constraints, and polyhedral reverse convex constraints are all finite unions of polyhedra. Conversely, any finite union of polyhedra can be represented using any one of these three paradigms. We then prove that the feasible region of bilevel problems with integer constraints exclusively in the upper level is a finite union of sets representable by mixed-integer programs and vice versa. Further, we prove that, up to topological closures, we do not get additional modeling power by allowing integer variables in the lower level as well. To establish the last statement, we prove that the family of sets that are finite unions of mixed-integer representable sets forms an algebra of sets (up to topological closures). %Finally, we show that if the number of integer variables and if the number of lower-level variables is fixed, we can solve the bilevel problem in polynomial time. \amitabh{We don't have the last bit any more, right?}
%We show that the feasible regions modeled by continuous bilevel constraints, complementarity constraints and polyhedral reverse convex constraints are the same. We show that their feasible regions model the finite union of {\em polyhedra}. We contrast it with the modeling power of mixed-integer problems that model finite union of {\em polytopes} along with a monoid. Then we show that allowing both integer constraints and continuous bilevel constraints, one can model finite union of polyhedra and a finite union of monoids. \sri{Hopefully}. Having said that, by fixing certain variables, we show non-emptiness of a {\em relaxed} mixed-integer bilevel set can be decided in polynomial time.
\end{abstract}

\section{Introduction}\label{s:introduction}

%\chris{What if anything should we say about rationality in the introduction? We might say a word of two about why we are careful with it just so people have the context for that emphasis.}

This paper studies \emph{mixed-integer bilevel linear} (MIBL) programs of the form
\begin{equation}\label{eq:intro-mibp}
\begin{aligned}
\max_{x,y} \ \ & c^\top x + d^\top y \\ %label{eq:intro-leader-objective} \\
\text{ s.t. } & Ax + By \le b \\% \label{eq:intro-upper-level-constraints} \\
              & y \in \arg\max_y \{ f^\top y : Cx + Dy \le g, y_i \in \Z \text { for } i \in \mathcal I_F\} \\% \label{eq:intro-bilevel}\\
              & x_i \in \Z \text{ for } i \in \mathcal I_L % \label{eq:intro-x-integer}
\end{aligned}
\end{equation}
where $x$ and $y$ are finite-dimensional real decision vectors, $b$, $c$, $d$, $f$ and $g$ are finite-dimensional vectors and the constraint matrices $A$, $B$, $C$, and $D$ have conforming dimensions. The decision-maker who determines $x$ is called the \emph{leader}, while the decision-maker who determines $y$ is called the \emph{follower}. The sets $\mathcal I_L$ and $\mathcal I_F$ are subsets of the index sets of $x$ and $y$ (respectively) that determine which leader and follower decision variables are integers. The follower solves the \emph{lower level problem} 
\begin{equation}\label{eq:intro-lower-level}
\begin{aligned}
\max_{y} \ \ & f^\top y \\ %label{eq:intro-leader-objective} \\
\text{ s.t. } & Dy \le g - Cx \\% \label{eq:intro-upper-level-constraints} \\
              & y_i \in \Z \text{ for } i \in \mathcal I_F % \label{eq:intro-x-integer}
\end{aligned}
\end{equation}
for a given choice of $x$ by the leader.  

Bilevel programming has a long history, with traditions in theoretical economics (see, for instance, \citep{mirrlees1999theory}, which originally appeared in 1975) and operations research (see, for instance, \citep{candler1982linear,jeroslow1985polynomial}). While much of the research community's attention has focused on the continuous case, there is a growing literature on bilevel programs with integer variables, starting with early work in the 1990s by Bard and Moore \citep{moore1990mixed,bard1992algorithm} through a more recent surge of interest \citep{Lozano2017,Wang2017,Fischetti2017,Fischetti2017a,dempe2017solving,yue2017projection,tahernejad2016branch,Lodi2014,schaefer2018solving}. Research has largely focused on algorithmic concerns, with a recent emphasis on leveraging advancements in cutting plane techniques. Typically, these algorithms restrict how variables appear in the problem. For instance, \citet{Wang2017} consider the setting where all variables are integer-valued. \citet{Fischetti2017a} allow for continuous variables but restrict the leader's continuous variables from entering the follower's problem. Only very few papers have studied questions of computational complexity in the mixed-integer setting, and also often with restricting the appearance of integer variables (see, for instance, \citep{Koppe2010}). 

To our knowledge, a thorough study of general MIBL programs with no additional restrictions on the variables and constraints has not been undertaken in the literature. The contribution of this paper is to ask and answer a simple question: what types of sets can be modeled as feasible regions (or possibly projections of feasible regions) of such general MIBL programs? Or put in the standard terminology of the optimization literature: what sets are \emph{MIBL-representable}? 

Separate from the design of algorithms and questions of computational complexity, studying representability shows the reach of a modeling framework. The classical paper of \citet{Jeroslow1984} provides a characterization of sets that can be represented by mixed-integer linear feasible regions. They show that a set is the projection of the feasible region of a mixed-integer linear problem (termed \emph{MILP-representable}) if and only if it is the Minkowski sum of a finite union of polytopes and a finitely-generated integer monoid (concepts more carefully defined below). This result is the gold standard in the theory of representability, as it answers a long-standing question on the limits of mixed-integer programming as a modeling framework. Jeroslow and Lowe's result also serves as inspiration for recent interest in the representability of a variety of problems. See \citet{vielma2015mixed} for a review of the literature until 2015 and \citep{lubin2017mixed,lubin2017regularity,Basu2016,lubin2016extended,del2017ellipsoidal,del2017characterizations} for examples of more recent work. %\amitabh{Is the Dadush paper about representability?}

To our knowledge, questions of representability have not even been explicitly asked of  \emph{continuous bilevel linear} (CBL) programs where $\mathcal I_L = \mathcal I_F = \emptyset$ in \eqref{eq:intro-mibp}. Accordingly, our initial focus concerns characterizations of CBL-representable sets. In the first key result of our paper (\cref{thm:RepEquiv}), we show that every CBL-representable set can also be modeled as the feasible region of a linear complementarity (LC) problem (in the sense of \citet{Cottle2009}). Indeed, we show that both CBL-representable sets and LC-representable sets are precisely finite unions of polyhedra. Our proof method works through a connection to superlevel sets of piecewise linear convex functions (what we term \emph{polyhedral reverse-convex sets}) that alternately characterize finite unions of polyhedra. In other words, an arbitrary finite union of polyhedra can be modeled as a continuous bilevel program, a linear complementarity problem, or an optimization problem over a polyhedral reverse-convex set.

A natural question arises: how can one relate CBL-representability and MILP-representability? Despite some connections between CBL programs and MILPs (see, for instance, \citet{audet1997links}), the collection of sets they represent are incomparable (see \cref{thm:CBPnotMIP} below). The Jeroslow-Lowe characterization of MILP-representability as the finite union of polytopes summed with a finitely-generated monoid has a fundamentally different geometry than CBL-representability as a finite union of polyhedra. It is thus natural to conjecture that MIBL-representability should involve some combination of the two geometries. We will see that this intuition is \emph{roughly} correct, with an important caveat.

% \sri{Rewrote the below paragraph}\\

A distressing fact about MIBL programs, noticed early on in \citet{moore1990mixed}, is that the feasible region of a MIBL program may not be \emph{topologically closed} (maybe the simplest example illustrating this fact is Example~1.1 of \citet{Koppe2010}). This throws a wrench in the classical narrative of representability that has largely focused on closed sets. Indeed, the recent work of \citet{lubin2017mixed} is careful to study representability by \emph{closed} convex sets. This focus is entirely justified. Closed sets are indeed of most interest to the working optimizer and modeler, since sets that are not closed may fail to have desirable optimality properties (such as nonexistence of optimal solutions). Accordingly, we aim our investigation on {\em closures} of MIBL-representable sets. In fact, we provide a complete characterization of these sets as unions of finitely many MILP-representable sets (\cref{thm:RepMIBL}). This is our second key result on MIBL-representability. The result conforms to the rough intuition of the last paragraph. MIBL-representable sets are indeed finite unions of other objects, but instead of these objects being polyhedra as in the case of CBL-programs, we now take unions of MILP-representable sets, reflecting the inherent integrality of MIBL programs.

To prove this second key result on MIBL-representability we develop a generalization of Jeroslow and Lowe's theory to mixed integer sets in \emph{generalized polyhedra}, which are finite intersections of closed {\em and} open halfspaces. Indeed, it is the non-closed nature of generalized polyhedra that allows us to study the non-closed feasible regions of MIBL-programs. Specifically, these tools arise when we take the value function approach to bilevel programming, as previously studied in \citep{Lozano2017,dempe2017solving,ye2010new,schaefer2018solving}. Here, we leverage the characterization of \citet{blair1995closed} of the value function of the mixed-integer program in the lower level problem \eqref{eq:intro-lower-level}. Blair's characterization leads us to analyze superlevel and sublevel sets of \emph{Chv\'atal functions}. A Chv\'atal function is (roughly speaking) a linear function with integer rounding (a more formal definition later). \citet{Basu2016} show that superlevel sets of Chv\'atal functions are MILP-representable. Sublevel sets are trickier, but for a familiar reason --- they are, in general, not closed. This is not an accident. The non-closed nature of mixed-integer bilevel sets, generalized polyhedra, and sublevel sets of Chv\'atal functions are all tied together in a key technical result that shows that sublevel sets of Chv\'atal functions are precisely finite unions of generalized mixed-integer linear representable (GMILP-representable) sets (\cref{thm:ChvGomJer}). This result is the key to establishing our second main result on MIBL-representability.

In fact, showing that the sublevel set of a Chv\'atal function is the finite union of GMILP-representable sets is a corollary of a more general result. Namely, we show that the collection of sets that are finite unions of GMILP-representable sets forms an algebra (closed under unions, intersections, and complements). We believe this result is of independent interest. 
%In order to prove this result, we generalize the earlier work of \citet{Jeroslow1984} to generalized mixed integer linear sets and leverage some nontrivial facts about monoids from \citet{jeroslow1978some}. Details are in \cref{sub:proof_of_thm_repmibl}.

The representability results in the mixed-integer case require rationality assumptions on the data. This is an inevitable consequence when dealing with mixed-integer sets. For example, even the classical result of \citet{Jeroslow1984} requires rationality assumptions on the data. Without this assumption the result does not even hold. The key issue is that the convex hull of mixed integer sets are not necessarily polyhedral unless certain other assumptions are made, amongst which the rationality assumption is most common (see \citet{dey2013some} for a discussion of these issues). 

Of course, it is natural to ask if this understanding of MIBL-representability has implications for questions of computational complexity. The interplay between representability and complexity is subtle. We show (in \cref{thm:RepMIBL}) that allowing integer variables in the leader's decision $x$ captures everything in terms of representability as when allowing integer variables in both the leader and follower's decision (up to taking closures). However, we show (in \cref{thm:UpIntisNP}) that the former is in $\mathcal{NP}$ while the latter is $\Sigma_p^2$-complete \citep{Lodi2014}. This underscores a crucial difference between an integer variable in the upper level versus an integer variable in the lower level, from a computational complexity standpoint.

In summary, we make the following contributions. We provide geometric characterizations of CBL-representability and MIBL-representability (where the latter is up to closures) in terms of finite unions of polyhedra and finite unions of MILP-representable sets, respectively. 
%{\color{red} Remove this sentence: In both cases, there is an interesting interplay with standard MILP-representability --- CBL-representable sets may not be MILP-representable (and vice versa) whereas MIBL-representable sets are connected to finite unions of MILP-representable sets.} 
In the process of establishing these main results, we also develop a theory of representability of mixed-integer sets in generalized polyhedra and show that finite unions of GMILP-representable sets form an algebra. This last result has the implication that finite unions of MILP-representable sets also form an algebra, up to closures.

The rest of the paper is organized as follows. The main definitions needed to state our main results are found in \cref{s:notation}, followed in \cref{s:key-results} by self-contained statements of these main results. \cref{sub:proof_of_theorem_thm:repequiv} contains our analysis of continuous bilevel sets and their representability. \cref{sub:proof_of_thm_repmibl} explores representability in the mixed-integer setting. \cref{sec:conclusion} concludes.

\subsection*{Some notation}\label{sub:some_notation}

The following basic concepts are entirely standard, but their notation less so. We state our notation for clarity and completeness. Let $\R$, $\Q$, and $\Z$ denote the set of real numbers, rational numbers, and integers, respectively. Given a subset $S$ of $\R^n$ for some integer $n$, the interior and closure of $S$ will be denoted by $\inte(S)$ and $\cl(S)$, respectively. The set of all conic combinations of the elements of $S$ is called the cone of $S$ and denoted $\cone(S)$. The set of all conic combinations with integer multipliers is called the integer cone of $S$ and denoted $\inte\cone(S)$. The affine hull $\aff(S)$ of $S$ is the intersection of all affine sets containing $S$. The relative interior $\relint(S)$ of $S$ is the interior of $S$ in the relative topology of $\aff(S)$. Finally, the ball $B(c,r) = \left\{ x \in \R^n : ||x - c || \le r \right\}$ in the closed ball of radius $r$ centered at $c$. We use $\proj_x \left \lbrace (x,y):(x,y)\in S\right \rbrace$ to denote the projection of the set $S$ on to the space of $x$ variables. For any set $A\subseteq\R^n$, the complement will be denoted by $A^c := \{x\in\R^n: x\not\in A\}$. For any convex set $K\subseteq \R^n$, the recession cone of a convex set $K$ will be denoted by $\rec(K)$. %We call $x\in\R^n$ as a recession direction of $K$ if $\forall \lambda\geq 0$, $v \in K\implies v+\lambda x \in K$. The set of all recession directions of a convex set can be proved to be a cone and is called the {\em recession cone} of the convex set. 
We say that a cone $C$ is a {\em pointed} cone if $x\in C \implies -x\not\in C$. %$r\in\R^n$ is called an extreme ray of $C$ if $r^1,\,r^2 \in C$ and $r = r^1 + r^2 \implies$ there exist $\lambda_1,\,\lambda_2 > 0$ such that $r = \frac{r^1}{\lambda_1} = \frac{r^2}{\lambda_2}$. 
A cone $C$ is said to be a {\em simplicial }cone if the extreme rays of $C$ are linearly independent.

\section{Key definitions}\label{s:notation}

This section provides the definitions needed to understand the statements of our main results collected in \cref{s:key-results}. Concepts that appear only in the proofs of these results are defined later as needed. 

We begin with formal definitions of the types of sets we study in this paper.

\begin{Def}[Mixed-integer bilevel linear set]\label{Def:MIBL}
A set $S\subseteq \R^{n_\ell+n_f}$ is called a {\em mixed-integer bilevel linear} (MIBL) set if there exist $A\in \R^{m_\ell \times n_\ell},\, B\in\R^{m_\ell \times n_f},\, b\in\R^{m_\ell},\,f\in \R^{n_f},\,D\in\R^{m_f\times n_f},\,C\in\R^{m_f\times n_\ell}$ and $g\in \R^{m_f}$ such that  
	\begin{align}
	S \quad&=\quad S^1\cap S^2\cap S^3, \notag\\
	S^1\quad&=\quad \left\{ (x,\,y)\in\R^{n_\ell+n_f} :\, Ax+By\leq b  \right\}, \notag \\
	S^2\quad&=\quad \left\{ (x,\,y)\in\R^{n_\ell+n_f} :\, y \in \arg\max_y\left\{ f^\top y: Dy \leq g - Cx,\,y_i \in \Z \text{ for }i \in \mathcal I_F \right\}  \right\}, \text{ and } \label{eq:BLP:Bilevel}		\\
	S^3\quad&=\quad \left\{ (x,\,y)\in\R^{n_\ell+n_f} :\, x_i \in \Z \text{ for }i \in \mathcal I_L\right\}, \notag
\end{align}
where $[k]:= \left\{ 1,\,2,\,3,\,\ldots,\,k \right\},\, \mathcal I_L \subseteq [n_\ell],\, \mathcal I_F \subseteq [n_f] $. Further, we call $S$ a 
\begin{itemize}
\item {\em continuous bilevel linear} (CBL) set if it has a representation with $|\mathcal I_L| = |\mathcal I_F| = 0$;
\item {\em bilevel linear with integer upper level} (BLP-UI) set if it has a representation with $|\mathcal I_F| = 0$. 
% (if $|\mathcal I_L| = n_\ell$, then it is a {\em pure-BLP-UI} set);
% \item { bilevel linear polyhedral set with integer {\em lower} level (BLP-LI)} if there exists such a representation with $|\mathcal I_L| = 0$ (if $|\mathcal I_F| = n_f$, then it is a {\em pure-BLP-LI} set); and, 
% \item { bilevel linear {\em pure-integer} set (pure-BLI)} if there exists such a representation with $|\mathcal I_L| = n_\ell$ and $|\mathcal I_F| = n_f$.
\end{itemize}
Such sets will be labeled {\em rational} if all the entries in $A,B,C,D, b, f, g$ are rational.
\end{Def}

\begin{Def}[Linear complementarity sets]
A set $S\subseteq \R^n$ is a {\em linear complementarity (LC) set}, if there exist $M\in \R^{n\times n},\,q\in\R^n$ and $A,b$ of appropriate dimensions such that 
\begin{align*}
	S \quad&=\quad \left\{ x\in \R^n:\,x\geq0,\,Mx+q\geq 0,\,x^\top (Mx+q) =0 ,\, Ax\leq b\right\}.
\end{align*}
Sometimes, we represent this using the alternative notation
\begin{align*}
    0\quad\leq\quad x\quad&\perp\quad Mx+q\quad\geq\quad 0\\
    Ax \quad&\leq\quad b.
\end{align*}
A linear complementarity set will be labeled {\em rational} if all the entries in $A,M, b, q$ are rational.
\end{Def}

	As an example, the set of all $n$-dimensional binary vectors is a linear complementarity set. Indeed, they can be modeled as $0\leq x_i\perp(1-x_i)\geq 0$ for $i\in [n]$. 

 % \end{remark}

%We now define a ``polyhedral'' convex function that would aid us in defining polyhedral reverse-convex sets, another key character in our narrative. 
\begin{Def}[Polyhedral convex function]
	A function $f:\mathbb{R}^n\mapsto \mathbb{R}$ is a {\em polyhedral convex function with $k$ pieces} if there exist $\alpha^1,\ldots,\alpha^k \in \mathbb{R}^n$ and $\beta_1,\ldots,\beta_k \in \mathbb{R}$ such that 
	\begin{equation*}
		f(x) \quad=\quad \max_{j=1}^k \left\{ \left \langle \alpha^j,\,x \right \rangle - \beta_j \right\}.
	\end{equation*}
A polyhedral convex function will be labeled {\em rational} if all the entries in the affine functions are rational.
\end{Def}
 % \begin{remark}	
	Note that $f$ is a maximum of finitely many affine functions. Hence $f$ is always a convex function. 

 % \end{remark}

\begin{Def}[Polyhedral reverse-convex set]
A set $S\in \R^n$ is a {\em polyhedral reverse-convex (PRC) set} if there exist $n'\in \mathbb{Z}_+,\, A\in \R^{m\times n},\,b\in \R^m $ and polyhedral convex functions $f_i$ for $i\in [n']$ such that
\begin{align*}
	S\quad&=\quad \left\{ x\in\R^n:\,Ax \leq b,\, f_i(x) \geq 0\text{ for }i\in[n'] \right\}.
\end{align*}
A polyhedral reverse-convex set will be labeled {\em rational} if all the entries in $A,b$ are rational, and the polyhedral convex functions $f_i$ are all rational.
\end{Def}
One of the distinguishing features of MIBL sets is their potential for not being closed. This was discussed in the introduction and a concrete example is provided in \cref{lem:StrictIncl}. To explore the possibility of non-closedness we introduce the notion of generalized polyhedra. 

%\sri{I agree with mentioning rational data. Even Jeroslow-Lowe theorem holds only for {\em rational} MIPs. I however think, it should just be mentioned in the notations section or somewhere that every matrix/vector/scalar that we consider is rational. Miles Lubin/Juan Pablo Vielma have some generalization of Jeroslow Lowe that if you no longer restrict to rational MIPs then MIP representability is much vaster. We do not even attempt to get there. }

\begin{Def}[Generalized, regular, and relatively open polyhedra]
    A {\em generalized polyhedron} is a finite intersection of open and closed halfspaces. A bounded generalized polyhedron is called a {\em generalized polytope}. The finite intersection of {\em closed} halfspaces is called a {\em regular polyhedron}. A bounded regular polyhedron is a {\em regular polytope}. 
    A {\em relatively open polyhedron} $P$, is a generalized polyhedron such that $P = \relint(P)$.  If relatively open polyhedron $P$ is bounded, we call it a \emph{relatively open polytope}. 

    Such sets will be labeled {\em rational} if all the defining halfspaces (open or closed) can be given using affine functions with rational data.
\end{Def}

Note that the closure of a generalized or relatively open polyhedron is a regular polyhedron. Also, singletons are, by definition, relatively open polyhedra. 

%The notions of ``generalized'' and ``relatively open'' can equally be applied to related concepts.

\begin{Def}[Generalized, regular and relatively open mixed-integer sets]
    A \emph{generalized (respectively, regular and relatively open) mixed-integer set} is the set of mixed-integer points in a generalized (respectively, regular and relatively open) polyhedron. 
    
    Such sets will be labeled {\em rational} if the corresponding generalized polyhedra are rational.
    \end{Def}

Our main focus is to explore how collections of the above objects can be characterized and are related to one another. To facilitate this investigation we employ the following notation and vocabulary. Let $\mathscr T$ be a family of sets. These families will include objects of potentially different dimensions. For instance, the family of polyhedra will include polyhedra in $\R^2$ as well as those in $\R^3$. We will often not make explicit reference to the ambient dimension of a member of the family $\mathscr T$, especially when it is clear from context, unless explicitly needed. {For a family $\mathscr T$, the subfamily of bounded sets in $\mathscr T$ will be denoted by $\bar{\mathscr T}$.} Also, $\cl(\mathscr T)$ is the family of the closures of all sets in $\mathscr T$. When referring to the {\em rational} members of a family (as per definitions above), we will use the notation $\mathscr T(\Q)$.

We are not only interested in the above sets, but also linear transformations of these sets. This notion is captured by the concept of representability.

\begin{Def}[Representability]
Given a family of sets $\mathscr T$, $S$ is called a {\em $\mathscr T$-representable set} or {\em representable by $\mathscr T$} if there exists a $T\in \mathscr T$ and a linear transform $L$ such that $S=L(T)$. The collection of all such $\mathscr T$-representable sets is denoted $\mathscr T_R$. We use the notation $\mathscr T_R(\Q)$ to denote the images of the {\em rational} sets in $\mathscr T$ under {\em rational} linear transforms, i.e., those linear transforms that can be represented using rational matrices.\footnote{We will never need to refer to general linear transforms of rational sets, or rational linear transforms of general sets in a family $\mathscr T$; so we do not introduce any notation for these contingencies.}
\end{Def}

\begin{remark}\label{rem:1} 
    The standard definition of representability in the optimization literature uses projections as opposed to general linear transforms. However, under mild assumption on the family $\mathscr T$, it can be shown that $\mathscr T_R$ is simply the collection of sets that are {\em projections} of sets in $\mathscr T$. Since projections are linear transforms, we certainly get all projections in $\mathscr T_R$. Now consider a set $S \in \mathscr T_R$, i.e., there exists a set $T \in \mathscr T$  and a linear transform $L$ such that $S = L(T)$. Observe that $S = \proj_x\{(x,y) : x = L(y), y \in T\}$. Thus, if $\mathscr T$ is a family that is closed under the addition of affine subspaces (like $x = L(y)$ above), and addition of free variables (like the set $\{(x,y): y \in T\}$), then $\mathscr T_R$ does not contain anything beyond projections of sets in $\mathscr T$. All families considered in this paper are easily verified to satisfy these conditions.
\end{remark}

One can immediately observe that $\mathscr T\subseteq\mathscr T_R$ since the linear transform can be chosen as the identity transform. However, the inclusion may or may not be strict. For example, it is well known that if $\mathscr T$ is the set of all polyhedra, then $\mathscr T_R = \mathscr T$. However, if $\mathscr T$ is the family of all (regular) mixed-integer sets then $\mathscr T\subsetneq \mathscr T_R$. 

When referencing specific families of sets we use the following notation. The family of all linear complementarity sets is $\mathscr T^{LC}$, continuous bilevel sets is $\mathscr T^{CBL}$ and the family of all polyhedral reverse-convex sets is $\mathscr T^{PRC}$. The family of mixed-integer sets is $\MILP$. The family of MIBL sets is $\mathscr T^{MIBL}$. The family of BLP-UI sets is $\mathscr T^{BLP-UI}$. We use $\mathscr P$ to denote the family of {\em finite} unions of polyhedra and ${\DMILP}$ to denote the family of finite unions of sets in $\MILP$. %Since linear transforms and unions commute, it is not hard to see that ${\DMILPR}$ is the same as the family of sets that can be written as finite unions of sets in $\MILPR$. One direction is easy, the other direction is proved explicitly in \cref{lem:DMIPcharac}. 
We use ${\GMILP}$ to denote the family of generalized mixed-integer sets and ${\GDMILP}$ to denote the family of sets that can be written as finite unions of sets in ${\GMILP}$. %As before, it can be shown that ${\GDMILPR}$ is the same as the family of sets that can be written as finite unions of sets in ${\GMILPR}$. 
The family of all integer cones is denoted by $\mathscr T^{IC}$. This notation is summarized in \cref{table:set-names}. %\amitabh{I propose to put the ``hat" only on $MI$ and $D-MI$, so that any family looks like $\mathscr T^A$, where $A$ is a symbol that describes the family. Then when we use $\mathscr T^A_R$, it is a little bit more clear given our notation as opposed to $\hat{\mathscr T^A_R}$. }\sri{I like this idea and have made this change throughout. But I see that the hat looks quite small now. However I have now defined macros for all the 8 sets i.e., with or without 'Disjunctive, with or without 'Generalized' with or without 'Representable'. We can just change the macro there and use that notation. Should have worked with the macros from before, probably.}

We make a useful observation at this point.
%\sri{\cref{lem:DMIRisFiniteUnion} was originally here. I moved it after the key results, so that this section has just the defintions.}

\begin{lemma}\label{lem:DMIRisFiniteUnion}
The family ${\DMILPR}$ is exactly the family of finite unions of MILP-representable sets, i.e., finite unions of sets in $\MILPR$. Similarly, ${\GDMILPR}$ is exactly the family of finite unions of sets in ${\GMILPR}$. The statements also holds for the rational elements, i.e., when we consider ${\DMILPR}(\Q)$, ${\MILPR}(\Q)$, ${\GDMILPR}(\Q)$ and ${\GMILPR}(\Q)$, respectively.
\end{lemma}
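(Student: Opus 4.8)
The plan is to show mutual inclusion between $\DMILPR$ (the family of linear-transform images of finite unions of mixed-integer sets) and the family of finite unions of $\MILPR$-sets (linear-transform images of single mixed-integer sets). The key technical fact I need is that linear transforms distribute over finite unions, i.e. $L\left(\bigcup_{i=1}^k T_i\right) = \bigcup_{i=1}^k L(T_i)$, which is elementary and holds for any map. This, together with the definition of $\DMILP$ as the family of finite unions of $\MILP$-sets, does essentially all the work.

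**First I would prove the inclusion $\DMILPR \subseteq \{\text{finite unions of } \MILPR\text{-sets}\}$.** Take $S \in \DMILPR$. By definition there is a set $T \in \DMILP$ and a linear transform $L$ with $S = L(T)$. Since $T \in \DMILP$, write $T = \bigcup_{i=1}^k T_i$ with each $T_i \in \MILP$. Then $S = L(T) = \bigcup_{i=1}^k L(T_i)$, and each $L(T_i) \in \MILPR$ by definition of $\MILPR$. Hence $S$ is a finite union of $\MILPR$-sets. For the reverse inclusion, suppose $S = \bigcup_{i=1}^k S_i$ where each $S_i \in \MILPR$, say $S_i = L_i(T_i)$ with $T_i \in \MILP$. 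The subtlety here is that the $L_i$ and $T_i$ may live in different ambient spaces, so I cannot immediately write $S$ as a single linear image of $\bigcup_i T_i$. The standard fix is to embed: place the $T_i$ into disjoint copies of a common high-dimensional space (padding with free/zero coordinates), so that $T' := \bigsqcup_i T_i$ (suitably embedded) is still a member of $\MILP$ when each embedding of a mixed-integer set in a polyhedron is again a mixed-integer set in a polyhedron — which is immediate since intersecting with coordinate hyperplanes and adding free integer/continuous coordinates preserves membership in $\MILP$. Then define a single linear transform $L$ that agrees with $L_i$ on the $i$-th block; $L(\bigcup_i T'_i) = \bigcup_i L_i(T_i) = S$, so $S \in \DMILPR$. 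Actually, the cleanest route, and the one I would write up, is to invoke \cref{rem:1}: since $\MILP$ and $\DMILP$ are closed under adding affine subspaces and free variables, $\DMILPR$ consists precisely of \emph{projections} of $\DMILP$-sets, and projections manifestly distribute over unions, so the block-embedding argument goes through transparently with $L$ a coordinate projection.

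**The argument for $\GDMILPR$ versus finite unions of $\GMILPR$ is verbatim the same**, replacing "polyhedron" with "generalized polyhedron" throughout; the only thing to check is that the closure properties needed in \cref{rem:1} (adding affine subspaces, adding free variables) also hold for generalized mixed-integer sets, which they do since affine subspaces are generalized polyhedra and free coordinates preserve the generalized-polyhedron structure. Finally, for the rational statements, one observes that every step above preserves rationality: finite unions of rational mixed-integer (resp. generalized mixed-integer) sets are rational $\DMILP$ (resp. $\GDMILP$) sets by definition, the embeddings use rational (indeed $0/1$) coordinate maps, and rational linear transforms of rational sets are exactly what $\MILPR(\Q)$, $\DMILPR(\Q)$, etc., are defined to collect; so the same two inclusions hold with $\Q$ decorations inserted uniformly.

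**The main obstacle**, such as it is, is purely bookkeeping: making precise the embedding of finitely many $\MILP$-sets of different dimensions into a common space so that their union is still a single $\MILP$-set, and exhibiting the single linear transform. There is no real mathematical content — the lemma is essentially an unwinding of the definition of $\DMILP$ as "finite unions of $\MILP$" combined with the distributivity of linear maps over unions — so the write-up should be short, and most of the care goes into citing \cref{rem:1} correctly so that one may take $L$ to be a projection and sidestep any awkwardness about composing unions with general linear maps.
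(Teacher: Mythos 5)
Your proposal is correct and follows essentially the same route as the paper: the easy direction is the distributivity of linear maps over unions, and the reverse direction embeds the $T_i$ into a common product space (encoding each $L_i$ via an affine constraint, exactly the device licensed by \cref{rem:1}) so that the union becomes a single set in $\DMILP$ hit by one projection; the generalized and rational cases are handled by the same observation that nothing in the construction changes. One small wording slip: the disjoint union $\bigsqcup_i T_i$ after embedding is a member of $\DMILP$, not of $\MILP$ as written (each embedded piece is in $\MILP$, their union generally is not), but this is exactly what the argument needs, so the proof is unaffected.
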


\begin{proof} Consider any $T \in \DMILPR$. By definition, there exist sets $T_1, \ldots, T_k \in \MILP$ and a linear transformation $L$ such that $T = L\left(\bigcup_{i=1}^k T_i\right) = \bigcup_{i=1}^k L(T_i)$ and the result follows from the definition of MILP-representable sets. 

Now consider $T\subseteq \R^n$ such that $T = \bigcup_{i=1}^k T^i$ where $T_i\in\MILPR$ for $i\in[k]$. By definition of $T_i\in\MILPR$, we can write $\R^n\supseteq T_i = \left\{ L_i(z^i): z^i\in T_i' \right\}$ where $T_i'\subseteq \R^{n_i}\times \R^{n_i'}$ is a set of mixed integer points in a polyhedron and $L_i: \R^{n_i}\times \R^{n'_i} \mapsto \R^n$ is a linear transform. In other words, $T_i' = \left\{ z^i = (x^i,\,y^i)\in \R^{n_i}\times\Z^{n_i'}: A^ix^i + B^iy^i \leq b^i \right\}$. Let us define
\begin{equation}\label{eq:define-t-i}
\tilde T_i = \left\{ (x,\,x^1,\,\ldots,\,x^k,\,y^1,\,\ldots,\,y^k) : \forall\, j\in[k],\, x^j\in\R^{n_j},\,y^j\in \R^{n_j'};\, x=L_i(x^i),\, A^ix^i + B^iy^i \leq b^i \right\}.
\end{equation}
Clearly, for all $i\in[k]$, $\tilde T_i \subseteq \R^{n+\sum_{j=1}^k \left( n_j+n_j' \right) }$ is a polyhedron and projecting $\tilde T_i\cap \left( \R^{n+\sum_{j=1}^k  n_j }\times\Z^{\sum_{j=1}^k  n_j'} \right) $ over the first $n$ variables gives $T_i$. Let us denote the projection from $(x, x^1, \ldots, x^k, y^1, \ldots, y^k)$ onto the first $n$ variables by $L$. Since any linear operator commutes with finite unions, we can write, 
  \begin{equation*}
    T\quad=\quad\left\{ L(z) : z = (x, x^1, \ldots, x^k, y^1, \ldots, y^k)\in \left( \bigcup_{i=1}^k\tilde T_i  \right) \bigcap \left( \R^{n'}\times\Z^{n''} \right)  \right\},
  \end{equation*}
    %\chris{So what is $L$ precisely? What does it mean for a linear operator to get ``distributed over unions''?}\sri{We define representability in terms of linear transforms. $L$ can be seen a a matrix or just a linear operator, that gives the ``representability''. By distributing, I mean $L(x+y) = Lx + L(y)$ for linear operator $L$}
    where $n' = n+\sum_{j=1}^k  n_j $ and $n'' = \sum_{j=1}^k  n_j' $, proving the first part of the lemma.
    
    The second part about ${\GDMILPR}$ follows along very similar lines and is not repeated here. The rational version also follows easily by observing that nothing changes in the above proof if the linear transforms and all entries in the data are constrained to be rational.
\end{proof}

\begin{remark} Due to \cref{lem:DMIRisFiniteUnion}, we will interchangeably use the notation ${\DMILPR}$ and the phrase ``finite unions of MILP-representable sets" (similarly, ${\GDMILPR}$ and ``finite unions of sets in ${\GMILPR}$") without further comment in the remainder of this paper.\end{remark}

\begin{table}
\begin{center}
{\renewcommand{\arraystretch}{1.3}
\begin{tabular}{|c|l|}
\hline
Notation & \qquad \qquad \qquad \qquad \qquad Family  \\
\hline
% $\hat{\mathscr T}$ & sets whose closures are in family $\mathscr T$  \\
% \hline
$\bar{\mathscr T}$ & bounded sets from the family $\mathscr T$  \\
\hline
$\cl(\mathscr T)$ & the closures of sets in the family $\mathscr T$  \\
\hline
$\mathscr T(\Q)$ & the sets in family $\mathscr T$ determined with rational data  \\
\hline
$\mathscr T_R$ & $\mathscr T$-representable sets  \\
\hline
$\MILP$ & sets that are mixed-integer points in a polyhedron\\
\hline
${\GMILP}$ & sets that are mixed-integer points in a generalized polyhedron\\
\hline
% $\mathscr T^D$ & finite unions of sets in $\mathscr T$  \\
% \hline
$\mathscr T^{CBL}$ & continuous bilevel linear (CBL) sets  \\
\hline
$\mathscr T^{LC}$ & linear complementarity (LC) sets \\
\hline
$\mathscr T^{BLP-UI}$ & bilevel linear polyhedral sets with integer upper level (BLP-UI) \\
\hline
$\DMILP$ & sets that can be written as finite unions of sets in $\MILP$ \\
 \hline
$\DMILPR$ & sets that can be written as finite unions of sets in $\MILPR$ (see \cref{lem:DMIRisFiniteUnion}) \\
\hline
$\mathscr T^{MIBL}$ & Mixed-integer bilevel sets\\
\hline
${\GDMILP}$ & sets that can be written as finite unions of sets in ${\GMILP}$ \\
\hline
${\GDMILPR}$ & sets that can be written as finite unions of sets in ${\GMILPR}$ (see \cref{lem:DMIRisFiniteUnion}) \\
 \hline
$\mathscr T^{IC}$ & integer cones \\
\hline
$\mathscr P$ & finite unions of polyhedra \\
\hline
\end{tabular}}
\end{center}
\caption{Families of sets under consideration. }\label{table:set-names}
\end{table}

\vskip 5pt
Finally, we introduce concepts that are used in describing characterizations of these families of sets. The key concept used to articulate the ``integrality'' inherent in many of these families is the following.

\begin{Def}[Monoid]
  A set $C\subseteq\R^n$ is a {\em monoid} if for all $x, y\in C$, $x+y\in C$. A monoid is \emph{finitely generated} if there exist $r^1,\,r^2,\,\ldots,\,r^k\in\R^n$ such that 
  \begin{equation*}
    C\quad=\quad \left\{ x: x = \sum_{i=1}^k \lambda_ir^i\text{ where }\lambda_i \in \Z_+;\, \forall\, i\in[k] \right\}.
  \end{equation*}
We will often denote the right-hand side of the above as $\inte\cone \left\{ r^1,\ldots,r^k \right\}$. 
%The notation is motivated by the term ``integer cone'' used in \citet{Conforti2014}. 
Further, we say that $C$ is a {\em pointed monoid}, if $\cone(C)$ is a  pointed cone. A finitely generated monoid is called {\em rational} if the generators $r^1, \ldots, r^k$ are all rational vectors.
\end{Def}
In this paper, we are interested in discrete monoids. A set $S$ is discrete if there exists an $\eps>0$ such that for all $x\in S$, $B(x,\,\eps)\cap S = \{x\}$. %, where $B(x,\,y)$ is the ball around $x$ with radius $y$.  
Not all discrete monoids are finitely generated. 
%\chris{Do we need this fact? If not, I might remove this.}\sri{Added this example for two purposes. One - why we need Jeroslow's theorem that says we get a finite union of {\em finitely generated} monoids (later) and another to highlight the fact we are indeed not working with a general monoid, but a strict subset of them, even if we restrict to discrete ones.}
For example, the set $M = \left \lbrace (0,\,0)\right \rbrace\cup \left \lbrace x\in\Z^2: x_1\geq 1,\, x_2 \geq 1\right \rbrace $ is a discrete monoid that is not finitely generated.

The seminal result from \citet{Jeroslow1984}, which we restate in \cref{thm:JeroslowLowe}, shows that a rational MILP-representable set is the Minkowski sum of a finite union of rational polytopes and a rational finitely generated monoid. %We call such sums \emph{polyhedral monoids}. \chris{Note that I think this terminology is nonstandard. Look at the Jeroslow's monoid paper. He defines a polyhedral monoid differently (basically as like a monoid that has an $H$-description). We should either consider changing the name, or possibly adding a warning that our definition is different than what may be more standard.} For describing the family of generalized mixed integer sets, the following ``generalized'' versions of polyhedral monoids prove useful. \sri{Agreed. How about we use polyhedro-monoid? Any other name suggestion?}
%
%\begin{Def}[Generalized, regular and relatively open polyhedral monoid]
%    The set $Q$ is a generalized (respectively, regular and relatively open) polyhedral monoid if $Q = P+M$ where $P$ is a {\em finite union} of generalized (respectively, regular and relatively open) polytopes and $M$ is a monoid generated by finitely many rational vectors.\end{Def}
%
Finally, we define three families of functions that provide an alternative vocabulary for describing ``integrality''; namely, Chv\'atal functions, Gomory functions and Jeroslow functions. These families derive significance here from their ability to articulate value functions of integer and mixed-integer programs (as seen in \citet{Blair1977,Blair1979,Blair1982,blair1995closed}). 

{\em Chv\'atal functions} are defined recursively by using linear combinations and floor ($\floor{\cdot}$) operators on other Chv\'atal functions, assuming that the set of affine linear functions are Chv\'atal functions. We formalize this using a binary tree construction as below. We adapt the definition from \citet{Basu2016}.\footnote{The definition in~\citet{Basu2016} used $\ceil{\cdot}$ as opposed to $\floor{\cdot}$. We make this change in this paper to be consistent with Jeroslow and Blair's notation. Also, what is referred to as ``order" of the Chv\'atal function's representation is called ``ceiling count" in~\citet{Basu2016}.}
\begin{Def}[Chv\'atal functions~\citet{Basu2016}]\label{Def:ChvFun}
	% The set of {\em Chv\'atal functions} is the smallest set of functions such that the following hold.
	% \begin{enumerate}
	% 	\item If $f:\R^n\mapsto\R$ is affine, then $f$ is a Chv\'atal function.
	% 	\item If $f_1,\,f_2:\R^n\mapsto\R$ are Chv\'atal functions, then $\alpha f_1 + \beta f_2$ is a Chv\'atal function for $\alpha,\,\beta \geq 0$.
	% 	\item If $f:\R^n\mapsto\R$ is a Chv\'atal function, then $\floor{f}(x):= \floor{f(x)}$ is a Chv\'atal function.
	% \end{enumerate}
A Chv\'atal function $\psi:\R^n\mapsto\R$ is constructed as follows. We are given a finite binary tree where each node of the tree is either: (i) a leaf node which corresponds to an affine linear function on $\R^n$ with rational coefficients; (ii) has one child with a corresponding edge labeled by either $\floor{\cdot}$ or a non-negative rational number; or (iii) has two children, each with edges labeled by a non-negative rational number. Start at the root node and recursively form functions corresponding to subtrees rooted at its children using the following rules. 
\begin{enumerate}
	\item If the root has no children then it is a leaf node corresponding to an affine linear function with rational coefficients. Then $\psi$ is the affine linear function. 
	\item If the root has a single child, recursively evaluating a function $g$, and the edge to the child is labeled as $\floor{\cdot}$, then $\psi(x) = \floor{g(x)}$. If the edge is labeled by a non-negative number $\alpha$, define $\psi(x) = \alpha g(x)$. 
	\item Finally, if the root has two children, containing functions $g_1,\,g_2$ and edges connecting them labeled with non-negative rationals, $a_1,\,a_2$, then $\psi(x) = a_1 g_1(x) + a_2g_2(x)$.
\end{enumerate}
We call the number of $\floor{\cdot}$ operations in a binary tree used to represent a Chv\'atal function the {\em order} of this binary tree representation of the Chv\'atal function. Note that a given Chv\'atal function may have alternative binary tree representations with different orders.  %\amitabh{This could be different depending on different binary tree representations of the same Chv\'atal function.}
\end{Def}

\begin{Def}[Gomory functions]\label{Def:ChvGomFun}
	A Gomory function $G$ is the pointwise minimum of finitely many Chv\'atal functions. That is, 
	\begin{equation*}
		G(x)\quad:=\quad \min_{1=1}^k \psi_i(x),
	\end{equation*}
	where $\psi_i$ for $i\in[k]$ are all Chv\'atal functions.
\end{Def}

Gomory functions are then used to build Jeroslow functions, as defined in \citet{blair1995closed}.

\begin{Def}[Jeroslow function]\label{def:JeroslowFunc} 
    Let $G$ be a Gomory function. For any invertible matrix $E$, and any vector $x$, define $\floor{x}_E:= E\floor{E^{-1}x}$. Let $\mathcal{I}$ be a finite index set and let $\{E_i\}_{i\in \mathcal{I}}$ be a set of $n \times n$ invertible rational matrices indexed by $\mathcal{I}$, and $\{w_i\}_{i\in \mathcal{I}}$ be a set of rational vectors in $\R^n$ index by $\mathcal{I}$. Then $J:\R^n\mapsto\R$ is a {\em Jeroslow function} if 
    \begin{equation*}
        J(x) \quad:=\quad \max_{i\in \mathcal{I}} \left\{ G\left (\floor{x}_{E_i}\right ) + w_i^\top \left(x - \floor{x}_{E_i}\right)\right\},
    \end{equation*}
\end{Def}

\begin{remark}
Note that we have explicitly allowed only rational entires in the data defining  Chv\'atal, Gomory and Jeroslow functions. This is also standard in the literature since the natural setting for these functions and their connection to mixed-integer optimization uses rational data.
\end{remark}
\begin{remark}
	Note that it follows from \cref{Def:ChvFun,Def:ChvGomFun,def:JeroslowFunc}, the family of Chv\'atal functions, Gomory functions and Jeroslow function are all closed under composition with affine functions and addition of affine functions.
\end{remark}

A key result in \citet{blair1995closed} is that the value function of a mixed integer program {with rational data} is a Jeroslow function. This result allows us to express the lower-level optimality condition captured in the bilevel constraint \eqref{eq:BLP:Bilevel}. This is a critical observation for our study of MIBL-representability.

We now have all the vocabulary needed to state our main results.

\section{Main results}\label{s:key-results}

Our main results concern the relationship between the sets defined in \cref{table:set-names} and the novel machinery we develop to establish these relationships. 

First, we explore bilevel sets with only continuous variables. We show that the sets represented by continuous bilevel constraints, linear complementarity constraints and polyhedral reverse convex constraints are all equivalent and equal to the family of finite unions of polyhedra.

\begin{theorem}\label{thm:RepEquiv}
The following holds:
\begin{align*}
     \mathscr T_R^{CBL}\quad=\quad \mathscr T_R^{LC}\quad=\quad \mathscr T_R^{PRC} \quad=\quad \mathscr T^{PRC}  \quad=\quad \mathscr P.		
\end{align*}
	% for some $k\in\Z_+$ and $\mathcal{P}_i$ is a polyhedron for $i\in [k]$. \sri{To define them and add size guarantees.}
\end{theorem}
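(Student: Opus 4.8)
The strategy is to establish the chain of equalities by proving a cycle of inclusions, using the family $\mathscr P$ of finite unions of polyhedra as the hub. I would organize the argument as: (1) $\mathscr P \subseteq \mathscr T^{PRC}$; (2) $\mathscr T^{PRC} = \mathscr T_R^{PRC}$; (3) $\mathscr T_R^{PRC} \subseteq \mathscr T_R^{CBL}$; (4) $\mathscr T_R^{CBL} \subseteq \mathscr T_R^{LC}$; (5) $\mathscr T_R^{LC} \subseteq \mathscr P$. Closing this cycle forces all five sets to coincide. Some of these are the ``easy'' directions: for (2), note $\mathscr T^{PRC} \subseteq \mathscr T_R^{PRC}$ always holds (identity transform), and conversely a linear image of a PRC set is again PRC since polyhedral convex functions pull back to polyhedral convex functions and the affine inequalities are preserved — this is essentially the observation in \cref{rem:1} combined with the fact that the PRC family is closed under adding free variables and affine subspaces.

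For (1), the key is to write a single polyhedron $P = \{x : Ax \le b\}$ as a PRC set trivially (take $n' = 0$), and then handle a finite union $P_1 \cup \cdots \cup P_k$. The standard trick: a union of two polyhedra $\{a_i^\top x \le b_i, i\in I_1\} \cup \{a_j^\top x \le b_j, j \in I_2\}$ can be captured by requiring that for each pair $(i,j)\in I_1\times I_2$ at least one of $a_i^\top x - b_i \le 0$ or $a_j^\top x - b_j \le 0$ holds; ``at least one $\le 0$'' is exactly a reverse-convex constraint $\max\{-(a_i^\top x - b_i), -(a_j^\top x - b_j)\} \ge 0$, i.e., a polyhedral convex function being nonnegative. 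Iterating over all pairs (and then over all $k$ polyhedra by induction, or directly over all selections in $I_1\times\cdots\times I_k$) expresses the union as a system of such $\max$-type constraints with no closed affine inequalities needed. For (3), I would use the fact that superlevel sets of a polyhedral convex function are exactly expressible through a lower-level optimization: $f_i(x) = \max_j \langle \alpha_i^j, x\rangle - \beta_i^j \ge 0$ can be encoded by introducing a follower variable $t_i$ and the lower-level problem $\max\{ -t_i : t_i \ge \langle \alpha_i^j, x\rangle - \beta_i^j \ \forall j\}$ (so the follower drives $t_i$ down to $f_i(x)$), then imposing the upper-level constraint $t_i \le 0$; combined with $Ax \le b$ as an upper-level constraint this is a CBL set whose projection onto $x$ is the PRC set. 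For (4), a continuous bilevel constraint $y \in \arg\max\{f^\top y : Dy \le g - Cx\}$ can be replaced by primal feasibility, dual feasibility, and complementary slackness (strong duality / KKT for the LP), which is precisely a linear complementarity system; together with $Ax+By\le b$ this realizes the CBL set (up to introducing dual variables as extra coordinates, which is fine since LC-representability allows linear transforms).

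The main obstacle — and where I expect the real work to lie — is step (5), showing every LC-representable set is a finite union of polyhedra. The feasible region of an LC system $0 \le x \perp Mx+q \ge 0$, $Ax \le b$ decomposes naturally over which coordinates are ``active'' on each side: for each subset $\sigma \subseteq [n]$, the piece where $x_i = 0$ for $i\notin\sigma$ and $(Mx+q)_i = 0$ for $i\in\sigma$ (with the remaining sign inequalities and $Ax\le b$) is a polyhedron, and the LC set is the union of these $2^n$ polyhedra; projecting under a linear transform preserves being a finite union of polyhedra since projection distributes over union and the projection of a polyhedron is a polyhedron. This is conceptually clean but one must be careful that the decomposition genuinely covers all of $S$ (every feasible point lies in at least one such piece because complementarity forces $x_i = 0$ or $(Mx+q)_i = 0$ coordinatewise) and that rationality is preserved throughout; I would also want to double-check the direction $\mathscr T^{PRC}\subseteq\mathscr P$ is not needed separately (it follows from the cycle, since $\mathscr P\subseteq\mathscr T^{PRC}\subseteq\mathscr T_R^{PRC}\subseteq\cdots\subseteq\mathscr P$ collapses everything). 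A minor subtlety worth flagging: one should confirm the CBL and LC constructions in steps (3)–(4) do not secretly require the lower-level LP to be bounded or feasible for all $x$; restricting $x$ via the upper-level constraints, or adding redundant bounding inequalities, handles this.
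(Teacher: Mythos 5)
Your plan follows essentially the same cycle of inclusions as the paper: $\mathscr P \subseteq \mathscr T^{PRC}$ by selecting one constraint from each polyhedron, $\mathscr T_R^{PRC} \subseteq \mathscr T_R^{CBL}$ via follower variables that track the $f_i$, $\mathscr T_R^{CBL} \subseteq \mathscr T_R^{LC}$ via KKT/strong duality, and a closing step back to $\mathscr P$ (the paper routes the LC set back through PRC using $\max\{-x_i,-[Mx+q]_i\}\ge 0$ and then applies the PRC-to-$\mathscr P$ conversion, which enumerates exactly the same $2^n$ activity patterns as your direct decomposition over $\sigma$). Two small corrections: in step (3) the upper-level constraint must be $t_i \ge 0$, not $t_i \le 0$, since the follower drives $t_i$ down to $f_i(x)$ and you want to enforce $f_i(x)\ge 0$; and your justification of $\mathscr T^{PRC} = \mathscr T_R^{PRC}$ by ``pulling back'' polyhedral convex functions is not valid for forward images of a linear map (pullback describes preimages), though as you yourself note this equality follows from the closed cycle, or, as in the paper, from the identity $\mathscr P_R = \mathscr P$.
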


%\chris{Do we have examples of finite unions of polyhedra that are not CBL-sets? Same for LC-sets? I kind of curious why we cannot drop the ``representability'' of CBL and LC in this equation (where we can with PRC sets).}\sri{We do not have any example set with a proof that CBL {\em representability} is strictly required. It could be quite tough to prove that there exists no CBP-set which is that chosen example. But the construction we provide for both  CBL and LC makes use of projections. So we cannot make a stronger claim here. An alternate suggestion that looks more symmetrical is we can remove $\mathscr T^{PRC}$ from the equality chain. But it is slightly weaker claim}

The next result shows the difference between the equivalent families of sets in \cref{thm:RepEquiv} and the family of MILP-representable sets. The characterization of MILP-representable sets by \citet{Jeroslow1984} (restated below as \cref{thm:JeroslowLowe}) is central to the argument here. Using this characterization, we demonstrate explicit examples of sets that illustrate lack of containment in these families.

\begin{corollary}\label{thm:CBPnotMIP}
The following holds:
		\begin{align*}
		\mathscr T_R^{CBL}\setminus \MILPR \quad&\neq\quad \emptyset \text{ and }\\
		\MILPR\setminus \mathscr T_R^{CBL} \quad&\neq\quad \emptyset.
	\end{align*}
\end{corollary}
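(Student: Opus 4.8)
The plan is to exhibit one explicit set in each difference by leveraging the two characterizations available: $\mathscr T_R^{CBL} = \mathscr P$ (the family of finite unions of polyhedra) from \cref{thm:RepEquiv}, and the Jeroslow--Lowe characterization (\cref{thm:JeroslowLowe}) that a rational MILP-representable set is the Minkowski sum of finitely many rational polytopes with a rational finitely generated monoid. The two families have fundamentally different "integrality" features, and I would pick examples that exploit exactly that mismatch.

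For $\mathscr T_R^{CBL}\setminus\MILPR\neq\emptyset$: take a finite union of polyhedra that cannot be written as a Minkowski sum of polytopes and a finitely generated monoid. A natural candidate is the union of two non-parallel lines (or rays) through the origin in $\R^2$, say $S = \{(t,0): t\in\R\}\cup\{(0,t):t\in\R\}$, or more robustly two half-lines forming a "V", e.g. $S=\{(t,0):t\ge 0\}\cup\{(0,t):t\ge 0\}$. This $S$ is clearly in $\mathscr P=\mathscr T_R^{CBL}$. To see it is not MILP-representable, invoke \cref{thm:JeroslowLowe}: if $S = \bigcup_{i} (P_i + M)$ with $P_i$ polytopes and $M$ a finitely generated monoid, then because $S$ is a cone (closed under nonnegative scaling) while each $P_i$ is bounded, the recession directions of $S$ would all have to come from $M$; but $M$ being finitely generated gives $\cone(M)$ finitely generated, and one checks that no such decomposition reproduces precisely the two rays and nothing in between — the convex hull $\conv(S)$ is the full nonnegative quadrant, yet $S$ omits its interior, which is incompatible with a finite union of translated copies of a single finitely generated monoid. (One must be slightly careful; an even cleaner choice is a single unbounded polyhedron whose "integer monoid part" would have to be the set of \emph{all} nonnegative reals in a direction, which is not a discrete finitely generated monoid unless it degenerates — so the line $\{(t,0):t\in\R\}$, whose only candidate monoid is $\R\times\{0\}$, fails because a finitely generated monoid in the sense of the definition, when it contains a full line, cannot combine with bounded polytopes to avoid filling a half-plane; I would formalize via the recession-cone/monoid bookkeeping.)

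For $\MILPR\setminus\mathscr T_R^{CBL}\neq\emptyset$: take a genuinely discrete set that no finite union of polyhedra can equal. The simplest is $S=\Z_+$ (or $\Z$) in $\R$, which is trivially MILP-representable (it is the set of integer points in $\{x\ge 0\}$, and by Jeroslow--Lowe it is $\{0\}+\inte\cone\{1\}$). On the other hand, a finite union of polyhedra in $\R$ is a finite union of intervals, points, and rays; an infinite discrete set like $\Z_+$ is not such a union since it would require infinitely many singleton polyhedra. Hence $\Z_+\notin\mathscr P=\mathscr T_R^{CBL}$.

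The main obstacle is the first containment: verifying non-membership in $\MILPR$ requires a careful argument via \cref{thm:JeroslowLowe} rather than a one-line observation, since MILP-representable sets can themselves be unbounded and non-convex. The key is to isolate a recession/asymptotic incompatibility — a finitely generated monoid summed with bounded polytopes produces unbounded sets whose "far-away" structure is controlled by finitely many directions combined \emph{additively with integer multipliers}, and one must show the chosen $S$ violates this (e.g., $S$ is a cone in continuous directions in a way that forces $M$ to contain a full line, which then forces $\conv(S)$ behavior inconsistent with $S$ itself). I would write this out as a short dedicated lemma-style argument; the second containment is essentially immediate and needs only the remark that elements of $\mathscr P$ in $\R^1$ are finite unions of intervals.
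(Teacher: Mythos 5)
Your proposal is correct and follows essentially the same route as the paper: the paper uses the ``bowtie'' set $\{(x,y) : |y| \le |x|\}$ in place of your two rays/lines, but the contradiction is identical --- \cref{thm:JeroslowLowe} forces the monoid to contain nonzero elements in each of the two unbounded directions, and closure of the monoid under addition then yields a point (of the form $(0,\lambda'')$ in the paper, $(\lambda,\lambda')$ in your example) lying outside the set; for the second difference the paper uses exactly your example $\Z\subseteq\R$ with the same finite-union-of-intervals argument. One caution: your aside that $\conv(S)$ is the full nonnegative quadrant while $S$ omits its interior is not by itself the obstruction (MILP-representable sets routinely omit the interior of their convex hull, e.g.\ $\Z_+^2$), so the formal writeup must rest, as you indicate at the end, solely on the additive closure of the finitely generated monoid.
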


The next result shows that the lack of containment of these families of sets arises because of unboundedness. 

\begin{corollary}\label{thm:BoundedAllSame}
The following holds: 
	\begin{align*}
	\bar{\mathscr T_R^{CBL}} \quad=\quad 	\bar{\mathscr T_R^{LC}} \quad=\quad \bar{\mathscr T_R^{PRC}} \quad=\quad \bar{\mathscr{P}} \quad=\quad \bar{\MILPR}. %\label{eq:RepEquivBound}
	\end{align*}
	% for some $k\in\Z_+$ and $\bar{\mathcal{P}_i}$ is a polytope for $i\in [k]$.
\end{corollary}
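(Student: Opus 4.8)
The plan is to split the five-fold equality into an essentially-free part and one substantive identity. First, by \cref{thm:RepEquiv} the families $\mathscr T_R^{CBL}$, $\mathscr T_R^{LC}$, $\mathscr T_R^{PRC}$ and $\mathscr P$ are literally the same family of sets, so passing to the subfamilies of bounded members gives $\bar{\mathscr T_R^{CBL}} = \bar{\mathscr T_R^{LC}} = \bar{\mathscr T_R^{PRC}} = \bar{\mathscr P}$ with no additional argument. Everything therefore reduces to proving $\bar{\mathscr P} = \bar{\MILPR}$, i.e.\ that the bounded finite unions of polyhedra are exactly the bounded MILP-representable sets.

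For the inclusion $\bar{\mathscr P}\subseteq\bar{\MILPR}$ I would argue directly: a bounded element of $\mathscr P$ is a finite union of polytopes $\bigcup_{i=1}^k P_i$ (discard any empty pieces; the empty set lies in every family). Writing $P_i=\{v:A^iv\le b^i\}$, I would exhibit the standard disjunctive formulation using a copy $v^i$ of $x$ and a binary selector $\lambda_i$ for each $i$, imposing $x=\sum_i v^i$, $\sum_i\lambda_i=1$, $0\le\lambda_i\le 1$, $\lambda_i\in\Z$, and $A^iv^i\le\lambda_i b^i$. The only thing to verify is that $\lambda_i=0$ forces $v^i=0$, which is exactly the statement $\rec(P_i)=\{0\}$ and holds because each $P_i$ is a nonempty bounded polyhedron; projecting onto $x$ then returns $\bigcup_i P_i$. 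Hence this set is MILP-representable, and being bounded it lies in $\bar{\MILPR}$. (This direction uses no rationality.)

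For $\bar{\MILPR}\subseteq\bar{\mathscr P}$ I would invoke the Jeroslow--Lowe characterization (\cref{thm:JeroslowLowe}), which presents a MILP-representable set $S$ as the Minkowski sum $\big(\bigcup_{i=1}^k Q_i\big)+M$ of a finite union of polytopes $Q_i$ and a finitely generated monoid $M=\inte\cone\{r^1,\dots,r^m\}$. If $S\ne\emptyset$, fix $q$ in some $Q_i$; then $q+M\subseteq S$, so boundedness of $S$ forces $M$ to be bounded, and a finitely generated monoid is bounded only when every generator $r^j=0$ (otherwise it contains the unbounded set $\{t\,r^j:t\in\Z_+\}$). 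Thus $M=\{0\}$ and $S=\bigcup_iQ_i$ is a finite union of polytopes. Combining the two inclusions yields $\bar{\mathscr P}=\bar{\MILPR}$ and closes the chain. The step I would flag as needing care is this last one: like \cref{thm:JeroslowLowe} itself, it is carried out under the standing rationality hypothesis that accompanies every mixed-integer representability result in the paper (cf.\ the introduction); without it one would instead have to argue directly — from the recession cone of the defining polyhedron — that a bounded MILP-representable set admits no unbounded integer ``tail'' and is therefore a finite union of (possibly irrational) polytopes. Everything else (the collapse of the four continuous families and the disjunctive construction in the first inclusion) is routine.
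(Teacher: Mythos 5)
Your proof is correct and follows essentially the same route as the paper: the first three equalities are read off from \cref{thm:RepEquiv}, and the substantive identity $\bar{\mathscr P}=\bar{\MILPR}$ is settled via \cref{thm:JeroslowLowe} by observing that boundedness forces the monoid to be $\{0\}$. The only difference is that you make the inclusion $\bar{\mathscr P}\subseteq\bar{\MILPR}$ explicit with a disjunctive formulation (and flag the rationality caveat), whereas the paper implicitly appeals to the converse direction of Jeroslow--Lowe; this is a harmless, slightly more careful elaboration.
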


\begin{remark}\label{rem:rational-version} The rational versions of \cref{thm:RepEquiv}, \cref{thm:CBPnotMIP}, \cref{thm:BoundedAllSame} all hold, i.e., one can replace all the sets in the statements by their rational counterparts. For example, the following version of \cref{thm:RepEquiv} holds:
\begin{align*}
    \mathscr T_R^{CBL}(\Q)\quad=\quad \mathscr T_R^{LC}(\Q)\quad=\quad \mathscr T_R^{PRC}(\Q) \quad=\quad \mathscr T^{PRC}(\Q)  \quad=\quad \mathscr P(\Q).
\end{align*}
We will not explicitly prove the rational versions; the proofs below can be adapted to the rational case without any difficulty.
\end{remark}

Our next set of results concern the representability by bilevel problems with integer variables. We show that, with integrality constraints in the upper level only, bilevel representable sets correspond to finite unions MILP-representable sets. Further allowing integer variables in lower level may yield sets that are not necessarily closed. However, we show that the closure of sets are again finite unions of MILP-representable sets. In contrast to Remark~\ref{rem:rational-version}, rationality of the data is an important assumption in this setting. This is to be expected: this assumption is crucial whenever one deals with mixed-integer points, as mentioned in the introduction.
\begin{theorem}\label{thm:RepMIBL}
The following holds:
\begin{equation*}
   \mathscr T_R^{MIBL}(\Q)\quad\supsetneq\quad\cl\left (\mathscr T_R^{MIBL}(\Q)\right )\quad=\quad\mathscr {T}_R^{BLP-UI}(\Q) \quad=\quad {\DMILPR}(\Q).
\end{equation*}
In fact, in the case of BLP-UI sets, the rationality assumption can be dropped; i.e., 
\begin{equation*}
   \mathscr {T}_R^{BLP-UI} \quad=\quad {\DMILPR}.
\end{equation*}
\end{theorem}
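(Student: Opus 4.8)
The displayed statement bundles four assertions: the non-rational identity $\mathscr T_R^{BLP-UI} = \DMILPR$, its rational analogue, the equalities $\cl(\mathscr T_R^{MIBL}(\Q)) = \mathscr T_R^{BLP-UI}(\Q) = \DMILPR(\Q)$, and the strict inclusion $\mathscr T_R^{MIBL}(\Q) \supsetneq \cl(\mathscr T_R^{MIBL}(\Q))$. The plan is to prove $\mathscr T_R^{BLP-UI} = \DMILPR$ first (the argument will be insensitive to rationality, so it delivers the rational version simultaneously), then sandwich $\cl(\mathscr T_R^{MIBL}(\Q))$ between $\DMILPR(\Q)$ and itself, and finally exhibit a non-closed rational MIBL set for strictness. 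For the easy inclusion $\mathscr T_R^{BLP-UI} \subseteq \DMILPR$: given a BLP-UI set $S = S^1 \cap S^2 \cap S^3$ as in \cref{Def:MIBL} with $\mathcal I_F = \emptyset$, the set $S^1 \cap S^2$ carries no lower-level integrality and is therefore a CBL set, hence by \cref{thm:RepEquiv} (together with $\mathscr T^{CBL} \subseteq \mathscr T_R^{CBL}$) it is a finite union of polyhedra $\bigcup_i P_i$; intersecting with $S^3$ gives $S = \bigcup_i (P_i \cap S^3)$, and each $P_i \cap S^3$ is exactly the set of mixed-integer points in $P_i$ with the upper-level coordinates indexed by $\mathcal I_L$ constrained integral, i.e.\ a member of $\MILP$. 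Thus $S \in \DMILP$, and since $\DMILPR$ is closed under linear images, every BLP-UI-representable set lies in $\DMILPR$; the rational version uses the rational form of \cref{thm:RepEquiv} recorded in \cref{rem:rational-version}.

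For the reverse inclusion $\DMILPR \subseteq \mathscr T_R^{BLP-UI}$, I would use an ``augmentation'' construction. By (the proof of) \cref{lem:DMIRisFiniteUnion}, any $T \in \DMILPR$ can be written $T = L\big(R \cap (\R^{n'} \times \Z^{n''})\big)$ for a linear map $L$ and a finite union of polyhedra $R$; by \cref{thm:RepEquiv}, $R \in \mathscr T_R^{CBL}$, so $R = \pi(W)$ with $W$ a CBL set and $\pi$ linear. I then form a BLP-UI set by keeping the variables and the continuous-lower-level bilevel constraint of $W$ untouched, adjoining $n''$ fresh upper-level variables $u$, adding to the $S^1$-block the linear equalities tying each $u_k$ to the coordinate of $\pi$ that must be integral, and placing $u \in \Z^{n''}$ in the $S^3$-block; this is a genuine BLP-UI set whose image under an appropriate linear transform is $R \cap (\R^{n'} \times \Z^{n''})$, so composing with $L$ gives $T \in \mathscr T_R^{BLP-UI}$. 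Nothing here uses rationality, so $\mathscr T_R^{BLP-UI} = \DMILPR$ and $\mathscr T_R^{BLP-UI}(\Q) = \DMILPR(\Q)$; in particular $\DMILPR(\Q) \subseteq \mathscr T_R^{BLP-UI}(\Q) \subseteq \mathscr T_R^{MIBL}(\Q)$. Since \cref{thm:JeroslowLowe} exhibits each rational MILP-representable set as a finite union of polytopes summed with a rational finitely generated monoid, and such a monoid sits in a lattice, hence is closed, while a sum of a compact set with a closed set is closed, every set in $\DMILPR(\Q)$ is closed, so $\DMILPR(\Q) \subseteq \cl(\mathscr T_R^{MIBL}(\Q))$.

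The main obstacle is the reverse inclusion $\cl(\mathscr T_R^{MIBL}(\Q)) \subseteq \DMILPR(\Q)$, which needs the value-function machinery. For a rational MIBL set $S = S^1 \cap S^2 \cap S^3$, I would rewrite the $\arg\max$ constraint defining $S^2$ as: $Dy \le g - Cx$, $y_i \in \Z$ for $i \in \mathcal I_F$, and $f^\top y \ge v(x)$, where $v$ is the value function of the lower-level mixed-integer program, valid on the polyhedral set of $x$ for which that program is feasible and bounded (boundedness being governed, via LP duality on the relaxation, by a single global feasibility condition on $\{u \ge 0 : D^\top u = f\}$, and infeasibility by a polyhedral condition on $x$ — these degenerate regions are precisely what can make $S$ non-closed). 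By \citet{blair1995closed}, $v$ is a Jeroslow function there, so $\{(x,y) : f^\top y \ge v(x)\}$ is a sublevel set of a Jeroslow function; unwinding the outer $\max$ over Jeroslow pieces, the $\min$ inside each Gomory function, and the $\floor{x}_{E_i}$ substitutions (each introducing an auxiliary integer variable constrained inside a generalized polyhedron) exhibits it as a Boolean combination, followed by a projection, of sublevel sets of Chv\'atal functions. Those are finite unions of GMILP-representable sets by \cref{thm:ChvGomJer}, and the algebra property of $\GDMILPR$ keeps the combination, the projection, and the subsequent intersections with the upper-level polyhedron $S^1$ and the upper-level integrality $S^3$ inside $\GDMILPR(\Q)$. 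Hence every rational MIBL set — and, by closure of $\GDMILPR(\Q)$ under rational linear images, every rational MIBL-representable set — belongs to $\GDMILPR(\Q)$; taking closures, and invoking that the closure of a rational set in $\GDMILPR$ is a rational set in $\DMILPR$ (the precise manifestation of the ``up to closures'' caveat, obtained from the structure of generalized polyhedra together with \cref{thm:JeroslowLowe}), yields $\cl(\mathscr T_R^{MIBL}(\Q)) \subseteq \DMILPR(\Q)$. Combined with the previous paragraph this gives $\cl(\mathscr T_R^{MIBL}(\Q)) = \mathscr T_R^{BLP-UI}(\Q) = \DMILPR(\Q)$.

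Finally, for $\mathscr T_R^{MIBL}(\Q) \supsetneq \cl(\mathscr T_R^{MIBL}(\Q))$: containment is immediate since $\cl(\mathscr T_R^{MIBL}(\Q)) = \DMILPR(\Q) \subseteq \mathscr T_R^{MIBL}(\Q)$. For strictness, because every member of $\DMILPR(\Q)$ is closed, it suffices to produce a rational MIBL-representable set that is not topologically closed; a lower-level binary variable coupled to the leader's variable already does this, yielding a feasible region of the form $\{(x,0) : 0 \le x < 1\} \cup \{(x,1) : 1 \le x \le 2\}$, and such an example is supplied by \cref{lem:StrictIncl}. The hardest and least routine part is the inclusion $\cl(\mathscr T_R^{MIBL}(\Q)) \subseteq \DMILPR(\Q)$ treated above: converting the bilevel optimality condition into a sublevel set of a Jeroslow function while correctly handling the $x$-regions where the lower level is infeasible or unbounded, and then tracking the resulting set through \cref{thm:ChvGomJer} and the algebra of $\GDMILPR$ down to $\DMILPR(\Q)$ after taking closures; everything else is bookkeeping on top of \cref{thm:RepEquiv}, \cref{lem:DMIRisFiniteUnion}, and \cref{thm:JeroslowLowe}.
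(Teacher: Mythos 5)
Your proposal is correct and follows essentially the same route as the paper: the equality $\mathscr T_R^{BLP-UI}=\DMILPR$ via \cref{thm:RepEquiv} and the added-integer-upper-variable construction (the paper's \cref{lem:DMIPinUIMIBL,lem:UIMIBLinDMIP}), the inclusion $\mathscr T_R^{MIBL}(\Q)\subseteq\GDMILPR(\Q)$ via Blair's Jeroslow value function, \cref{thm:ChvGomJer} and the algebra \cref{thm:algebra}, the identity $\cl(\GDMILPR(\Q))=\DMILPR(\Q)$ via the generalized Jeroslow--Lowe decomposition, and strictness via the example of \cref{lem:StrictIncl}. Your explicit attention to the $x$-regions where the lower level is infeasible or unbounded is, if anything, slightly more careful than the paper's treatment in \cref{lem:MIBLinDMI}.
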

Behind the proof of this result are two novel technical results that we believe have interest in their own right. The first concerns an algebra of sets that captures, to some extent, the inherent structure that arises when bilevel constraints and integrality interact. Recall that an algebra of sets is a collection of sets that is closed under taking complements and unions. It is trivial to observe that finite unions of generalized polyhedra form an algebra, i.e., a family that is closed under finite unions, finite intersections and complements. We show that a similar result holds even for finite unions of generalized mixed-integer representable sets.
\begin{theorem}\label{thm:algebra}
    The family of sets $\left \lbrace S \subseteq\R^n:S \in \GDMILPR(\Q) \right \rbrace$ is an algebra over $\R^n$ for any $n$.  
\end{theorem}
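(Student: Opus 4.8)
The plan is to show that the family $\GDMILPR(\Q)$ (restricted to sets living in a fixed $\R^n$) is closed under complementation and under finite unions; closure under intersection then follows by De Morgan. Closure under finite unions is immediate from the very definition of $\GDMILPR$ (a union of two finite unions of sets in $\GMILPR$ is again a finite union of such sets), so the entire content is in \emph{closure under complements}. By \cref{lem:DMIRisFiniteUnion} it suffices to show: if $S = \bigcup_{i=1}^k S_i$ where each $S_i \in \GMILPR(\Q)$, then $\R^n \setminus S \in \GDMILPR(\Q)$.

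\textbf{Reduction to a single GMILP-representable set.} Since $\R^n\setminus\bigcup_i S_i = \bigcap_i(\R^n\setminus S_i)$ and we are trying to prove the family is closed under intersection \emph{once we have complements}, the natural route is: first prove that the complement of a \emph{single} set $S \in \GMILPR(\Q)$ lies in $\GDMILPR(\Q)$; then handle $\bigcap_i(\R^n\setminus S_i)$ using closure under finite intersections. But closure under intersection is itself something we must establish. So I would actually organize the argument in two self-contained lemmas proven in sequence: (1) $\GDMILPR(\Q)$ is closed under finite intersections (this should be doable directly, by an extended-formulation / product-of-polyhedra argument in the spirit of the proof of \cref{lem:DMIRisFiniteUnion}, intersecting the generalized polyhedra piece-by-piece and keeping the integrality constraints on the glued-together integer coordinates — a linear transform commutes with the construction); (2) the complement of a single $S \in \GMILPR(\Q)$ is in $\GDMILPR(\Q)$. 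Given (1) and (2), the complement of $\bigcup_i S_i$ equals $\bigcap_i (\R^n\setminus S_i)$, a finite intersection of members of $\GDMILPR(\Q)$, hence in $\GDMILPR(\Q)$ by (1).

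\textbf{Complementing a single GMILP-representable set — the heart of the matter.} Write $S = L(T)$ where $T = \{(x,y)\in\R^{p}\times\Z^{q} : Mx + Ny \,(\le/<)\, r\}$ is the set of mixed-integer points of a rational generalized polyhedron $P$, and $L$ is a rational linear map. The obstacle is twofold: the projection $L$ is not injective, and the integrality makes the image genuinely non-polyhedral. I would first reduce the projection away: by \cref{rem:1}, $S$ is a projection of $T$, and I claim $\proj_x(T)$, being a finite union of GMILP-representable sets by... — no, that is circular. The cleaner approach: invoke the \emph{structure} we are allowed to assume downstream, namely that $\GDMILPR(\Q)$ equals finite unions of MILP-representable sets up to closures is \emph{not} yet available; so instead I would argue directly. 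For $T$ itself: decompose $\Z^q$ by residues is infinite, so instead use that a rational (generalized) mixed-integer set's \emph{projection} onto the $x$-space, intersected with a box, is a finite union of generalized polyhedra; unboundedly, one needs the Jeroslow–Lowe-type periodicity. Concretely, I would split $\R^n$ into the part "far out in recession directions," where $S$ agrees with a finite union of translated copies of its recession structure, and a bounded part handled by a finite case analysis over integer points. The complement of a finite union of generalized polyhedra is again a finite union of generalized polyhedra (an easy, purely polyhedral fact — complement a half-space, distribute), which gives the base case $q=0$; then induct on $q$ or use the periodicity to lift.

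\textbf{Main obstacle.} The genuinely hard step is (2): complementing a single generalized mixed-integer \emph{representable} set, where both the projection $L$ and the lattice $\Z^q$ are present simultaneously. The projection destroys the lattice structure of the image, so one cannot simply "complement the polyhedron and re-take integer points." I expect to need a normal-form result for $\proj_x(T)$ — essentially a generalized-polyhedron analogue of Jeroslow–Lowe showing $\proj_x(T)$ is a finite union of sets of the form (generalized polytope) $+$ (finitely generated monoid), with the non-closed boundaries carefully tracked — and then to complement that explicitly. Establishing that normal form, and checking that complements of the monoid-summed pieces stay inside $\GDMILPR(\Q)$ (rather than leaking out to something larger), is where the real work lies; the finite-union and finite-intersection closures, and the $q=0$ purely polyhedral base case, are routine by comparison.
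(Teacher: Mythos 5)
Your architecture matches the paper's: finite unions are free from the definition, finite intersections come from an extended-formulation argument (the paper's \cref{lem:MIintersect,lem:DMIIntersect}), and complements are reduced via De Morgan to complementing a single piece in a Jeroslow--Lowe-type normal form. You also correctly identify that the normal form must be a generalized-polyhedron analogue of Jeroslow--Lowe --- the paper proves exactly this (\cref{cor:jeroslow-lowe-generalized}): every set in $\GDMILPR(\Q)$ is a finite union of sets of the form $P+M$ with $P$ a rational relatively open (or generalized) polytope and $M$ a rational finitely generated monoid. The paper reaches it not by your ``far out in recession directions vs.\ bounded part'' split, but by decomposing a generalized polyhedron into relatively open polyhedra, proving a Minkowski--Weyl theorem for those (\cref{lem:MinkWeyl}), and peeling off the monoid with a fundamental-parallelepiped argument (\cref{lem:genisrelOp}); your route is vaguer but aims at the same destination.

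The genuine gap is the step you explicitly defer: showing $(P+M)^c \in \GDMILPR(\Q)$. This is not routine, and ``complement that explicitly'' hides all of the difficulty. Three obstructions must be overcome. First, distinct translates $P + m$, $P + m'$ for $m \neq m' \in M$ can overlap, so you cannot complement translate-by-translate; the paper (\cref{lem:PandLinIndMonoid}) fixes this by passing to a scaled sublattice $\bar\Lambda$ whose fundamental parallelepiped contains $P$, writing the original lattice as a \emph{finite} disjoint union of cosets of $\bar\Lambda$, and complementing each coset piece where translates are provably disjoint (Claims 2 and 3 there). Second, the monoid need not be full-dimensional, so $(P+M)^c$ contains an entire ``orthogonal complement'' region that must be captured separately; the paper adds orthogonal generators $\tilde m^{k+1},\ldots,\tilde m^n$ and the halfspaces $H^i$ to account for it. Third, a general rational finitely generated monoid is not generated by linearly independent vectors, so the fundamental-parallelepiped device does not apply directly; the paper (\cref{lem:MILPRcompl}) triangulates $\cone(M)$ into simplicial cones and invokes Jeroslow's structure theorem to write each piece as a finite union of translates of a monoid with linearly independent generators --- and this is one of the two places where rationality is genuinely indispensable (\cref{rem:rationality-assumption-2}). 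None of this is present in your sketch, so as written the proof of the complementation step --- and hence of the theorem --- is incomplete.
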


%\amitabh{Should we add a corollary that ${\DMILPR}(\Q)$ is an algebra (up to topological closures)? We advertise this in the Introduction. We just have to appeal to \cref{lem:closure-of-GMILPR-is-DMILPR}, I think.} \chris{What do mean by ``up to closures''? That vague language is fine for me in the introduction, but here we would need to be price. Does it mean that $\cl(\DMILPR(\Q))$ is an algebra? If so, then yes, we would just need \cref{lem:closure-of-GMILPR-is-DMILPR}. But I actually am in favor of noting putting in explicitly, or else possibly replacing \eqref{thm:algebra} with this statement. I feel it would kind of be too crowded to have both. If we plan to keep just one, I prefer to keep the current one because of its usefulness in proving things.}

The connection of the above algebra to optimization is made explicit in the following theorem, which is used in the proof of \cref{thm:RepMIBL}.
%Observe that linear functions ``build'' the algebra of finite unions of generalized polyhedra \amitabh{What do we mean by ``build"?}. In other words, if $L$ is a linear function, the set of points satisfying $L(x) = 0$, $L(x) \leq 0$,\, $L(x) \geq 0$, $L(x) < 0$,\, $L(x) > 0$ are all generalized polyhedra. We show that the algebra ${\GDMILPR}$ are all built by a family of functions. \amitabh{Unclear paragraph.}
\begin{theorem}\label{thm:ChvGomJer}
    Let $\psi:\R^n\mapsto\R$ be a Chv\'atal, Gomory, or Jeroslow function. Then (i) $\left \lbrace x:\psi(x) \leq 0\right \rbrace$ (ii) $\left \lbrace x:\psi(x) \geq 0\right \rbrace$ (iii) $\left \lbrace x:\psi(x) = 0\right \rbrace$ (iv) $\left \lbrace x:\psi(x) < 0\right \rbrace$ and (v) $\left \lbrace x:\psi(x) > 0\right \rbrace$ are elements of ${\GDMILPR}(\Q)$.
\end{theorem}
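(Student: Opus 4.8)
The plan is to leverage \cref{thm:algebra} so that the bulk of the work reduces to handling the simplest building block, namely affine (degree-zero Chv\'atal) functions and the single floor operation, and then bootstrapping by structural induction. The five target sets are related by Boolean operations: $\{\psi = 0\} = \{\psi \le 0\} \cap \{\psi \ge 0\}$, $\{\psi < 0\} = \{\psi \le 0\} \setminus \{\psi = 0\} = \{\psi \le 0\} \cap \{\psi \ge 0\}^c$, and $\{\psi > 0\} = \{\psi \ge 0\} \cap \{\psi \le 0\}^c$; moreover $\{\psi \ge 0\} = \{-\psi \le 0\}$, and if $\psi$ is a Chv\'atal function then so is an affine image $-\psi$ only up to an additive constant issue, so I will instead argue directly that both the sublevel set $\{\psi \le 0\}$ and the superlevel set $\{\psi \ge 0\}$ lie in $\GDMILPR(\Q)$. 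Once that is known for all three function classes, parts (iii), (iv), (v) follow immediately because $\GDMILPR(\Q)$ is an algebra by \cref{thm:algebra} (closed under intersection and complement). So the real content is parts (i) and (ii).

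For Chv\'atal functions I would induct on the order (number of floor operations) of a binary-tree representation. In the base case $\psi$ is affine with rational data, so $\{\psi \le 0\}$ and $\{\psi \ge 0\}$ are rational (closed) halfspaces, hence rational regular polyhedra, hence members of $\GMILPR(\Q) \subseteq \GDMILPR(\Q)$. For the inductive step, write $\psi$ as built from subtrees; the only nontrivial node type is a floor node $\psi(x) = \floor{g(x)}$ where $g$ has strictly smaller order. Here I use the key observation that $\floor{g(x)} \le 0 \iff g(x) < 1$ and $\floor{g(x)} \ge 0 \iff g(x) \ge 0$. So $\{\psi \le 0\} = \{g < 1\}$ and $\{\psi \ge 0\} = \{g \ge 0\}$, and these are exactly the strict-sublevel and superlevel sets of the lower-order function $g$, which are in $\GDMILPR(\Q)$ by the induction hypothesis (using all five set types at the lower level, which is why the induction must carry the strict versions along — and they do, because $\GDMILPR(\Q)$ is an algebra). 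The linear-combination nodes ($\psi = a_1 g_1 + a_2 g_2$ with $a_i \ge 0$ rational) are the awkward case: sublevel sets of sums are not sublevel sets of the summands, so induction on the two subfunctions does not directly apply. I would handle this by instead appealing to the structural characterization: a Chv\'atal function can always be written in a ``normal form'' where floors are applied to affine functions and the outermost layer is affine (e.g. via the results on Chv\'atal/Gomory functions in \citet{Blair1982,blair1995closed} or \citet{Basu2016}), equivalently $\psi(x) = \langle c, x\rangle + \beta + \sum_j \lambda_j \lfloor \text{(Chv\'atal of smaller order)}_j \rfloor$; introducing an auxiliary integer variable $z_j$ for each floor term with the generalized-polyhedral constraints $z_j \le (\cdot)_j < z_j + 1$ linearizes the floor, and then the condition $\psi(x) \le 0$ becomes linear in $(x, z)$. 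The feasible $(x,z)$ region is then an intersection of a generalized polyhedron with lower-order Chv\'atal-function constraints on the $z_j$'s, and we close the induction. The projection onto $x$ stays inside $\GDMILPR(\Q)$ because that family is closed under linear images (it is of the form $\mathscr T_R$).

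For Gomory functions $G = \min_{i} \psi_i$, note $\{G \le 0\} = \bigcup_i \{\psi_i \le 0\}$ (a finite union, fine) and $\{G \ge 0\} = \bigcap_i \{\psi_i \ge 0\}$ (a finite intersection, fine), so both reduce to the Chv\'atal case via the algebra property; similarly $\{G < 0\} = \bigcup_i \{\psi_i < 0\}$ and $\{G > 0\} = \bigcap_i\{\psi_i > 0\}$ and $\{G = 0\}$ by complementation. For Jeroslow functions $J(x) = \max_{i \in \mathcal I}\{ G(\floor{x}_{E_i}) + w_i^\top(x - \floor{x}_{E_i})\}$, I would first show each inner term $h_i(x) := G(\floor{x}_{E_i}) + w_i^\top(x - \floor{x}_{E_i})$ has all five level sets in $\GDMILPR(\Q)$: introduce an integer vector $u_i = \floor{E_i^{-1} x}$ constrained by the generalized-polyhedral inequalities $u_i \le E_i^{-1} x < u_i + \mathbf 1$, substitute $\floor{x}_{E_i} = E_i u_i$, so $h_i(x) = G(E_i u_i) + w_i^\top(x - E_i u_i)$; for fixed sign $\le 0$ this is $\{(x,u_i) : u_i \text{ integral}, u_i \le E_i^{-1}x < u_i+\mathbf 1, G(E_i u_i) + w_i^\top x - w_i^\top E_i u_i \le 0\}$ — and the constraint on $u_i$ involving $G$ is a Gomory-sublevel-type constraint on the variable $u_i$ (after the affine substitution $u_i \mapsto E_i u_i$, which Gomory functions are closed under), already handled; the remaining inequality is linear in $(x, u_i)$. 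Project onto $x$. Then $\{J \le 0\} = \bigcap_i \{h_i \le 0\}$, $\{J \ge 0\} = \bigcup_i\{h_i \ge 0\}$, etc., and the algebra property finishes it. The main obstacle I anticipate is precisely the linear-combination node in the Chv\'atal induction — making the auxiliary-variable linearization airtight requires the normal-form representation of Chv\'atal functions and care that the auxiliary constraints (strict and non-strict inequalities bounding floors, plus lower-order Chv\'atal constraints) all live in $\GDMILPR(\Q)$ and that taking the projection does not leave the family; \cref{thm:algebra} plus the closure of $\mathscr T_R$-type families under linear images is exactly what makes this go through.
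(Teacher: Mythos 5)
Your proposal is correct and follows essentially the same route as the paper: reduce everything to the $\le 0$ and $\ge 0$ cases via the algebra property of \cref{thm:algebra}, handle Chv\'atal functions by induction on the number of floor operations using the normal form that pulls a floor to the outermost level (the paper invokes Theorem~4.1 of \citet{Basu2016}, i.e.\ $\psi = \psi_1 + \floor{\psi_2}$ with lower-order $\psi_1,\psi_2$) and pin each floor with an auxiliary integer variable via a non-strict and a strict lower-order Chv\'atal inequality, then treat Gomory functions as finite unions/intersections and Jeroslow functions by introducing integer variables for $\floor{x}_{E_i}$. The ``awkward'' linear-combination node you flag is exactly the issue the paper resolves with that same normal form, so no gap remains.
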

As observed below \cref{def:JeroslowFunc}, the family of Jeroslow functions capture the properties of the bilevel constraint \eqref{eq:BLP:Bilevel} and thus \cref{thm:ChvGomJer} proves critical in establishing \cref{thm:RepMIBL}.

Finally, we also discuss the computational complexity of solving bilevel programs and its the connections with representability. A key result in this direction is the following.

\begin{theorem}\label{thm:UpIntisNP}
If $S$ is a rational BLP-UI set, then the sentence ``Is $S$ non-empty?'' is in $\mathcal{NP}$.
\end{theorem}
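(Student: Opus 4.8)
The plan is to reduce the emptiness question for a rational BLP-UI set $S$ to a polynomially-sized family of rational mixed-integer linear feasibility problems, and then invoke the classical fact that feasibility of a rational mixed-integer linear system lies in $\mathcal{NP}$. Write $S = S^1 \cap S^2 \cap S^3$ as in \cref{Def:MIBL} with $|\mathcal I_F| = 0$, so the lower-level problem in \eqref{eq:BLP:Bilevel} is the parametric linear program $\max_y\{f^\top y : Dy \le g - Cx\}$. By linear programming duality, $y \in \arg\max$ precisely when there is a vector $\mu$ with $\mu \ge 0$, $D^\top \mu = f$ (dual feasibility), $Dy \le g - Cx$ (primal feasibility), and complementary slackness $\mu_j\bigl((g - Cx)_j - (Dy)_j\bigr) = 0$ for every $j \in [m_f]$. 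The only obstruction to writing this as a linear system is the bilinear term $\mu^\top C x$ lurking in the complementary slackness (and strong-duality) condition; this is circumvented by guessing the complementarity support pattern.

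Concretely, the nondeterministic algorithm first guesses a set $J \subseteq [m_f]$ of ``active'' rows, then tests feasibility of the mixed-integer linear system in the variables $(x,y,\mu)$ given by $Ax + By \le b$; $Dy \le g - Cx$; $D^\top \mu = f$; $\mu \ge 0$; $\mu_j = 0$ for $j \notin J$; $(Dy)_j = (g - Cx)_j$ for $j \in J$; and $x_i \in \Z$ for $i \in \mathcal I_L$. The claim is that $S \neq \emptyset$ if and only if this system is feasible for at least one choice of $J$. For the ``if'' direction, from a solution $(x,y,\mu)$ one reads off that $y$ is primal feasible, $\mu$ is dual feasible, and for each $j$ either $\mu_j = 0$ or $(Dy)_j = (g-Cx)_j$, so complementary slackness holds and $y$ solves the lower-level LP; together with $Ax+By \le b$ and the integrality of $x$ this gives $(x,y) \in S$. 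For the ``only if'' direction, take $(x,y) \in S$; since $\arg\max$ is nonempty the lower-level LP is feasible and bounded, so its dual attains an optimal $\mu^\star \ge 0$ with $D^\top \mu^\star = f$ and complementary slackness; putting $J = \{j : (Dy)_j = (g-Cx)_j\}$ forces $\mu^\star_j = 0$ for $j \notin J$, and $(x,y,\mu^\star)$ solves the system for this $J$. Note the reduction also behaves correctly when the lower level is infeasible or unbounded for every $x$: in the former case no $y$ satisfies $Dy \le g - Cx$, in the latter no $\mu$ satisfies $D^\top\mu = f,\ \mu \ge 0$, so every guessed system is infeasible, matching $S = \emptyset$.

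Finally, each guess $J$ is a string of length $m_f$, polynomial in the encoding of $S$, and the associated system has rational data of size polynomial in that of $A,B,C,D,b,f,g$. By the classical bound on the sizes of solutions of feasible rational mixed-integer linear programs, if such a system is feasible it has a solution of bit size polynomial in its data; guessing such $(x,y,\mu)$ and verifying the polynomially many linear (in)equalities together with the integrality of the coordinates indexed by $\mathcal I_L$ is a polynomial-time check. Hence the overall procedure --- guess $J$, guess $(x,y,\mu)$, verify --- runs in nondeterministic polynomial time, establishing that deciding nonemptiness of a rational BLP-UI set is in $\mathcal{NP}$. (Here rationality is used exactly in invoking the polynomial size bound for integer programming.)

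\textbf{Main obstacle.} The crux is the correctness of the support-guessing reduction: verifying that ranging over all patterns $J \subseteq [m_f]$ captures lower-level optimality exactly, so that the bilinear strong-duality equation is never needed and each resulting instance is genuinely a mixed-integer \emph{linear} feasibility problem. Everything else --- LP duality, the behavior in the infeasible/unbounded cases, and the polynomial size bounds for integer programming --- is standard bookkeeping.
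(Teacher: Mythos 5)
Your proof is correct, but it takes a genuinely different route from the paper's. The paper also passes to LP optimality conditions for the lower level, but it keeps the \emph{strong duality} equation $\lambda^\top(g-Cx) = f^\top y$, accepts the resulting single bilinear (quadratic) inequality in $(x,\lambda)$, and then cites the result of del Pia et al.\ that deciding feasibility of a rational mixed-integer program with exactly one quadratic inequality is in $\mathcal{NP}$. You instead replace strong duality by \emph{complementary slackness} and nondeterministically guess the active set $J\subseteq[m_f]$, which linearizes everything: each guess yields a genuine rational MILP feasibility problem, and membership in $\mathcal{NP}$ follows from the classical polynomial bound on the encoding size of solutions of feasible rational MILPs. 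Your argument is therefore more elementary and self-contained (it needs only textbook facts about LP duality and integer programming), at the price of an extra guessing stage --- which is harmless for an $\mathcal{NP}$ upper bound; the paper's argument is shorter but leans on a more recent and less standard complexity result. One small wording slip: your opening sentence promises a ``polynomially-sized family'' of MILPs, whereas the family indexed by $J$ has $2^{m_f}$ members; what you actually (and correctly) use is that each member is specified by a polynomial-length certificate, as your description of the nondeterministic algorithm makes clear. The correctness of the support-guessing equivalence, including the treatment of infeasible and unbounded lower levels, is argued properly in both directions.
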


\begin{remark} In light of the \cref{thm:RepMIBL,thm:UpIntisNP}, we observe the following. While adding integrality constraints in the upper level improves the modeling power, it does not worsen the theoretical difficulty to solve such problems. We compare this with the results of \citep{Lodi2014} which says that if there are integral variables in the lower level as well, the problem is much harder ($\Sigma_p^2$-complete). However, by \cref{thm:RepMIBL}, this does not improve modeling power up to closures. This shows some of the subtle interaction between notions of complexity and representability.
\end{remark}

\section{Representability of continuous bilevel sets}\label{sub:proof_of_theorem_thm:repequiv}

The goal of this section is to prove \cref{thm:RepEquiv}. We establish the result across several lemmata. First, we show that sets representable by polyhedral reverse convex constraints are unions of polyhedra and vice versa. Then we show three inclusions, namely $\mathscr T_R^{PRC} \subseteq \mathscr T_R^{CBL}$, $\mathscr T_R^{CBL} \subseteq \mathscr T_R^{LC}$ and $\mathscr T_R^{LC} \subseteq \mathscr T_R^{PRC}$. The three inclusions finally imply \cref{thm:RepEquiv}.

Now we prove a useful technical lemma. 

 \begin{lemma}\label{thm:ReprIncl}
If $\mathscr T^1$ and $\mathscr T^2$ are two families of sets such that $\mathscr T^1 \subseteq \mathscr T^2_R$ then $\mathscr T^1_R \subseteq \mathscr T^2_R$. Moreover, the rational version holds, i.e., $\mathscr T^1(\Q) \subseteq \mathscr T^2_R(\Q)$ implies $\mathscr T^1_R(\Q) \subseteq \mathscr T^2_R(\Q)$.
\end{lemma}
\begin{proof}
  Let $T\in \mathscr T^1_R$. This means there is a linear transform $L^1$ and $T^1\in\mathscr T^1$ such that $T = L^1(T^1)$. Also, this means $T^1 \in \mathscr T^2_R$, by assumption. So there exists a linear transform $L^2$ and $T^2\in \mathscr T^2$ such that $T^1 = L^2(T^2)$. So $T = L^1(T^1) = L^1(L^2(T^2)) = (L^1\circ L^2) (T^2)$, proving the result. The rational version follows by restricting all linear transforms and sets to be rational.
\end{proof}

We now establish the first building block of \cref{thm:RepEquiv}.

\begin{lemma}\label{lem:PRCisPolyhedunion}
The following holds: 
\begin{equation*}
\mathscr P \quad=\quad \mathscr T^{PRC} \quad=\quad \mathscr T_R^{PRC}.
\end{equation*}
\end{lemma}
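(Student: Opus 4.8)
\textbf{Proof proposal for \cref{lem:PRCisPolyhedunion}.}

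The plan is to establish the chain $\mathscr P \subseteq \mathscr T^{PRC} \subseteq \mathscr T_R^{PRC} \subseteq \mathscr P$, where the middle containment is immediate (any family is contained in its representable closure). So the work is in the two outer containments.

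First I would show $\mathscr P \subseteq \mathscr T^{PRC}$. Let $S = \bigcup_{j=1}^k P_j$ be a finite union of polyhedra, with $P_j = \{x : \langle a^{j,i}, x\rangle \le b_{j,i}, \, i \in [m_j]\}$. The idea is that $x \in S$ iff for at least one $j$, all of the defining inequalities of $P_j$ hold, i.e.\ iff $\min_{j} \max_{i} \big(\langle a^{j,i}, x\rangle - b_{j,i}\big) \le 0$. Equivalently, $x \in S$ iff $\max_j \big(-\max_i(\langle a^{j,i},x\rangle - b_{j,i})\big) \ge 0$; but $-\max_i(\cdots)$ is a minimum of affine functions, so this is a max of mins, not a polyhedral convex function. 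The cleaner route: $x\in S$ iff $\prod_{j}\big(\text{something}\big)\le 0$ — also nonlinear. The correct device is to use the product-of-one-inequality-per-polytope trick symbolically via a single PRC constraint does not directly work, so instead I would observe that $S$ is the set where $g(x) \ge 0$ fails to be violated simultaneously across all $j$; concretely, write $S = \{x : f(x) \ge 0\}$ where $f(x) := \max_{\sigma} \big(\min_{j} (\langle a^{j,\sigma(j)}, x\rangle - b_{j,\sigma(j)})\big)\cdot(-1)$ — still problematic. Given these difficulties, the honest approach is: a PRC set is $\{x : Ax \le b, f_i(x) \ge 0, i\in[n']\}$, and I claim $\{x : f(x) \ge 0\}$ for a single polyhedral convex $f = \max_\ell(\langle\alpha^\ell,x\rangle - \beta_\ell)$ equals $\bigcup_\ell \{x : \langle\alpha^\ell, x\rangle - \beta_\ell \ge 0\}$, a union of halfspaces. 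Hence a PRC set, being an intersection of a polyhedron with finitely many such unions, is by distributivity a finite union of polyhedra — this gives $\mathscr T^{PRC} \subseteq \mathscr P$. Conversely, given $S = \bigcup_j P_j$, I need to realize it as one PRC set; here I would use that $\R^n$ itself (take $A,b$ empty) together with well-chosen $f_i$'s suffices only if a finite union of polyhedra can be cut out by finitely many reverse-convex constraints, which is exactly the content of the known fact that complements of polyhedra relate to sublevel/superlevel structure. The standard argument: $\bigcup_j P_j = \bigcap_{\sigma \in \prod_j [m_j]} \big(\bigcup_j \{x : \langle a^{j,\sigma(j)},x\rangle \le b_{j,\sigma(j)}\}\big)$ by distributing the union over the intersections defining each $P_j$; each inner union of halfspaces $\bigcup_j\{x: \langle a^{j,\sigma(j)}, x\rangle \le b_{j,\sigma(j)}\}$ is $\{x : f_\sigma(x) \ge 0\}$ with $f_\sigma(x) = \max_j (b_{j,\sigma(j)} - \langle a^{j,\sigma(j)}, x\rangle)$, a polyhedral convex function. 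Thus $\bigcup_j P_j = \{x : f_\sigma(x) \ge 0 \text{ for all } \sigma\}$, a PRC set (with $A, b$ trivial). This establishes $\mathscr P \subseteq \mathscr T^{PRC}$, and combined with the reverse inclusion above, $\mathscr P = \mathscr T^{PRC}$.

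Finally I would show $\mathscr T_R^{PRC} \subseteq \mathscr P$, which then closes the chain since $\mathscr P \subseteq \mathscr T^{PRC} \subseteq \mathscr T_R^{PRC}$. By the two facts already in hand, $\mathscr T^{PRC} = \mathscr P$, so it suffices to show that a linear image of a finite union of polyhedra is a finite union of polyhedra: $L(\bigcup_j P_j) = \bigcup_j L(P_j)$, and the linear image of a polyhedron is a polyhedron (Fourier--Motzkin elimination / projection of polyhedra). Hence $\mathscr T_R^{PRC} = \mathscr P_R = \mathscr P$.

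The main obstacle is the $\mathscr P \subseteq \mathscr T^{PRC}$ direction: one must resist the tempting but wrong "one reverse-convex function per polytope" encoding and instead distribute the union over the conjunctions defining each polyhedron, producing exponentially many (but finitely many) polyhedral convex constraints $f_\sigma$, one per selector $\sigma$. Getting the quantifier structure right — "$x$ lies in some $P_j$" becomes "for every way of picking one facet-inequality from each $P_j$, at least one of the chosen inequalities is satisfied" — is the crux, and it relies on the elementary but slightly delicate distributive law for unions and intersections of halfspaces. Everything else (polyhedral convex $f$ superlevel set $=$ union of halfspaces; linear images commute with unions; projections of polyhedra are polyhedra) is routine.
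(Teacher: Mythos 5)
Your proposal is correct and follows essentially the same route as the paper: both directions use the distributive law between unions and intersections of halfspaces (the paper's $f_\omega$ construction is exactly your $f_\sigma$), and the final step $\mathscr T_R^{PRC} = \mathscr P_R = \mathscr P$ is identical. The only cosmetic difference is the string of discarded false starts in your first paragraph, which should be cut since the argument you ultimately settle on is the right one.
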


\begin{proof}
	We start by proving the first equivalence. Consider the $\supseteq$ direction first.
  Let $S\in \mathscr T^{PRC}$. Then $S = \left\{ x\in\R^n:\,Ax \leq b,\, f_i(x) \geq 0\text{ for }i\in[n'] \right\}$ for some polyhedral convex functions $f_i$ with $k_i$ pieces each. First, we show that $S$ is a finite union of polyhedra. Choose one  halfspace from the definition of each of the functions $f_i(x) = \max_{j=1}^{k_i} \{\langle \alpha^{ij} \rangle - \beta_j\}$ (i.e., $\left\{ x: \langle \alpha^{ij},\,x \rangle - \beta_{ij} \geq 0 \right\}$ for some $j$ and each $i$) and consider their intersection. This gives a polyhedron. There are exactly $K=\prod_{i=1}^{n'}k_i$ such polyhedra. We claim that $S$ is precisely the union of these $K$ polyhedra, intersected with $\left\{ x:Ax\leq b \right\}$ (clearly, the latter set is in $\mathscr P$). Suppose $x\in S$. Then $Ax\leq b$. Also since $f_i(x) \geq 0$ , we have $\max_{j=1}^{k_i } \left\{ \langle \alpha^{ij},\,x \rangle - \beta_{ij}  \right\} \geq 0$. This means for each $i$, there exists a $j_i$ such that $\langle \alpha^{ij_i},\,x \rangle - \beta_{ij_i} \geq 0$. The intersection of all such halfspaces is one of the $K$ polyhedra defined earlier. Conversely, suppose $x$ is in one of these $K$ polyhedra (the one defined by $\langle \alpha^{ij_i},\,x \rangle - \beta_{ij_i} \geq 0$ for $i\in [n']$) intersected with $\left\{ x:Ax\leq b \right\}$. Then, $f_i(x) = \max_{j=1}^{k_i} \left\{ \langle \alpha^{ij},\,x \rangle - \beta^{ij} \right\} \geq \langle \alpha^{ij_i},\,x \rangle - \beta_{ij_i} \geq 0$ and thus $x \in S$. This shows that $\mathscr T^{PRC} \subseteq \mathscr P$.

	Conversely, suppose $P\in \mathscr P$ and is given by $P = \bigcup_{i=1}^{k}P_i$ and $P_i = \left\{ x: A^ix\leq b^i \right\}$ where $b^i \in \R^{m_i}$. Let $a^i_j$ refer to the $j$-th row of $A^i$ and $b^i_j$ the $j$-th coordinate of $b^i$. Let $\Omega$ be the Cartesian product of the index sets of constraints, i.e., $\Omega = \{1, \ldots, m_1\} \times \{1, \ldots, m_2\} \times \ldots \times \{1, \ldots, m_k\}$. For any $\omega \in \Omega$, define	
	Now consider the following function:
	\begin{equation*}
		f_\omega(x) \quad=\quad \max_{i=1}^k \left\{ -\langle a^i_{\omega_i}, \,x \rangle + b^i_{\omega_i} \right\},
	\end{equation*}
    where $\omega_i$ denotes the index chosen in $\omega$ from the set of constraints in $A^i x \leq b^i$, $i=1, \ldots, k$. This construction is illustrated in \cref{fig:PRC}. Let $T = \left\{ x:\, f_\omega (x) \geq 0,\forall\,\omega\in\Omega \right\} \in \mathscr T^{PRC}$. We now claim that $P=T$. If $\bar x\in P$ then there exists an $i$ such that $\bar x\in P_i$, which in turn implies that $b^i - A^i\bar x\geq 0$. However each of the $f_\omega$ contains at least one of the rows from $b^i-A^i\bar x$, and that is non-negative. This means each $f_\omega$, which are at least as large as any of these rows, are non-negative. This implies $P\subseteq T$. Now suppose $\bar x \not\in P$. This means in each of the $k$ polyhedra $P_i$, at least one constraint is violated. Now consider the $f_\omega$ created by using each of these violated constraints. Clearly for this choice, $f_\omega(x) < 0$. Thus $\bar x\not\in T$. This shows $T\subseteq P$ and hence $P=T$. This finally shows $\mathscr P\subseteq \mathscr T^{PRC}$. Combined with the argument in the first part of the proof, we have $\mathscr T^{PRC} = \mathscr P$.
	\begin{figure}
	\centering
	\includegraphics{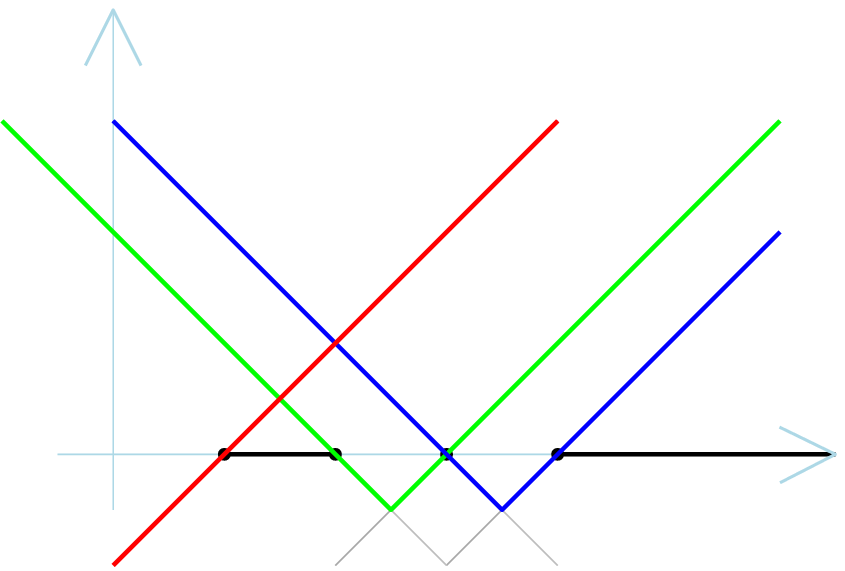}
        \caption{Representation of union of polyhedra $[1,\,2]\cup \{3\}\cup [4,\,\infty)$ as a PRC set. The three colored lines correspond to three different the polyhedral convex functions. 
        %\chris{How many functions are there? Is it true there are three? We might want to make them easier to distinguish from each other. I find the figure hard to understant.}\sri{Yes, there are three. Each blue 'v' is a function. Their details are in the proof. I feel adding the details here will make the caption exceedingly long.}\amitabh{Can we use three different colors for showing the three functions, as opposed to using blue for all three?} 
        The points where each of those functions
    are non-negative precisely correspond to the region shown in black, the set which we wanted to represent in the first place.}\label{fig:PRC}
	\end{figure}

	Now consider the set $\mathscr P_R$. A linear transform of a union of finitely many polyhedra {\em is} a union of finitely many polyhedra. Thus $\mathscr P_R = \mathscr P$. But $\mathscr P = \mathscr T^{PRC}$ and so $\mathscr T_R^{PRC} = \mathscr P_R = \mathscr P = \mathscr T^{PRC}$, proving the remaining equivalence in the statement of the lemma. 
\end{proof}

Next, we show that any set representable using polyhedral reverse convex constraints is representable using continuous bilevel constraints. To achieve this, we give an explicit construction of the bilevel set.

\begin{lemma}\label{lem:PRCinCBP}
The following holds: 
\begin{equation*}
\mathscr T_R^{PRC} \quad\subseteq\quad \mathscr T_R^{CBL}.
\end{equation*}
\end{lemma}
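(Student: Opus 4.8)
The plan is to show $\mathscr T^{PRC}_R \subseteq \mathscr T^{CBL}_R$ by reducing, via \cref{thm:ReprIncl}, to showing that every PRC set (not merely every PRC-representable set) is CBL-representable; indeed, by \cref{thm:ReprIncl} it suffices to prove $\mathscr T^{PRC} \subseteq \mathscr T^{CBL}_R$. So fix a PRC set
\[
S = \{ x \in \R^n : Ax \le b,\ f_i(x) \ge 0 \text{ for } i \in [n'] \},
\]
where each $f_i(x) = \max_{j=1}^{k_i}\{\langle \alpha^{ij}, x\rangle - \beta_{ij}\}$ is a polyhedral convex function. The engine of the construction is the elementary fact that for a polyhedral convex function $f$, the single scalar inequality $f(x) \ge 0$ is equivalent to the statement that the optimal value of the linear program $\max_{t}\{\, t : t \le \langle \alpha^{ij},x\rangle - \beta_{ij}\ \forall j,\ 0 \le t \le 1\,\}$ equals $1$ — or, more cleanly, that there exists $t \ge 0$ in the lower-level argmax with $f(x) \ge t$ and the follower forced to push $t$ up. I would introduce one lower-level variable $t_i$ for each $i \in [n']$ and a single follower objective $\sum_i t_i$.

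Concretely, the plan is to build a CBL set in the variables $(x, t)$ with $t = (t_1,\dots,t_{n'})$, upper-level constraints $Ax \le b$ together with $t_i \le 0$ for all $i$ (this is the $S^1$ part, on the leader side), and lower-level problem
\[
t \in \arg\max_t \Big\{ \textstyle\sum_{i=1}^{n'} t_i \ :\ t_i \le \langle \alpha^{ij}, x\rangle - \beta_{ij}\ \text{ for all } i \in [n'],\ j \in [k_i] \Big\}.
\]
I claim the projection of this CBL set onto the $x$-coordinates is exactly $S$. The key point: for fixed $x$, the lower level decouples across $i$, and the optimal value of the $i$-th block is precisely $\min_j \{\langle\alpha^{ij},x\rangle - \beta_{ij}\}$... wait — that is the wrong sign; one wants $f_i(x) = \max_j$. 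So instead the lower-level constraints should read $t_i \le \langle\alpha^{ij},x\rangle - \beta_{ij}$ replaced by the observation that maximizing $t_i$ subject to $t_i$ being at most \emph{some} one of the affine values is unbounded unless one is careful; the correct device is to not bound $t_i$ from one side by all pieces but to use the epigraph/hypograph structure. The clean fix is: in the lower level, maximize $\sum_i t_i$ subject to $t_i \le \langle\alpha^{ij_0},x\rangle$ — no. Let me restate the honest version: the lower level should be $\max_t\{\sum_i t_i : 0 \le t_i,\ t_i(\text{linking})\}$, and one recovers $f_i(x)\ge 0$ from the \emph{leader} constraint $t_i \le 0$ combined with the follower being forced to set $t_i = \min(0, f_i(x))$ — which is $0$ iff $f_i(x) \ge 0$. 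To force the follower to compute $\min(0, f_i(x))$, I bound $t_i$ above by $0$ inside the lower level and note $f_i(x) \ge 0 \iff$ the follower can achieve $t_i = 0$, and model $t_i \le f_i(x)$ by the epigraph trick is impossible since $f_i$ is convex, not concave; so instead one should bound from below. I would therefore in the actual write-up handle this by introducing, for each piece, a lower-level inequality and let the follower maximize, so that at optimum $t_i = \min_j(\cdots)$ cannot occur — rather one wants $t_i$ to equal each $\langle\alpha^{ij},x\rangle-\beta_{ij}$ simultaneously capped, which is the max. So: lower level constraints $t_i \ge \langle\alpha^{ij},x\rangle - \beta_{ij}$ for all $j$, with the follower \emph{minimizing} (equivalently maximizing the negative); then the optimal $t_i = f_i(x)$, and the leader constraint $t_i \ge 0$ (moved appropriately) enforces $f_i(x) \ge 0$.

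Cleaning this up: the CBL set is $\{(x,t) : Ax \le b,\ t_i \ge 0\ \forall i,\ t \in \arg\min_t\{\sum_i t_i : t_i \ge \langle\alpha^{ij},x\rangle - \beta_{ij}\ \forall i,j\}\}$, rewritten as an $\arg\max$ of $-\sum_i t_i$ to match \cref{Def:MIBL}, with no integer variables so $\mathcal I_L = \mathcal I_F = \emptyset$. The verification splits into two inclusions, both routine: if $x \in S$ then set $t_i = f_i(x) \ge 0$, which is lower-level optimal and satisfies all leader constraints; conversely if $(x,t)$ is in the CBL set then lower-level optimality forces $t_i = f_i(x)$ (the minimum of $\sum t_i$ over the feasible polyhedron, which decouples and is attained at $t_i = \max_j(\langle\alpha^{ij},x\rangle - \beta_{ij})$), and the leader constraint $t_i \ge 0$ then gives $f_i(x) \ge 0$, while $Ax\le b$ is directly imposed — so $x \in S$. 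Thus $S = \proj_x(\text{CBL set})$, hence $S \in \mathscr T^{CBL}_R$. The main obstacle is precisely the sign/orientation bookkeeping I stumbled over above — making sure the follower's optimization, written as an $\arg\max$ to fit \eqref{eq:BLP:Bilevel}, actually computes the convex function $f_i$ (a maximum of affine functions) rather than its concave counterpart, and that the decoupling across $i$ genuinely holds so that joint lower-level optimality is equivalent to blockwise optimality. Once the correct formulation is pinned down, the equivalence $S = \proj_x(\cdot)$ is a short case check and then \cref{thm:ReprIncl} upgrades $\mathscr T^{PRC} \subseteq \mathscr T^{CBL}_R$ to $\mathscr T^{PRC}_R \subseteq \mathscr T^{CBL}_R$.
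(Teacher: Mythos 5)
Your proposal is correct and, once the sign bookkeeping settles, arrives at exactly the paper's construction: upper-level constraints $Ax \le b$ and $t \ge 0$, with the follower minimizing $\sum_i t_i$ subject to $t_i \ge \langle\alpha^{ij},x\rangle - \beta_{ij}$ for all $i,j$ (written as an $\arg\max$ of $-\sum_i t_i$), so that blockwise lower-level optimality forces $t_i = f_i(x)$ and the leader constraint then yields $f_i(x)\ge 0$. The final appeal to \cref{thm:ReprIncl} to upgrade $\mathscr T^{PRC}\subseteq\mathscr T^{CBL}_R$ to $\mathscr T^{PRC}_R\subseteq\mathscr T^{CBL}_R$ is also precisely the paper's closing step.
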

\begin{proof}
Suppose $S\in \mathscr T^{PRC}$. Then, 
\begin{equation*}
	S \quad=\quad \left\{ x\in\R^n:\,Ax \leq b,\, f_i(x) \geq 0\text{ for }i\in[n'] \right\}
\end{equation*}
 for some $n',\,A,\,b$ and polyhedral convex functions $f_i:\R^n\mapsto\R$ for $i\in[n']$. Further, let us explicitly write $f_i(x) = \max_{j=1}^{k_i} \left\{ \langle \alpha^{ij} ,x \rangle - \beta_{ij} \right\}$ for $j\in [k_i]$ for $i \in [n']$. Note for any $i$, $f_i(x) \geq 0$ if and only if $ \left\{ \langle \alpha^{ij},\,x \rangle - \beta_{ij} \right\} \geq 0  $ for each $j \in [k_i]$. Now, consider the following CBL set $S' \subseteq \R^n\times\R^{n'}$ where $(x,y) \in S'$ if
	\begin{align}
	Ax  \quad&\leq\quad b \notag \\
	y\quad&\geq\quad 0 \notag \\
	y\quad&\in\quad \arg\max_{y} \left\{ -\sum_{i=1}^{n'}y_i:\, y_i \geq \langle \alpha^{ij} ,x \rangle - \beta_{ij} \text{ for } j\in[k_i],\,i\in[n'] \right\}.\label{eq:t1}
\end{align}
    A key observation here is that in \eqref{eq:t1}, the condition $y_i \geq \langle \alpha^{ij} ,x \rangle - \beta_{ij}$ for all $j\in[k_i]$ is equivalent to saying $y_i \geq f_i(x)$. Thus \eqref{eq:t1} can be written as $	y\in \arg\min_{y} \left\{ \sum_{i=1}^{n'}y_i:\, y_i \geq f_i(x),\,i\in[n'] \right\}$. Since we are minimizing the sum of coordinates of $y$, this is equivalent to saying $y_i = f_i(x)$ for $i\in[n']$. However, we have constrained $y$ to be non-negative. So, if $S'' = \left\{ x\in\R^n:\,\exists\,y\in\R^{n'} \text{ such that } (x,\,y)\in S'\right\}$, it naturally follows  that $S=S''$. Thus $S\in \mathscr T^{CBL}_R$, proving the inclusion $\mathscr T^{PRC} \subseteq \mathscr T_R^{CBL}$. The result then follows from \cref{thm:ReprIncl}.
 \end{proof}

The next result shows that a given CBL-representable set can be represented as a LC-representable set. Again, we give an explicit construction.

\begin{lemma}\label{lem:CBPinLC}
The following holds:	
\begin{equation*}
\mathscr T_R^{CBL} \quad\subseteq\quad \mathscr T_R^{LC}.
\end{equation*}
\end{lemma}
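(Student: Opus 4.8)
\textbf{Proof plan for \cref{lem:CBPinLC}.} The plan is to take an arbitrary CBL set and replace the lower-level optimality condition ``$y \in \arg\max_y\{f^\top y : Dy \le g - Cx\}$'' by its KKT/LP-duality characterization, which is a system of linear inequalities together with a complementary slackness condition --- exactly the shape of an LC set. Since the lower-level problem is a \emph{linear} program (there are no integer variables in a CBL set), strong duality gives a clean exact characterization with no constraint-qualification worries: for fixed $x$, a feasible $y$ is optimal for $\max\{f^\top y : Dy \le g - Cx\}$ if and only if there is a dual vector $\lambda \ge 0$ with $D^\top \lambda = f$ and $\lambda^\top(g - Cx - Dy) = 0$.

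\textbf{Key steps, in order.} First, start with $S \in \mathscr T^{CBL}$, so $S = \proj_x S'$ where $S' = \{(x,y) : Ax + By \le b,\ y \in \arg\max_y\{f^\top y : Dy \le g - Cx\}\}$; by \cref{thm:ReprIncl} it suffices to show $S'$ (or an augmented set projecting onto it) is LC-representable. Second, introduce the dual variables $\lambda$ and the slacks, and write the candidate LC set in the variables $(x, y, \lambda, s, t)$ where $s := g - Cx - Dy \ge 0$ is the primal slack and we also need nonnegativity of $\lambda$. The core complementarity pair is $0 \le \lambda \perp s \ge 0$, i.e. $\lambda \ge 0$, $g - Cx - Dy \ge 0$, and $\lambda^\top(g - Cx - Dy) = 0$; the remaining conditions $D^\top\lambda = f$ (primal-dual feasibility for the dual) and $Ax + By \le b$ are plain linear (in)equalities, which fit into the ``$Ax \le b$'' slot of the LC-set definition (an equality being two inequalities). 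Third, to match the rigid format of a linear complementarity set --- which demands a \emph{single} block ``$0 \le x \perp Mx + q \ge 0$'' --- I need to assemble all variables into one vector, possibly splitting free variables ($x$, $y$, and any sign-unrestricted components) into differences of nonnegative variables, and pad the complementarity with trivial pairs $0 \le y_j^\pm \perp (\text{something} \ge 0)$ so every coordinate of the ambient vector appears on the left of a $\perp$; the standard trick is to complement a free/slack variable against a fresh nonnegative ``dummy'' that is forced to $0$ by the linear constraints, or simply to note (as the paper does for binary vectors) that one can always enlarge $M$, $q$ with zero rows. Finally, check that projecting this LC set back onto the $x$-coordinates recovers $S$, and invoke \cref{thm:ReprIncl} to pass from $\mathscr T^{CBL} \subseteq \mathscr T^{LC}_R$ to $\mathscr T^{CBL}_R \subseteq \mathscr T^{LC}_R$.

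\textbf{Main obstacle.} The mathematical content is essentially the LP strong-duality/complementary-slackness argument, which is routine. The fiddly part --- and the step I expect to need the most care --- is bookkeeping: massaging the duality system into the exact syntactic form of a linear complementarity set, where the complementarity must be between a variable vector $x$ and an affine image $Mx + q$ of \emph{that same} vector, all variables are present, and sign restrictions are handled by splitting free variables. One must be careful that after splitting $x = x^+ - x^-$, $y = y^+ - y^-$, etc., the complementarity conditions one actually wants ($\lambda \perp s$) are still expressible, and that the spurious pairs introduced for the split variables do not accidentally impose $x^+ \cdot x^- = 0$-type restrictions that change the set (they can be made harmless by complementing against identically-zero affine forms, or by observing that only the $\lambda \perp s$ pair carries real content and the rest can be complemented against slacks of the linear system). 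None of this is deep, but it is where a careless write-up would go wrong, so the proof should set up the variable ordering and the matrices $M, q, A, b$ explicitly.
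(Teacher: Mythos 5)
Your proposal is correct and follows essentially the same route as the paper: replace the lower-level LP optimality condition by its strong-duality/complementary-slackness characterization, put the single complementarity pair $0 \le \lambda \perp (g - Cx - Dy) \ge 0$ in the complementarity block and the remaining linear constraints in the ``$Ax \le b$'' slot, then project onto $(x,y)$ and invoke \cref{thm:ReprIncl}. If anything, you are more careful than the paper about the syntactic bookkeeping (splitting free variables and padding with harmless complementarity pairs), which the paper passes over with a ``Clearly, $S'\in \mathscr T_R^{LC}$.''
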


\begin{proof}
	Suppose $S\in \mathscr T^{CBL}$. Let us assume that the parameters $A,\,B,\,b,\,f,\,C,\,D,\,g$ that define $S$ according to \cref{Def:MIBL} has been identified. We first show that $S\in \mathscr T_R^{LC}$. Retaining the notation in \cref{Def:MIBL}, let $(x,\,y) \in S$. Then from \eqref{eq:BLP:Bilevel}, $y$ solves a linear program that is parameterized by $x$. The strong duality conditions (or the KKT conditions) can be written for this linear program are written below as \eqref{eq:BiKKTstart} - \eqref{eq:BiKKTend} and hence $S$ can be defined as $(x,\,y)$ satisfying the following constraints for some $\lambda$ of appropriate dimension:
	\begin{subequations}
	\begin{align}
	Ax+By \quad&\leq\quad b\\
	D^\top \lambda -f \quad&=\quad 0\label{eq:BiKKTstart}\\
	g -Cx - Dy \quad&\geq\quad 0\\
	\lambda\quad&\geq\quad 0\\
	(g-Cx-Dy)^\top \lambda \quad&=\quad 0. \label{eq:BiKKTend}
\end{align}\label{eq:BiKKT}
\end{subequations}
Consider $S'$ as the set of $(x,\, y,\,\lambda)$  satisfying
	\begin{alignat*}{30}
		\quad&\quad  \quad&&\quad b-Ax-By \quad&\geq&\quad 0\\
		\quad&\quad  \quad&&\quad D^\top \lambda - f \quad&\geq&\quad 0\\
		0\quad&\leq\quad \lambda \quad&\perp&\quad g-Cx-Dy \quad&\geq&\quad 0.
	\end{alignat*}%\label{eq:BiLCP}

Clearly, $S'\in \mathscr T_R^{LC}$. Let $S'' = \left\{ (x,\,y):  (x,\, y,\,\lambda) \in S' \right\}$. Clearly $S''\in\mathscr T_R^{LC}$. We now argue that $S=S''$. This follows from the fact that Lagrange multipliers $\lambda$ exist for the linear program in \eqref{eq:BLP:Bilevel} so that $(x,\,y,\,\lambda)\in S'$. 
Hence $\mathscr T^{CBL} \subseteq \mathscr T_R^{LC}$ follows and by \cref{thm:ReprIncl}, the result follows. % 
% \amitabh{Do we really need to introduce $y^+, y^-$? It seems there only use is to enforce $D^\top \lambda -f =0$. But these are just a set of linear constraints on $\lambda$ which are allowed anyway in a linear complementarity problem. It is also strange that we break up $y$ into $y^+, y^-$, but not $x$, even though their roles are symmetric in the KKT conditions. What am I missing?} \sri{No, and fixed this}
\end{proof}

Finally, to complete the cycle of containment, we show that a set representable using linear complementarity constraint can be represented as a polyhedral reverse convex set.

\begin{lemma}\label{lem:LCinPRC}
The following holds:	
\begin{equation*}
\mathscr T_R^{LC} \quad\subseteq\quad \mathscr T_R^{PRC}.
\end{equation*}
\end{lemma}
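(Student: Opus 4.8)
The plan is to show that the bare LC-set is already a PRC-set, and then invoke \cref{thm:ReprIncl} to lift the inclusion to representable sets. So let $S = \{x \in \R^n : x \ge 0,\ Mx+q \ge 0,\ x^\top(Mx+q) = 0,\ Ax \le b\}$ be an arbitrary linear complementarity set. The linear inequalities $x \ge 0$, $Mx + q \ge 0$, $Ax \le b$ are already of the form ``$A'x \le b'$'' required in the definition of a PRC set. The only non-polyhedral-looking constraint is the complementarity equation $x^\top(Mx+q) = 0$. The key observation is that on the polyhedron $\{x : x \ge 0,\ Mx+q \ge 0\}$ the quantity $x^\top(Mx+q) = \sum_{i=1}^n x_i(Mx+q)_i$ is a sum of products of two nonnegative quantities, hence is always $\ge 0$; therefore on this polyhedron the equation $x^\top(Mx+q) = 0$ is equivalent to the inequality $x^\top(Mx+q) \le 0$, which in turn is equivalent to saying that for every $i$, $x_i(Mx+q)_i \le 0$ — and combined with nonnegativity, to $\min\{x_i,\ (Mx+q)_i\} = 0$ for each $i$, i.e., $x_i = 0$ or $(Mx+q)_i = 0$.

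Thus $S$ is a finite union of polyhedra: for each subset $J \subseteq [n]$, form the polyhedron where $x_i = 0$ for $i \in J$ and $(Mx+q)_i = 0$ for $i \notin J$, intersected with $\{x \ge 0,\ Mx+q\ge 0,\ Ax \le b\}$; then $S$ is the union of these $2^n$ polyhedra. Hence $S \in \mathscr P$. By \cref{lem:PRCisPolyhedunion} we have $\mathscr P = \mathscr T^{PRC}$, so $S \in \mathscr T^{PRC}$. This establishes $\mathscr T^{LC} \subseteq \mathscr T^{PRC} \subseteq \mathscr T^{PRC}_R$, and then \cref{thm:ReprIncl} yields $\mathscr T^{LC}_R \subseteq \mathscr T^{PRC}_R$, as desired.

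Alternatively, and perhaps more in the spirit of the PRC definition (avoiding an explicit case split into $2^n$ pieces), one can exhibit $S$ directly as a PRC set: keep the linear constraints $x \ge 0$, $Mx+q\ge0$, $Ax\le b$, and encode the complementarity as $f(x) := -\,x^\top(Mx+q) \ge 0$. The catch is that $-x^\top(Mx+q)$ is quadratic, not a polyhedral convex function, so this does not literally fit the definition; one must instead use the piecewise-linear reformulation. Concretely, for each $i$ the condition ``$x_i = 0$ or $(Mx+q)_i = 0$'' on the nonnegative region is ``$\min\{x_i,(Mx+q)_i\} \le 0$'', equivalently ``$-\min\{x_i, (Mx+q)_i\} \ge 0$'', and $-\min\{x_i,(Mx+q)_i\} = \max\{-x_i,\,-(Mx+q)_i\}$ is a polyhedral convex function of $x$ (a max of two affine functions). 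Taking $f_i(x) = \max\{-x_i,\,-(Mx+q)_i\}$ for $i \in [n]$, together with the linear inequalities, gives $S$ literally as a PRC set. Either route works; I would likely present the second since it produces the PRC certificate with no union and no exponential blow-up.

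The main obstacle, such as it is, is purely a matter of care rather than depth: one must make sure that the sign argument (that $x^\top(Mx+q) \ge 0$ is automatic on the nonnegative region) is invoked, because without it the equation $x^\top(Mx+q)=0$ is genuinely not equivalent to the conjunction of the $\min$-conditions — it is the nonnegativity of each term $x_i(Mx+q)_i$ that lets a sum being zero be replaced by each summand being zero. After that, everything is bookkeeping, and the passage from sets to representable sets is handled uniformly by \cref{thm:ReprIncl}, exactly as in the proofs of \cref{lem:PRCinCBP} and \cref{lem:CBPinLC}.
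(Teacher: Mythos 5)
Your proposal is correct, and your preferred second route — encoding each complementarity condition via the polyhedral convex function $f_i(x) = \max\{-x_i,\,-(Mx+q)_i\}$ and keeping the linear constraints, then lifting via \cref{thm:ReprIncl} — is exactly the argument the paper gives. The first route (the $2^n$-polyhedra union combined with \cref{lem:PRCisPolyhedunion}) is also valid but is not needed.
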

\begin{proof}
	{Let $p,\,q\in\R$. Notice that $0\leq p \perp q \geq 0 \iff p\geq 0,\,q\geq 0,\, \max\{-p,\,-q\} \geq 0$. Consider any $S \in \mathscr T^{LC}$. Then $$S = \left\{ x:\, 0\leq x \perp Mx+q \geq 0,\,Ax\leq b \right\} = \left\{ x:\,Ax\leq b,\, x\geq0,\,Mx+q\geq 0,\, f_i(x) \geq 0 \right\},$$ where $f_i(x) = \max \{-x_i,\, -[Mx+q]_i\}$. Clearly, each $f_i$ is a polyhedral convex function and hence by definition $S$ is a polyhedral reverse-convex set. This implies $S\in \mathscr T^{PRC}$.  So, $\mathscr T^{LC}\subseteq \mathscr T^{PRC} \implies \mathscr T_R^{LC} \subseteq \mathscr T_R^{PRC}$.}%, where the last implication is by \cref{thm:ReprIncl}.
	%This implies $S\in \mathscr T^{PRC} \implies \mathscr T^{LC}\subseteq \mathscr T_R^{PRC} \implies \mathscr T_R^{LC} \subseteq \mathscr T_R^{PRC}$, where the last implication is by \cref{thm:ReprIncl}.}
\end{proof}

% \amitabh{I reworded the previous proof. Earlier, we appealed to \cref{thm:ReprIncl}. But I don't really see where this is needed. Please check.}\sri{Agreed. We don't need it here}

With the previous lemmata in hand we can now establish \cref{thm:RepEquiv}. %which proves the equivalence of polyhedral reverse convex sets, linear complementarity sets, continuous bilevel sets and finite unions of polyhedra.

\begin{proof}[Proof of \cref{thm:RepEquiv}]
Follows from \cref{lem:PRCisPolyhedunion,lem:PRCinCBP,lem:CBPinLC,lem:LCinPRC}.
\end{proof}

We turn our focus to establishing \cref{thm:CBPnotMIP}. This uses the following seminal result of \citet{Jeroslow1984}, which gives a geometric characterization of MILP-representable sets.
\begin{theorem}[\citet{Jeroslow1984}]\label{thm:JeroslowLowe}
	Let $T\in \MILPR(\Q)$. Then,
	\begin{equation*}
		T\quad=\quad P+C
	\end{equation*}
	for some $P \in \bar{\mathscr P}(\Q),\, C \in \mathscr T^{IC}(\Q)$.
\end{theorem}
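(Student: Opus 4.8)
The plan is to derive this (the ``only if'' direction of Jeroslow and Lowe's characterization) from the Minkowski--Weyl decomposition of rational polyhedra together with a fractional-part argument on conic multipliers. First I would put $T$ in normal form: unfolding the definition of $\MILPR(\Q)$ (and using the reduction of linear images to coordinate projections from \cref{rem:1}), there is a rational polyhedron $Q\subseteq\R^{d}\times\R^{p}$ with $T=\proj_x\!\left(Q\cap(\R^{d}\times\Z^{p})\right)$, where $\proj_x$ denotes the projection onto the output coordinates and the last $p$ coordinates carry the integrality constraints. By Minkowski--Weyl, $Q=\conv\{v^1,\dots,v^s\}+\cone\{r^1,\dots,r^t\}$ with all $v^i,r^j$ rational; scaling each $r^j$ by a suitable positive integer (which does not change $\cone\{r^1,\dots,r^t\}$) I may assume $r^j\in\Z^{d+p}$ for every $j$, so in particular the last $p$ coordinates of each $r^j$ are integral. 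This rescaling is the one place where rationality is essential.

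Next I would introduce the rational polytope
\[
  \Pi \;:=\; \conv\{v^1,\dots,v^s\} + \Big\{ \textstyle\sum_{j=1}^t \theta_j r^j : 0 \le \theta_j \le 1 \ \forall\, j \Big\}
\]
--- a Minkowski sum of a polytope and a zonotope, hence a rational bounded polytope --- and the rational integer cone $C_0:=\inte\cone\{r^1,\dots,r^t\}$, and then prove the key decomposition
\[
  Q\cap(\R^{d}\times\Z^{p}) \;=\; \big(\Pi\cap(\R^{d}\times\Z^{p})\big) + C_0 .
\]
The inclusion ``$\supseteq$'' is immediate: $\Pi\subseteq Q$, $C_0\subseteq\rec(Q)$, and every element of $C_0$ has integral last $p$ coordinates (an integer combination of integer vectors), so adding it preserves integrality. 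For ``$\subseteq$'', given $(u,z)\in Q$ with $z\in\Z^{p}$, write $(u,z)=\sum_i\lambda_iv^i+\sum_j\mu_jr^j$ with $\lambda_i\ge0$, $\sum_i\lambda_i=1$, $\mu_j\ge0$, and split $\mu_j=\floor{\mu_j}+(\mu_j-\floor{\mu_j})$ with $\floor{\mu_j}\in\Z_+$ and $\mu_j-\floor{\mu_j}\in[0,1)$. Then
\[
  (u,z) \;=\; \Big( \textstyle\sum_i \lambda_i v^i + \sum_j (\mu_j-\floor{\mu_j}) r^j \Big) + \sum_j \floor{\mu_j} r^j ,
\]
where the second summand lies in $C_0$ and has integral last $p$ coordinates; since $(u,z)$ does too, the first summand --- which lies in $\Pi$ by construction --- must also have integral last $p$ coordinates, hence lies in $\Pi\cap(\R^{d}\times\Z^{p})$.

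Finally I would harvest the conclusion. Since $\Pi$ is bounded, only finitely many integer vectors $\bar z\in\Z^{p}$ arise as the last-$p$-coordinate block of a point of $\Pi$; for each such $\bar z$ the slice $\{(u,\bar z)\in\Pi\}$ is a rational polytope, so $\Pi\cap(\R^{d}\times\Z^{p})$ is a finite union of rational polytopes, i.e.\ a member of $\bar{\mathscr P}(\Q)$. Projecting the displayed decomposition by $\proj_x$ and using that $\proj_x$ is linear (hence commutes with Minkowski sums, finite unions, and the operation $\inte\cone(\cdot)$) yields $T=P+C$ with $P:=\proj_x\!\big(\Pi\cap(\R^{d}\times\Z^{p})\big)\in\bar{\mathscr P}(\Q)$ and $C:=\proj_x(C_0)=\inte\cone\{\proj_x r^1,\dots,\proj_x r^t\}\in\mathscr T^{IC}(\Q)$; the degenerate case $T=\emptyset$ is trivial. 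I expect the ``$\subseteq$'' direction of the decomposition to be the crux --- it is the genuinely mixed-integer step and the one that breaks without rationality --- with the only other points needing care being that the ``box part'' of $\Pi$ really is a polytope and that $\Pi$ has only finitely many nonempty integer slices.
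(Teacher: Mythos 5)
Your argument is correct. Note, however, that the paper does not prove \cref{thm:JeroslowLowe} at all --- it is quoted as a known result of \citet{Jeroslow1984} (the paper later points to the proof in \cite[Theorem~4.47]{Conforti2014}) --- and your proof is essentially that standard argument: Minkowski--Weyl, integral rescaling of the rays, the bounded ``fundamental zonotope'' $\Pi$, the floor/fractional-part splitting of the conic multipliers, finiteness of the integer slices of $\Pi$, and projection. No gaps.
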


%While \cref{thm:CBPnotMIP} already follows from the theorems proved earlier \chris{I am not seeing where it is proved. Do we need this sentence?}\sri{I mean the fact that what is represented by CBP is a finite union of polyhedra which is NOT a finite union of polytope plus monoid}
We now give two concrete examples to establish \cref{thm:CBPnotMIP}. First, a set which is CBL-representable but not MILP-representable and, second, an example of a set which is MILP-representable but not CBL-representable.
\begin{proof}[Proof of \cref{thm:CBPnotMIP}]
\begin{figure}
\centering
	\includegraphics[width=0.5\textwidth]{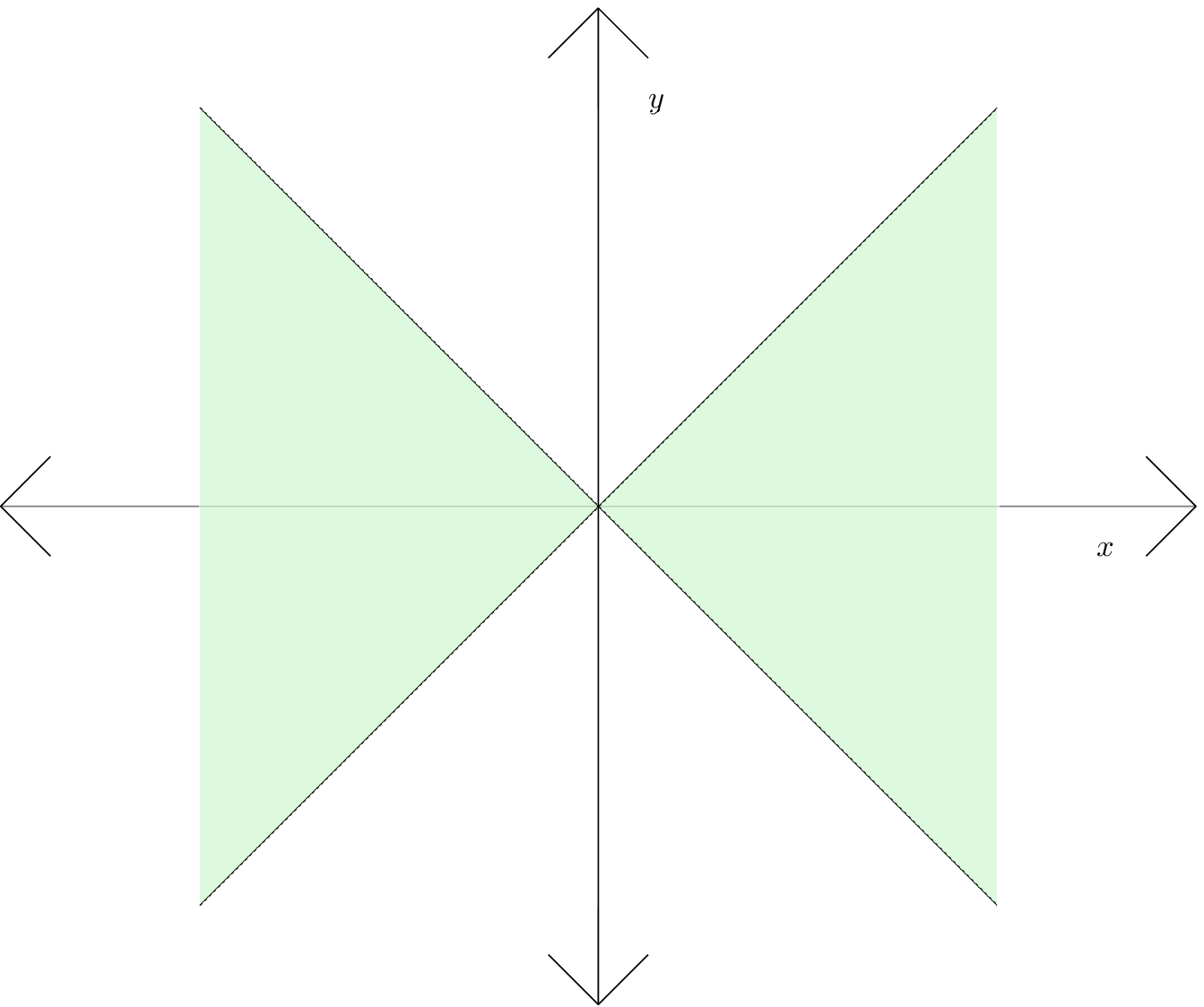}
	\caption{The set $T$ used in the proof of \cref{thm:CBPnotMIP}. 
  Note that $T\in\mathscr T_R^{CBL}\setminus T_R^{MI}$
  }
	\label{fig:Bilevelrepr}
\end{figure}
We construct a set $T \in \mathscr{T}^{CBL}_R$ as follows. Consider the following set $T'\in\mathscr T^{CBL}$ given by $(x,\,y,\,z_1,\,z_2) \in \R^4$ satisfying:
\begin{align*}
 	(y,z_1,z_2) \quad & \in\quad \arg\min \left\{ z_1 - z_2 : 
   \begin{array}{rl}	
 		z_1  &\geq x\\
 		z_1  &\geq -x\\
 		z_2 &\leq x\\
 		z_2 &\leq -x\\
 		y &\leq z_1\\
 		y &\geq z_2
 	\end{array}
   \right\}
\end{align*}
with no upper-level constraints. Consider $T = \left\{ (x,\,y)\in\R^2: (x,\,y,\,z_1,\,z_2)\in T' \right\}$ illustrated in \cref{fig:Bilevelrepr}. Note that $T \in \mathscr{T}^{CBL}_R$. We claim $T \not\in \MILPR$. Suppose it is. Then by \cref{thm:JeroslowLowe}, $T$ is the Minkowski sum of a finite union of polytopes and a monoid. Note that $\left\{ (x,\,x): x\in\R \right\}\subset T$ which implies $(1,\,1)$ is an extreme ray and $\lambda(1,\,1)$ should be in the integer cone of $T$ for some $\lambda > 0$. Similarly $\left\{ (-x,\,x):x\in\R \right\}\subset T$ which implies $(-1,\,1)$ is an extreme ray and $\lambda'(-1,\,1)$ should be in the integer cone of $T$ for some $\lambda' > 0$. Both the facts imply, for some $\lambda'' > 0$, the point $(0,\,\lambda'') \in T$. But no such point is in $T$ showing that $T\not \in \MILPR$. 

Conversely, consider the set of integers $\Z \subseteq \R$. Clearly $\Z\in \MILPR$ since it is the Minkowski sum of the singleton polytope $\{0\}$ and the integer cone generated by $-1$ and $1$. Suppose, by way of contradiction, that $\Z$ can be expressed as a finite union of polyhedra (and thus in $\mathscr T^{CBL}$ by \cref{thm:RepEquiv}). Then there must exist a polyhedron that contains infinitely many integer points. Such a polyhedron must be unbounded and hence have a non-empty recession cone. However, any such polyhedron has non-integer points. This contradicts the assumption that $\Z$ is a finite union of polyhedra.  %However, $\Z$ has infinitely many connected components and hence cannot a union of {\em finitely} many polyhedra. (\sri{@Amitabh, Please can you cite for this?}), which describes $\mathscr T_R^{CBL}$ by \cref{thm:RepEquiv}.
\end{proof}

These examples show that the issue of comparing MILP-representable sets and CBL-representable sets arises in how these two types of sets can become unbounded. MILP-representable sets are unbounded in ``integer'' directions from a single integer cone, while CBL-representable sets are unbounded in ``continuous'' directions from potentially a number of distinct recession cones of polyhedra. Restricting to bounded sets removes this difference.

\begin{proof}[Proof of \cref{thm:BoundedAllSame}]
	The first three equalities follow trivially from \cref{thm:RepEquiv}. To prove that $\bar{\mathscr{P}} = \bar{\MILPR}$, observe from \cref{thm:JeroslowLowe} that any set in $\MILPR$ is the Minkowski sum of a finite union of polytopes and a monoid. Observing that $T\in\MILPR$ is bounded if and only if the monoid is a singleton set containing only the zero vector, the equality follows. 
\end{proof}

\section{Representability of mixed-integer bilevel sets} % (fold)
\label{sub:proof_of_thm_repmibl}

The goal of this section is to prove \cref{thm:RepMIBL}. Again, we establish the result over a series of lemmata. In \cref{sub:mixed_integer_bilevel_sets_with_continuous_lower_level}, we show that the family of sets that are representable by bilevel linear polyhedral sets with integer upper-level variables is equal to the family of finite unions of MILP-representable sets (that is, $\mathscr {T}_R^{BLP-UI} = {\DMILPR}$). In \cref{sub:technical_lemmata} we establish an important intermediate result to proving the rest of \cref{thm:RepMIBL}, that the family ${\GDMILPR}$ is an algebra of sets; that is, it is closed under unions, intersections, and complements (\cref{thm:algebra}). This pays off in \cref{sub:general_mixed_integer_bilevel_sets}, where we establish the remaining containments of \cref{thm:RepMIBL} using the properties of this algebra and the characterization of value functions of mixed-integer programs in terms of Jeroslow functions, due to \citet{blair1995closed}. 

\subsection{Mixed-integer bilevel sets with continuous lower level} % (fold)
\label{sub:mixed_integer_bilevel_sets_with_continuous_lower_level}

First, we show that any BLP-UI set is the finite union of MILP-representable sets.

\begin{lemma}\label{lem:DMIPinUIMIBL}
The following holds:
\begin{align*}
\mathscr T_R^{BLP-UI} \quad&\subseteq\quad \DMILPR
\end{align*}
Moreover, the same inclusion holds in the rational case; i.e. $\mathscr T_R^{BLP-UI}(\Q) \subseteq \DMILPR(\Q)$.
\end{lemma}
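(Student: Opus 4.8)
The plan is to exploit the fact that when $\mathcal I_F=\emptyset$ the lower-level problem appearing in \eqref{eq:BLP:Bilevel} is an ordinary linear program, so all of the ``bilevel'' structure is already handled by \cref{thm:RepEquiv}; the only new ingredient is the upper-level integrality set $S^3$, which can simply be distributed over a finite union of polyhedra.

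Concretely, I would first take $S\in\mathscr T_R^{BLP-UI}$, so $S=L(T)$ for a linear transform $L$ and a BLP-UI set $T=S^1\cap S^2\cap S^3$ with data $A,B,b,f,C,D,g$ and $\mathcal I_L\subseteq[n_\ell]$, $\mathcal I_F=\emptyset$, as in \cref{Def:MIBL}. Put $\hat T:=S^1\cap S^2$; this is a \emph{continuous} bilevel linear set (it is $S^1\cap S^2\cap S^3$ for the same data but with $\mathcal I_L=\mathcal I_F=\emptyset$, so that $S^3=\R^{n_\ell+n_f}$). By \cref{thm:RepEquiv}, $\hat T\in\mathscr T^{CBL}\subseteq\mathscr T_R^{CBL}=\mathscr P$, hence $\hat T=\bigcup_{i=1}^k P_i$ for finitely many polyhedra $P_i\subseteq\R^{n_\ell+n_f}$. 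Distributing the intersection over the union, $T=\hat T\cap S^3=\bigcup_{i=1}^k (P_i\cap S^3)$, and each $P_i\cap S^3$ is precisely the set of mixed-integer points of the polyhedron $P_i$ with integrality imposed on the coordinates indexed by $\mathcal I_L$; thus $P_i\cap S^3\in\MILP$ and $T\in\DMILP\subseteq\DMILPR$. Since this shows $\mathscr T^{BLP-UI}\subseteq\DMILPR$, \cref{thm:ReprIncl} upgrades it to $\mathscr T_R^{BLP-UI}\subseteq\DMILPR$; equivalently, $S=L(T)=\bigcup_{i=1}^k L(P_i\cap S^3)\in\DMILPR$ by \cref{lem:DMIRisFiniteUnion}. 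For the rational statement I would run the identical argument using the rational version of \cref{thm:RepEquiv} recorded in \cref{rem:rational-version} (so that the $P_i$ are rational polyhedra and each $P_i\cap S^3$ is a rational mixed-integer set), together with the rational version of \cref{thm:ReprIncl}.

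I do not expect a genuine obstacle here; the only point requiring a little care is that intersecting a finite union of polyhedra with the coordinate-integrality set $S^3$ should be reorganized, via distributivity of intersection over union, as a finite union of ``polyhedra with integrality'' (each a member of $\MILP$) rather than as a single mixed-integer set, and that this finite-union-of-mixed-integer-sets structure is preserved by the outer linear transform $L$ --- which is exactly what \cref{thm:ReprIncl} and \cref{lem:DMIRisFiniteUnion} provide.
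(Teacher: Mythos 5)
Your proposal is correct and follows essentially the same route as the paper's proof: split off the upper-level integrality set, apply \cref{thm:RepEquiv} to recognize the continuous bilevel part as a finite union of polyhedra, distribute the intersection with the integrality constraints over the union to land in $\DMILP$, and finish with \cref{thm:ReprIncl} (with the rational case handled by the same argument on rational data). No gaps.
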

\begin{proof}
    Suppose $T\in \mathscr T^{BLP-UI}$. Then $T = \left\{ x: x\in T' \right\} \cap \left\{ x : x_j \in \Z,\,\forall\, j\in \mathcal I_L \right\}$, for some $T' \in \mathscr T^{CBL}$ and for some $\mathcal{I}_L$ (we are abusing notation here slightly because we use the symbol $x$ now to denote both the leader and follower variables in the bilevel problem).    %\chris{Should we use variables $(x,y)$ and not just $x$? Also maybe we should say ``there exists a nonempty set $\mathcal I_L$ such that''}\sri{Don't want to use $(x,y)$ separately because we do not appeal to lower level separately. So this is probably fine, no? Made the second suggestion}. 
    By \cref{thm:RepEquiv}, $T' \in \mathscr P$. Thus we can write $T' = \bigcup_{i=1}^k T'_i$ where each $T'_i$ is a polyhedron. Now $T = \left( \bigcup_{i=1}^k T'_i \right) \cap \left\{ x : x_j \in \Z,\,\forall\, j\in \mathcal I_L \right\} = \bigcup_{i=1}^k \left( \left\{ x : x_j \in \Z,\,\forall\, j\in \mathcal I_L \right\} \cap T'_i \right) $ which is in $\DMILP$ by definition. % . 
    %\chris{We don't need the ``representable'' here, right?}. \sri{The sets on the left are indeed without the ``representable'' part. But on the right, we need that.} 
    The result then follows from \cref{thm:ReprIncl}. The rational version involving $\mathscr T_R^{BLP-UI}(\Q)$ and $\DMILPR(\Q)$ holds by identical arguments restricting to rational data.
 \end{proof}

We are now ready to prove the reverse containment to \cref{lem:DMIPinUIMIBL} and thus establish one of the equivalences in \cref{thm:RepMIBL}.

\begin{lemma}\label{lem:UIMIBLinDMIP}
The following holds:	
\begin{equation*}
\DMILPR \quad\subseteq\quad \mathscr T_R^{BLP-UI}.
\end{equation*}
Moreover, the same inclusion holds in the rational case; i.e. $\mathscr T_R^{BLP-UI}(\Q) \subseteq \DMILPR(\Q)$.
\end{lemma}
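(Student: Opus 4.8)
I want to show $\DMILPR \subseteq \mathscr T_R^{BLP-UI}$. By \cref{lem:DMIRisFiniteUnion} it suffices to take an arbitrary finite union $T = \bigcup_{i=1}^k T_i$ with each $T_i \in \MILPR$ and produce a BLP-UI set whose projection is $T$. The natural strategy is a two-step reduction: first handle a \emph{single} MILP-representable set, then handle the union by introducing an integer "selector" variable in the upper level. Since a BLP-UI set is a CBL set intersected with integrality constraints on the upper-level variables only (as in the proof of \cref{lem:DMIPinUIMIBL}), and since by \cref{thm:RepEquiv} the continuous-bilevel machinery already captures all finite unions of polyhedra, the remaining work is purely about placing integer variables in the leader.

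\textbf{Step 1 (a single MILP-representable set).} Let $S \in \MILPR$, so $S = \proj_x\{(x,z)\in\R^{n}\times\R^{m}: z_j \in \Z\ \forall j\in\mathcal J,\ Ax+Bz\le b\}$ for some index set $\mathcal J\subseteq[m]$. The polyhedron $Q = \{(x,z): Ax+Bz\le b\}$ is trivially a CBL set (take, e.g., a lower-level problem that is a constant objective over $Q$, or simply note a polyhedron is in $\mathscr P = \mathscr T_R^{CBL}$ by \cref{thm:RepEquiv}). Now impose $z_j\in\Z$ for $j\in\mathcal J$: this is exactly the operation of forming a BLP-UI set from a CBL set, with $\mathcal I_L = \mathcal J$ applied to the $z$-coordinates (which play the role of leader variables here). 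Hence $\{(x,z): z_j\in\Z\ \forall j\in\mathcal J\}\cap Q$ is a BLP-UI set, and projecting onto $x$ (allowed, since $\mathscr T_R^{BLP-UI}$ is closed under linear transforms / projections, cf. \cref{rem:1}) gives $S$. So $\MILPR\subseteq \mathscr T_R^{BLP-UI}$.

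\textbf{Step 2 (finite unions via an integer selector).} Given $T = \bigcup_{i=1}^k T_i$ with each $T_i$ the projection of a mixed-integer set in a polyhedron $Q_i$ as above, introduce an auxiliary integer leader variable $\delta$ and continuous copies $z^1,\dots,z^k$, one per branch. The idea is to build a CBL set $T'$ in the variables $(x,\delta,z^1,\dots,z^k, \text{aux})$ enforcing: $\delta\in\{1,\dots,k\}$, and for the chosen index $i=\delta$ the point $(x,z^i)$ lies in $Q_i$ and the mixed-integer constraints are met. The membership "$(x,z^\delta)\in Q_\delta$" can be linearized using the fact that $\bigcup_i (\{\delta=i\}\times Q_i)$ is itself a finite union of polyhedra, hence in $\mathscr T_R^{CBL}$ by \cref{thm:RepEquiv}; intersecting with the integrality requirements $\delta\in\Z$ and $z^i_j\in\Z$ for the relevant $j$ puts us in $\mathscr T_R^{BLP-UI}$ (integrality on leader variables only). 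Projecting onto $x$ recovers $\bigcup_i T_i = T$. Thus $\DMILPR\subseteq\mathscr T_R^{BLP-UI}$. The rational version is obtained verbatim by restricting all data and transforms to $\Q$; note $k$ and the selector indices are automatically rational, so no new issue arises.

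\textbf{Main obstacle.} The one delicate point is Step 2: ensuring that "switching on" only the $\delta$-th branch does not leave the inactive copies $z^i$ ($i\ne\delta$) free to certify spurious membership. The clean fix is exactly the union-of-polyhedra observation — rather than trying to encode an "if $\delta=i$ then ..." implication by hand with big-$M$ constants (which would require boundedness and is fragile), one directly writes the set $\{(x,\delta,z^1,\dots,z^k): \delta=i,\ (x,z^i)\in Q_i,\ z^{i'}=0\ \forall i'\ne i\}$ for each $i$, takes the union over $i$, invokes \cref{thm:RepEquiv} to realize this union as a CBL set, and only then intersects with the integrality constraints on $\delta$ and the appropriate components of $z^i$. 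Because \cref{thm:RepEquiv} handles arbitrary (possibly unbounded) finite unions of polyhedra, no big-$M$ or boundedness assumption is needed, and the argument goes through for general data and, mutatis mutandis, for rational data.
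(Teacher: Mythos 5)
Your overall strategy is the same as the paper's: reduce to a finite union of mixed-integer points in polyhedra living in a common ambient space (via \cref{lem:DMIRisFiniteUnion}), realize the union of the underlying polyhedra as a CBL-representable set via \cref{thm:RepEquiv}, and then intersect with the integrality constraints to land in $\mathscr T_R^{BLP-UI}$. Your selector variable $\delta$ and the zeroed-out inactive copies are harmless but unnecessary: the construction in \cref{lem:DMIRisFiniteUnion} already places all the $\tilde T_i$ in one space, and \cref{thm:RepEquiv} handles the union directly without any branching device, which is exactly what the paper does.

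There is one concrete step you gloss over that the paper handles explicitly, and it is where your final sentence of Step 2 does not quite follow from the definitions. A BLP-UI set requires $\mathcal I_F = \emptyset$, i.e., integrality may be imposed \emph{only on upper-level variables}. When you invoke \cref{thm:RepEquiv} to say the union of polyhedra is in $\mathscr T_R^{CBL}$, you obtain a CBL set in some lifted space together with a linear image; you have no a priori control over whether the coordinates on which you need to impose integrality (your $\delta$ and the $z^i_j$) correspond to leader or to follower variables of that CBL set. In Step 1 you simply assert the $z$-coordinates ``play the role of leader variables,'' and in Step 2 you do not address the issue at all. The fix is short and is what the paper does: if some coordinate $z_i$ that must be integral happens to be a lower-level variable in the CBL formulation, introduce a new integer \emph{upper-level} variable $w_i$ and an upper-level constraint $w_i = z_i$; this forces $z_i \in \Z$ while keeping all integrality in the upper level. (Alternatively, you could inspect the explicit construction in \cref{lem:PRCinCBP} and note that it retains the original coordinates as upper-level variables, but either way the point needs to be made.) With that one sentence added, your argument is complete, and the rational version goes through as you say.
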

\begin{proof}
    Let $T\in \DMILPR$; then, by definition, there exist polyhedra $\tilde T_i$ such that $T$ is the linear image under the linear transform $L$ of the mixed integer points in the union of the $\tilde T_i$.  Let $\tilde T = \bigcup_{i=1}^k\tilde T_i$ and so $T = \left\{ L(y,z): (y,z) \in\tilde T\cap \left( \R^{n'}\times\Z^{n''} \right)  \right\}$. By \cref{thm:RepEquiv}, $\tilde T\in \mathscr T_R^{CBL}$.  If the $z \in \Z^{n''}$ are all upper level variables in the feasible region of the CBL that projects to $\tilde T$, then $T$ is clearly in $\mathscr T_R^{BLP-UI}$. If, on the other hand, some $z_i$ is a lower level variable then adding a new integer upper level variable $w_i$ and upper level constraint $w_i = z_i$ gives a BLP-UI formulation of a lifting of $\tilde T$, and so again $T \in \mathscr T_R^{BLP-UI}$. This proves the inclusion. The rational version involving $\mathscr T_R^{BLP-UI}(\Q)$ and $\DMILPR(\Q)$ holds by identical arguments restricting to rational data.
%
% Suppose $T\in T^{D-MI}_R$. Then $T = \bigcup_{i=1}^k T^i$ where for each $i\in[k]$, $T^i\in\MILP$. From \cref{thm:JeroslowLowe}, we can write $T^i = P^i + C^i$ where $P^i = \left( \bigcup_{j=1}^{\ell_i}P^{ij} \right) $ and $C^i = \inte\cone \left\{ r^{i1},\ldots, r^{it_i} \right\} $ where $P^{ij}$ are {\em polytopes} given by $A^{ij}x\leq b^{ij}$ and $r^{ij}$ are rational vectors. Let $f^{ij'}$ be a set of polyhedral convex functions such that $f^{ij'}(x) \geq 0  $ for all $j'$ if and only if $x \in P^i$. From \cref{lem:PRCisPolyhedunion}, we know such $f^{ij'}$ exist. Now consider the set:
% \begin{align*}
% 	x \quad&=\quad \sum_{i=1}^k x^i
% \end{align*}
	%\amitabh{Given the subtlety in the notations discussed after \cref{thm:RepMIBL}, I don't think we need to appeal to \cref{lem:DMIPcharac} in this proof anymore. The polyhedra $\tilde T_i$ in the first line of the proof exist by definition of $\DMILPR$. However, I think we have been careless about this subtlety, as to when we need to appeal to \cref{lem:DMIPcharac} and when things follow from the definition of $\DMILPR$. Sriram, can you please check carefully throughout? Perhaps we should not make the comment about this equivalence when we are defining the notation, but be careful to stick to the original meaning of the notation $\DMILPR$. I would even recommend removing $\DMILPR$ and ${\GDMILPR}$ from Table 1. When we use the fact that a finite union of MILP-representable sets is in $\DMILPR$, we should explicitly reference \cref{lem:DMIPcharac}.}
\end{proof}

%\amitabh{It looks like the rationality assumption is not needed in the above equivalence. If so, we should reword \cref{thm:RepMIBL}.} \chris{I kind of agree, but maybe let's discuss in the meeting.}

%%%%%%%%%%%%%%%%%%%%
	% \hrule
	% \vspace{6pt}
	% \sri{Below is for dealing with problems where lower level has integrality constraints}\\
	% \hrule
%%%%%%%%%%%%%%%%%%%%
\subsection{The algebra ${\GDMILPR}$ } % (fold)
\label{sub:technical_lemmata}

%\chris{Several key results in this section need rational data. I am wondering if things are more streamlined just only talking about the rational case here. I think in the other settings, it made sense to treat the general case because that generality filtered to our ``main'' results section. I think the non-rationality treated here never makes it to the main results section. Anyhow, it is something to discuss. I actually like keeping things carefully delineated for completeness, my only worry is that this careful handling of rationality in a piecemeal way across each result in this section possibly distracts. I think it is something to discuss (although I am leaning to keeping it in).}

We develop some additional theory for generalized polyhedra, as defined in \cref{s:notation}. The main result in this section is that the family of sets that are finite unions of sets representable by mixed-integer points in generalized polyhedra forms an algebra (in the sense of set theory). Along the way, we state some standard results from lattice theory and prove some key lemmata leading to the result. This is used in our subsequent proof in the representability of mixed-integer bilevel problems.

We first give a description of an arbitrary generalized polyhedron in terms of relatively open polyhedra. This allows us to extend properties we prove for relatively open polyhedra to generalized polyhedra.
\begin{lemma}\label{lem:genpolyIsUnionRelOpPoly}
Every generalized polyhedron is a finite union of relatively open polyhedra. If the generalized polyhedron is rational, then the relatively open polyhedra in the union can also be taken to be rational.
\end{lemma}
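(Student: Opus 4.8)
The plan is to prove that a generalized polyhedron $P = \{x : \langle a_i, x\rangle \le b_i, i \in I; \langle a_j, x \rangle < b_j, j \in J\}$ can be decomposed into finitely many relatively open polyhedra by partitioning according to which of the \emph{closed} constraints are tight. The natural tool is the standard fact that a regular polyhedron is the disjoint union of the relative interiors of its faces; I would like to push this through the open constraints as well.

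First, I would reduce to the case where $P$ is nonempty (the empty set is vacuously a finite union, e.g.\ of zero relatively open polyhedra). Let $Q = \cl(P)$, which is a regular polyhedron (this is noted in the excerpt after the definition of generalized polyhedra). The faces of $Q$ are finite in number, and $Q = \bigsqcup_{F} \relint(F)$ where $F$ ranges over the nonempty faces of $Q$. For each such face $F$, set $P_F := P \cap \relint(F)$. Since $P \subseteq Q$, we get $P = \bigsqcup_F P_F$, so it suffices to show each $P_F$ is a relatively open polyhedron (or empty).

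The key step is to analyze $P_F$. On $\relint(F)$, the closed constraints of $P$ that are tight at some (equivalently every) point of $\relint(F)$ hold with equality throughout $\relint(F)$, and the remaining closed constraints hold strictly on $\relint(F)$; this is exactly what it means for $F$ to be the face of $Q$ cut out by that subset of tight inequalities. So on $\relint(F)$, intersecting with the open constraints of $P$ (the $j \in J$ ones) we obtain $P_F = \relint(F) \cap \{x : \langle a_j, x\rangle < b_j, j \in J\}$. Now $\relint(F)$ is itself a relatively open polyhedron: it is $\aff(F)$ (a finite intersection of hyperplanes, i.e.\ closed halfspaces) intersected with the strict versions of the non-tight closed constraints. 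Intersecting a relatively open polyhedron with open halfspaces keeps it a generalized polyhedron, and one checks it still equals its own relative interior — the relative interior of an intersection of a relatively open set with open halfspaces, when nonempty, is the intersection itself, since adding open (strict) constraints does not create new relative boundary within the affine hull already fixed by $\relint(F)$. Hence $P_F$ is a relatively open polyhedron whenever it is nonempty. This is the step I expect to require the most care: verifying cleanly that $P_F$ equals its relative interior, i.e.\ that $\relint(P_F) = P_F$, rather than hand-waving it; the cleanest route is to note $\aff(P_F) = \aff(F)$ (the open constraints do not shrink the affine hull since they are strict and $P_F \ne \emptyset$), and then $P_F$ is open in $\aff(F)$ because it is the intersection of $\relint(F)$, which is open in $\aff(F)$, with finitely many open halfspaces.

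Finally, for the rational addendum: if $P$ is rational then $Q = \cl(P)$ is a rational regular polyhedron, its faces are rational, $\aff(F)$ is a rational affine subspace, and all the halfspaces involved in describing $\relint(F)$ and $P_F$ come from the rational data of $P$, so every $P_F$ in the union is rational. Assembling $P = \bigcup_F P_F$ over the finitely many faces $F$ of $Q$ with $P_F \ne \emptyset$ completes the proof.
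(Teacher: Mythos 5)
Your proof is correct, but it takes a different route from the paper's. The paper argues by induction on the affine dimension of the generalized polyhedron $P = \{x : Ax < b,\ Cx \le d\}$: after reducing to the full-dimensional case, it writes $P$ as the union of the ``all-strict'' piece $\{x : Ax<b,\ Cx<d\}$ (which is open, hence relatively open) together with the finitely many slices $P \cap \{x : \langle c^i, x\rangle = d_i\}$, each of which is a generalized polyhedron of strictly smaller affine dimension, and invokes the inductive hypothesis on those; the base case is a singleton. You instead give a direct, non-recursive decomposition: you partition $Q=\cl(P)$ into the relative interiors of its faces and show that each nonempty piece $P_F=\relint(F)\cap\{x:\langle a_j,x\rangle<b_j,\ j\in J\}$ is a relatively open polyhedron. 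Your argument is sound --- in particular you correctly isolate and handle the one delicate point, namely that $P_F=\relint(P_F)$, via $\aff(P_F)=\aff(F)$ and openness of $P_F$ in $\aff(F)$; and the facts you use (that a valid inequality tight at a point of $\relint(F)$ is tight on all of $F$, and the standard description of $\relint(F)$ as $\aff(F)$ intersected with the strict versions of the non-tight defining inequalities of $Q$, which include the weakened $j\in J$ constraints, not only the $i \in I$ ones) are all standard. The trade-off is that your proof leans on the face decomposition $Q=\bigsqcup_F\relint(F)$ and properties of relative interiors of faces, in exchange for which you get an explicit, one-shot indexing of the pieces by the faces of $\cl(P)$; the paper's induction is shorter and more elementary but produces the pieces only recursively. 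Either proof establishes the lemma, and both carry over verbatim to the rational case.
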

\begin{proof}
We proceed by induction on the affine dimension $d$ of the polyhedron. The result is true by definition for $d=0$, where the only generalized polyhedron is a singleton, which is also a relatively open polyhedron. For higher dimensions, let $P = \left \lbrace x:Ax < b,\,Cx \leq d \right \rbrace$ be a generalized polyhedron.  Without loss of generality, assume $P$ is full-dimensional, because otherwise we can work in the affine hull of $P$. We then write
    \begin{align*}
        P \quad=\quad \left \lbrace x : \begin{array}{c}Ax < b\\Cx<d \end{array} \right \rbrace \cup \bigcup_{i} \left ( P\cap \left \lbrace x: \langle c^i,\,x\rangle = d_i\right \rbrace\right ).
    \end{align*}
    The first set is a relatively open polyhedron and each of the sets in the second union is a generalized polyhedron of lower affine dimension, each of which are finite unions of relatively open polyhedra by the inductive hypothesis.
    
    The rational version also goes along the same lines, by restricting the data to be rational.
\end{proof}

From~\cite[Theorem 6.6]{Rockafellar1970}, we obtain the following:

\begin{lemma}[\citet{Rockafellar1970}]\label{lem:ProjIsRelOpen}
    Let $Q\subseteq\R^n$ be a relatively open polyhedron and $L$ is any linear transformation. Then $L(Q)$ is relatively open. If $Q$ and $L$ are both rational, then $L(Q)$ is also rational.
\end{lemma}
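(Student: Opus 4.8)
The plan is to derive the lemma directly from the quoted commutation property of relative interiors with linear maps, after first reducing an arbitrary relatively open polyhedron to the relative interior of its closure. (We may assume $Q \neq \emptyset$, the empty case being trivial.)

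First I would observe that any relatively open polyhedron $Q$ satisfies $Q = \relint(P)$, where $P := \cl(Q)$. Indeed, the excerpt already records that the closure of a relatively open polyhedron is a regular polyhedron, so $P$ is a regular polyhedron; and since $Q$ is convex, $\relint(P) = \relint(\cl(Q)) = \relint(Q) = Q$, using the standard convex-analysis fact that a convex set and its closure have the same relative interior, together with the defining property $Q = \relint(Q)$ of a relatively open polyhedron.

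Next I would invoke \cite[Theorem 6.6]{Rockafellar1970}: for any convex set $C$ and linear transformation $L$, one has $L(\relint(C)) = \relint(L(C))$. Applying this with $C = P$ gives
\[
    L(Q) \;=\; L(\relint(P)) \;=\; \relint\bigl(L(P)\bigr).
\]
Now $L(P)$ is the linear image of a regular polyhedron, hence itself a regular polyhedron by the classical projection theorem (Fourier--Motzkin elimination: $L(P)$ is the projection of the polyhedron $\{(x,Lx) : x \in P\}$). Writing $L(P) = \{z : Cz \le d\}$ and separating the implicit-equality rows (those $i$ with $c_i^\top z = d_i$ throughout $L(P)$) from the rest, its relative interior is $\{z : c_i^\top z = d_i$ on the implicit-equality rows, $c_j^\top z < d_j$ on the others$\}$, a finite intersection of hyperplanes and open halfspaces, i.e.\ a generalized polyhedron; moreover it equals its own relative interior. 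Hence $L(Q) = \relint(L(P))$ is a relatively open polyhedron.

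For the rational statement, each step preserves rationality: if $Q$ is rational then $P = \cl(Q)$ is a rational regular polyhedron; if in addition $L$ is rational, then $L(P)$ is a rational regular polyhedron, since Fourier--Motzkin elimination applied to rational data produces rational data; and the description of $\relint(L(P))$ above uses only the inequality data of $L(P)$, so $L(Q)$ is rational. The step I would be most careful about is the reduction $Q = \relint(\cl(Q))$ and the verification that the relative interior of a (rational) regular polyhedron is a (rational) relatively open polyhedron in the precise sense of the paper's definition; beyond that, the argument is an immediate application of the quoted results.
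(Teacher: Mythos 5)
Your argument is correct and is essentially the paper's own approach: the paper states this lemma as a direct consequence of \cite[Theorem 6.6]{Rockafellar1970} without further proof, and your write-up is exactly the natural fleshing-out of that citation (reduce $Q$ to $\relint(\cl(Q))$, commute $L$ with $\relint$, and check that the relative interior of a rational regular polyhedron is a rational relatively open polyhedron via the implicit-equality description). The details you flag as needing care are indeed the right ones to check, and they all go through.
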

%\begin{proof}
%    Without loss of generality, assume $Q$ is full-dimensional. If not, we can work in the affine hull of $Q$. Now, let $Q  = \left \lbrace x:Ax < b\right \rbrace$. Apply Fourier-Motzkin elimination to this set of inequalities $Ax < b$ to eliminate the variables that are projected out. The strict inequalities remain strict, ensuring that $Q'$ is also a relatively open polyhedron.
%\end{proof}

We now prove for relatively open polyhedra an equivalent result to the Minkowski-Weyl theorem for regular polyhedra.

\begin{lemma}\label{lem:MinkWeyl}
    Let $Q \subseteq \R^n$ be a relatively open polyhedron. Then $Q = P+R$ where $P$ is a relatively open {\em polytope} and $R$ is the recession cone $\rec\cl(Q)$. If $Q$ is rational then $P$ and $R$ can also be taken to be rational.
\end{lemma}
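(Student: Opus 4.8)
The plan is to mimic the classical Minkowski--Weyl decomposition for regular polyhedra, but keep careful track of which inequalities are strict. First I would use \cref{lem:genpolyIsUnionRelOpPoly}'s setup: a relatively open polyhedron $Q$ can be written as $Q = \{x : Ax < b,\ Cx \le d\}$ where, after passing to the affine hull of $Q$, the strict inequalities $Ax < b$ are exactly the ones that are \emph{not} implied with equality on $Q$, and $\{x : Cx \le d\}$ contains $\aff(Q)$ (so the closed constraints are, in fact, defining equalities of the affine hull). Concretely, I would take $\bar Q = \cl(Q) = \{x : Ax \le b,\ Cx \le d\}$, which is a regular polyhedron, and note that $Q = \relint(\bar Q)$ since $Q$ is relatively open and dense in its closure. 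Then $R := \rec(\bar Q)$ is well-defined.

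The core step is to invoke the ordinary Minkowski--Weyl theorem on the regular polyhedron $\bar Q$: write $\bar Q = \conv(v^1, \dots, v^m) + R$ where $v^1, \dots, v^m$ are the vertices (or, in the non-pointed case, generators of a polytope cross-section) of $\bar Q$ and $R = \rec(\bar Q) = \cone(r^1, \dots, r^s)$. Set $P_0 := \conv(v^1, \dots, v^m)$, a regular polytope. I then claim $Q = \relint(P_0) + R$, which would finish the proof with $P := \relint(P_0)$, a relatively open polytope. The inclusion $\relint(P_0) + R \subseteq \relint(\bar Q) = Q$ follows because adding a recession direction to a relative-interior point of a face-free summand stays in the relative interior (one can argue via: $\relint(P_0 + R) = \relint(P_0) + \relint(R)$ does not quite hold since $\relint(R)$ may be a proper subcone, so instead I would argue directly that $\relint(\bar Q) \supseteq \relint(P_0) + R$ using \cite[Theorem 6.1, Corollary 6.6.2]{Rockafellar1970} on relative interiors of sums, being careful that $\aff(\bar Q) = \aff(P_0) + \lin(R)$ so that the relative interiors compose correctly). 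For the reverse inclusion, take $x \in Q = \relint(\bar Q)$; write $x = p + r$ with $p \in P_0$, $r \in R$ by Minkowski--Weyl; I need to perturb so that $p$ lands in $\relint(P_0)$. Since $x$ is in the relative interior of $\bar Q$, for small $\eps > 0$ the point $x' := x + \eps(x - q)$ is still in $\bar Q$ for any $q \in P_0$; choosing $q$ to be a vertex and averaging, one shows $x$ itself can be written as $p' + r'$ with $p' \in \relint(P_0)$ — this is the one genuinely fiddly convexity argument, handling the boundary of $P_0$.

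For the recession-cone identification, I would note $\rec(\cl(Q)) = \rec(\bar Q) = R$ by construction, so the statement ``$R$ is the recession cone $\rec\cl(Q)$'' is automatic. The rational version is immediate: if $Q$ is rational then $\bar Q = \cl(Q)$ has rational data, the classical Minkowski--Weyl theorem produces rational vertices $v^i$ and rational extreme rays $r^j$ (since both come from solving rational linear systems), so $P_0$ and $R$ are rational, and hence $P = \relint(P_0)$ is a rational relatively open polytope.

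The main obstacle I expect is the relative-interior bookkeeping in the reverse inclusion $Q \subseteq \relint(P_0) + R$: one must show that every relative-interior point of $\bar Q$ decomposes with the polytope part strictly inside $P_0$, and the subtlety is that the naive decomposition from Minkowski--Weyl may place the polytope part on the boundary of $P_0$ even when $x$ is deep in the interior of $\bar Q$. The fix is a perturbation/averaging argument exploiting that $x \in \relint(\bar Q)$ lets us move a little in the direction of \emph{every} vertex of $P_0$ while staying in $\bar Q$, and then realize $x$ as a convex combination of such moved points. I would also double-check the degenerate cases (when $\bar Q$ is not pointed, so that ``vertices'' should be replaced by generators of a polytopal slice transversal to $\lin(R)$) and when $Q$ is itself bounded (then $R = \{0\}$ and there is nothing to prove beyond $Q = \relint(P_0)$).
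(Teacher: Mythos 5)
There is a genuine gap, and it is in the step you flagged as routine rather than the one you flagged as fiddly. Your central claim is $Q = \relint(P_0) + R$ with $P_0 = \conv(v^1,\dots,v^m)$ the convex hull of the vertices of $\cl(Q)$. This identity is false, and it is the \emph{forward} inclusion $\relint(P_0) + R \subseteq \relint(\cl(Q))$ that breaks. Take $Q = \{(x,y) : x > 0,\ y > 0\}$, the open positive quadrant: then $\cl(Q)$ has the single vertex $(0,0)$, so $P_0 = \{(0,0)\}$, $\relint(P_0) = \{(0,0)\}$, and $\relint(P_0) + R = \cone(e_1,e_2) = \cl(Q) \supsetneq Q$. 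Even in non-conical cases the same failure occurs, e.g.\ for $\cl(Q) = [0,1]\times\R_{\ge 0}$ one gets $\relint(P_0)+R = (0,1)\times[0,\infty) \ne (0,1)\times(0,\infty) = Q$. The underlying problem is that $P_0$ is typically lower-dimensional than $\aff(Q)$, so adding the \emph{full} closed cone $R$ (as the lemma requires --- $R$ must be $\rec\cl(Q)$, not $\relint(\rec\cl(Q))$) reintroduces boundary points of $\cl(Q)$. The Rockafellar identity you gesture at gives $\relint(P_0+R) = \relint(P_0) + \relint(R)$, which is a different (and here useless) statement. No perturbation argument on the reverse inclusion can repair this, because the asserted set equality itself is wrong.

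The paper's proof sidesteps exactly this trap by choosing a polytope that is full-dimensional in $\aff(Q)$: after projecting out the lineality space to reduce to the pointed case, it sets $P' := \cl(Q) \cap [-M,M]^n$ for $M$ large enough that the box contains in its interior all vertices of every shrunken polyhedron $Q_\epsilon = \{x : Ax \le b - \epsilon\mathbf 1\}$, and takes $P = \inte(P')$. The inclusion $\inte(P') + R \subseteq Q$ then holds because translating an interior point by a recession direction stays in the interior, and the reverse inclusion follows by writing $Q = \bigcup_{0<\epsilon\le 1} Q_\epsilon$ and applying Minkowski--Weyl to each $Q_\epsilon$, whose vertices all lie in $\inte(P')$. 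If you want to salvage your outline, replace $\conv(v^1,\dots,v^m)$ by such a box truncation of $\cl(Q)$ (or any relatively open polytope whose closure contains all vertices of $\cl(Q)$ in its relative interior and which is full-dimensional in $\aff(Q)$); the vertex hull alone cannot work.
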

\begin{proof} We first assume that $\dim(Q) = n$; otherwise, we can work in the affine hull of $Q$. Thus, we may assume that $Q$ can be expressed as $Q = \{x \in \R^n: Ax < b\}$ for some matrix $A$ and right hand side $b$. 

    We first observe that if $L$ is the lineality space of $\cl(Q)$, then $Q = \proj_{L^\perp}(Q) + L$ where $\proj_{L^\perp}(\cdot)$ denotes the projection onto the orthogonal subspace $L^\perp$ to $L$. Moreover, $\proj_{L^\perp}(Q)$ is a relatively open polyhedron by \cref{lem:ProjIsRelOpen} and its closure is pointed (since we projected out the lineality space). Therefore, it suffices to establish the result for full-dimensional relatively open polyhedra whose closure is a pointed (regular) polyhedron. Henceforth, we will assume $\cl(Q)$ is a pointed polyhedron.

Define $Q_\epsilon := \{x \in \R^n: Ax \leq b - \epsilon\mathbf 1\}$ and observe that $Q_0 = \cl(Q)$ and $Q = \cup_{0 < \epsilon \leq 1} Q_\epsilon$. Notice also that $\rec(Q_\epsilon) = \rec(\cl(Q)) = R$ for all $\epsilon \in \R$ (they are all given by $\{r \in \R^n: Ar \leq 0\}$). Moreover, since $\cl(Q)$ is pointed, $Q_\epsilon$ is pointed for all $\epsilon \in \R$. Also, there exists a large enough natural integer $M$ such that the box $[-M, M]^n$ contains, in its interior, all the vertices of $Q_\epsilon$ for every $0 \leq \epsilon \leq 1$. 

Define $P' := Q_0 \cap [-M,M]^n$ which is a regular polytope. By standard real analysis arguments, $\inte(P') = \inte(Q_0) \cap \inte([-M,M]^n)$. Thus, we have $\inte(P') + R \subseteq \inte(Q_0) + R = Q$. Next, observe that for any $\epsilon > 0$, $Q_\epsilon \subseteq \inte(P') + R$ because every vertex of $Q_\epsilon$ is in the interior of $Q_0$, as well as in the interior of $[-M, M]^n$ by construction of $M$, and so every vertex of $Q_\epsilon$ is contained in $\inte(P')$. Since $Q = \cup_{0 < \epsilon \leq 1} Q_\epsilon$, we obtain that $Q \subseteq \inte(P') + R$. Putting both inclusions together, we obtain that $Q = \inte(P') + R$. Since $P := \inte(P')$ is a relatively open polytope, we have the desired result. 

The rational version of the proof is along the same lines, where we restrict to rational data.
\end{proof}

We also observe below that generalized polyhedra are closed under Minkowski sums (up to finite unions). %\chris{Not really, right? We do not have the conclusion that a finite union of relatively open polyhedra is a gneeralized polyehdron, right? We only have the converse in \cref{lem:genpolyIsUnionRelOpPoly}.} \sri{We observe this below. And we never make a claim that a finite union of rel-open polyhedra is generalized polyhedron.}

\begin{lemma}\label{lem:mink-sum-gen-poly} Let $P, Q$ be generalized polyhedra, then $P + Q$ is a finite union of relatively open polyhedra. If $P$ and $Q$ are rational, then $P + Q$ is a finite union of rational relatively open polyhedra.
\end{lemma}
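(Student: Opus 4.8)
The plan is to reduce the Minkowski sum of two generalized polyhedra to Minkowski sums of relatively open polyhedra, and then to handle the latter via the Minkowski--Weyl decomposition established in \cref{lem:MinkWeyl}. First I would invoke \cref{lem:genpolyIsUnionRelOpPoly} to write $P = \bigcup_{i=1}^p P_i$ and $Q = \bigcup_{j=1}^q Q_j$, where each $P_i$ and $Q_j$ is a relatively open polyhedron (rational if $P, Q$ are). Since Minkowski sum distributes over unions, $P + Q = \bigcup_{i,j} (P_i + Q_j)$, so it suffices to prove that the Minkowski sum of two relatively open polyhedra is a finite union of relatively open polyhedra (preserving rationality).

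So fix relatively open polyhedra $Q_1, Q_2 \subseteq \R^n$. By \cref{lem:MinkWeyl}, write $Q_1 = A_1 + R_1$ and $Q_2 = A_2 + R_2$ where $A_1, A_2$ are relatively open polytopes and $R_1 = \rec\cl(Q_1)$, $R_2 = \rec\cl(Q_2)$ are (rational, if applicable) polyhedral cones. Then $Q_1 + Q_2 = (A_1 + A_2) + (R_1 + R_2)$. The cone $R_1 + R_2$ is again a (closed) polyhedral cone. The relatively open polytope part $A_1 + A_2$ is a Minkowski sum of two relatively open polytopes; I would argue directly that this is again a relatively open polytope — it is bounded, and it equals $\relint(\cl(A_1) + \cl(A_2))$ because the relative interior of a sum of convex sets is the sum of their relative interiors (a standard fact from \citet{Rockafellar1970}), and $\cl(A_1) + \cl(A_2)$ is a regular polytope. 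Call this relatively open polytope $A$.

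It then remains to show $A + (R_1 + R_2)$ is a finite union of relatively open polyhedra. The subtle point, and the one I expect to be the main obstacle, is that the Minkowski sum of a relatively open set with a closed cone need not be relatively open — e.g. adding a closed ray to an open segment transverse to it produces a set that is neither open nor closed along the ``seam''. The clean fix is: let $C := R_1 + R_2$ and express $\cl(A) + C$ as a regular polyhedron (Minkowski--Weyl for regular polyhedra), hence as a finite intersection of closed halfspaces; then decompose the generalized polyhedron $\relint(\cl(A)) + C$ — or more carefully, track which faces of $\cl(A) + C$ meet the interior contribution from $A$ — into relatively open polyhedra using \cref{lem:genpolyIsUnionRelOpPoly} again. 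Concretely, I would show $A + C$ is itself a generalized polyhedron: a point $a + c$ with $a \in \cl(A)$, $c \in C$ lies in $A + C$ iff $a$ can be chosen in $\relint(\cl(A))$, which amounts to requiring strict inequality in exactly those facet inequalities of $\cl(A) + C$ that are ``inherited'' from facets of $\cl(A)$ not containing any recession direction of $C$; this is a bookkeeping argument on facets. Once $A + C$ is exhibited as a generalized polyhedron, \cref{lem:genpolyIsUnionRelOpPoly} finishes it as a finite union of relatively open polyhedra, and the rational case is immediate since every operation above — \cref{lem:MinkWeyl}, \cref{lem:genpolyIsUnionRelOpPoly}, sums of rational cones and polytopes — preserves rationality.

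Alternatively, and perhaps more economically, I would avoid the facet bookkeeping entirely: write $A = \bigcup_{0 < \epsilon \le 1} A_\epsilon$ as a nested union of regular polytopes $A_\epsilon$ (shrinking $\cl(A)$ inward by $\epsilon$, exactly as in the proof of \cref{lem:MinkWeyl}), so that $A + C = \bigcup_{0<\epsilon\le 1}(A_\epsilon + C)$, a nested union of regular polyhedra. A nested union of the form $\bigcup_{0<\epsilon\le1} S_\epsilon$ with $S_\epsilon$ regular polyhedra all having the same recession cone $C$ and ``parallel'' facet structure is a generalized polyhedron (the closed facets are those whose defining inequality is uniform in $\epsilon$, the open ones those whose right-hand side is strictly monotone in $\epsilon$); invoking \cref{lem:genpolyIsUnionRelOpPoly} on this generalized polyhedron then yields the claim, again with rationality preserved throughout.
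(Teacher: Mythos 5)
Your opening reduction --- decompose $P$ and $Q$ into relatively open polyhedra via \cref{lem:genpolyIsUnionRelOpPoly} and distribute the Minkowski sum over the unions --- is exactly how the paper begins. But from that point the paper finishes in one line, using precisely the fact you cite only in passing: for convex sets, $\relint(C_1+C_2)=\relint(C_1)+\relint(C_2)$ (\cite[Corollary 6.6.2]{Rockafellar1970}). Applied directly to two \emph{relatively open} polyhedra $Q_1,Q_2$ (so $Q_i=\relint(Q_i)$), this gives $Q_1+Q_2=\relint(Q_1)+\relint(Q_2)=\relint(Q_1+Q_2)$, i.e.\ $Q_1+Q_2$ equals its own relative interior; since $\cl(Q_1+Q_2)=\cl(Q_1)+\cl(Q_2)$ is a regular polyhedron, $Q_1+Q_2$ is a relatively open polyhedron and you are done. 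You instead apply the identity only to the bounded parts $A_1+A_2$ after invoking \cref{lem:MinkWeyl}, and this detour manufactures the very obstacle you then struggle with: the recession cones $R_i$ are closed cones, hence \emph{not} relatively open, so of course ``relatively open set plus closed cone'' can fail to be relatively open. That failure cannot occur for the set you actually care about, because $A+C=(A_1+R_1)+(A_2+R_2)=Q_1+Q_2$ is a sum of two relatively open sets.

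Beyond being unnecessary, the two repairs you propose are not complete proofs. The facet-bookkeeping claim (strictness exactly on the facets of $\cl(A)+C$ ``inherited'' from facets of $\cl(A)$ not containing a recession direction of $C$) is asserted, not established. The nested-union argument is more delicate than you suggest: the inequality description of $A_\epsilon+C$ is \emph{not} obtained by translating the facet inequalities of $A_\epsilon$ and appending constraints for $C$ (the facet normals of a Minkowski sum come from the common refinement of the normal fans, and the right-hand sides are sums of support-function values), so the claim that the union over $\epsilon$ is read off by declaring the ``strictly monotone'' constraints open needs a genuine argument. I would rewrite the proof as the paper does: reduce to relatively open summands, then apply the relative-interior identity to those summands themselves.
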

\begin{proof} By \cref{lem:genpolyIsUnionRelOpPoly}, it suffices to prove the result for relatively open polyhedra $P$ and $Q$. The result then follows from the fact that $\relint(P+Q) = \relint(P) + \relint(Q) = P + Q$, where the second equality follows from~\cite[Corollary 6.6.2]{Rockafellar1970}. 

{The rational version follows along similar lines by using the rational version of \cref{lem:genpolyIsUnionRelOpPoly}.}
\end{proof}

%\begin{proof}
%    Since $\cl(Q)$ is a regular polyhedron, it can be written as $\bar P + R$ where $\bar P$ is a regular polytope. We now claim $Q = \relint(\bar P) + R$. Clearly, $P=\relint(\bar P)$ is a relatively open polytope and so proving this claim suffices to prove the lemma.
%
%    First, since $Q$ is relatively open, $Q = \relint(\cl(Q))$. Suppose $x = u+v \in P + R$ where $u \in P$ and $v \in R$. Since $u$ is in the relative interior of $\bar P$, there exists an $\bar\eps > 0$ such that $B(u,\bar\eps)\cap\aff(\bar P) \subseteq \bar P$. Now, observe that $B(u+v,\bar\eps)\cap\aff\cl(Q)\subseteq\cl(Q)$ implying that $x=u+v$ is in the relative interior of $\cl(Q)$. So, $x=u+v\in Q$.
%
%    Conversely, suppose $x\in Q \subseteq \cl Q$, we can write $x = u+v$ with $u\in \bar P$ and $v \in R$. We now claim that $u\in \relint (\bar P)$. This is because $x$ is in the relative interior of $Q$, there exists a $\bar\eps>0$ such that $B(x,\bar\eps)\cap \aff(Q) \subseteq Q$. We then have $B(u,\bar\eps)\cap \aff(\relint\bar P) \subseteq \relint\bar P$. \sri{I think this is incomplete a bit. Amitabh, please can you help complete this argument?} 
%\end{proof}

We now prove a preliminary result on the path to generalizing \cref{thm:JeroslowLowe} to generalized polyhedra.
\begin{lemma}\label{lem:genisrelOp}
    Let $Q\subseteq \R^n\times\R^d$ be a { rational} generalized polyhedron. {Then $Q\cap(\Z^n\times \R^d)$ is a union of finitely many sets, each of which is the Minkowski sum of a relatively open rational polytope and a rational monoid.}
\end{lemma}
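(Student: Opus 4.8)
The plan is to mimic the proof of the classical Jeroslow–Lowe theorem (\cref{thm:JeroslowLowe}) but carried out in the setting of relatively open polyhedra, leveraging the Minkowski–Weyl-type decomposition we already established in \cref{lem:MinkWeyl}. First I would reduce to the case where $Q$ is a \emph{relatively open} rational polyhedron: by \cref{lem:genpolyIsUnionRelOpPoly}, every rational generalized polyhedron is a finite union of rational relatively open polyhedra, and since $Q\cap(\Z^n\times\R^d)$ distributes over this union, it suffices to prove the statement for a single rational relatively open polyhedron $Q$.

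Next, apply \cref{lem:MinkWeyl} to write $Q = P_0 + R$, where $P_0$ is a rational relatively open polytope and $R = \rec(\cl(Q))$ is a rational polyhedral cone. The cone $R$ decomposes as a finite union of simplicial rational cones $R = \bigcup_j \cone\{r^{j,1},\dots,r^{j,n_j}\}$ with linearly independent integer generators (we may scale the generators to lie in $\Z^n\times\R^d$; more carefully, in $\Z^n\times\{0\}$ for the integer-constrained coordinates, which is where the monoid structure lives). Fixing one such simplicial cone, I would follow Jeroslow–Lowe: any point of $Q$ in that cone's ``slab'' can be written as $p + \sum_i \lambda_i r^{j,i}$ with $p\in P_0$ and $\lambda_i\ge 0$; splitting $\lambda_i = \lfloor\lambda_i\rfloor + \{\lambda_i\}$, the integer parts contribute an element of the monoid $\operatorname{int}\!\cone\{r^{j,1},\dots,r^{j,n_j}\}$ and the fractional parts, together with $p$, range over a bounded piece. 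The key point that makes the integrality $Q\cap(\Z^n\times\R^d)$ behave well is that for the coordinates constrained to be integer, adding an integer combination of integer generators preserves integrality, and the ``bounded piece'' $\{p + \sum_i \{\lambda_i\} r^{j,i}: p\in P_0, 0\le\{\lambda_i\}<1\}$ intersected with $\Z^n\times\R^d$ is itself a finite union of relatively open rational polytopes (this is where one does the same bounded-region argument as in the classical proof, now tracking that strict inequalities are preserved under intersection with an integer lattice in the appropriate coordinates).

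The main obstacle I anticipate is precisely this last point: making sure that intersecting a relatively open rational polytope with $\Z^n\times\R^d$ yields a \emph{finite union of relatively open rational polytopes} (rather than something with mixed open/closed facets that does not fit the stated form), and checking that the Minkowski-sum bookkeeping is compatible with \cref{lem:mink-sum-gen-poly}. Concretely, a relatively open polytope $P$ of the form $\{x : Ax < b\}$ (in its affine hull) intersected with $\Z^n\times\R^d$ need not be relatively open as a subset of its own affine hull once we fix integer coordinates — but after fixing each admissible integer value of the integer-constrained coordinates (finitely many, by boundedness of $P_0$), the remaining slice in the continuous coordinates is again a relatively open rational polytope. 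So the decomposition is: enumerate integer points in the (bounded) integer-coordinate projection of the bounded piece, and for each, take the corresponding continuous slice as a relatively open rational polytope; then Minkowski-sum with the monoid from the fixed simplicial cone. Taking the union over all simplicial cones and all such integer points gives the claimed finite union.

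Finally, I would assemble: $Q\cap(\Z^n\times\R^d) = \bigcup_{j}\bigcup_{v} \big(\Pi_{j,v} + M_j\big)$, where $\Pi_{j,v}$ is a relatively open rational polytope and $M_j = \operatorname{int}\!\cone\{r^{j,1},\dots,r^{j,n_j}\}$ is a rational monoid, which is exactly the asserted form. Rationality is preserved at every step since \cref{lem:genpolyIsUnionRelOpPoly}, \cref{lem:MinkWeyl}, the simplicial subdivision of a rational cone, and the enumeration of lattice points all have rational analogues already stated or standard. The one thing to be careful about throughout is not to conflate ``relatively open polytope'' (which is what \cref{lem:MinkWeyl} gives and what the statement demands) with ``polytope''; the bounded pieces must be carried as relatively open sets, and the closure/recession-cone manipulations must only ever be applied to $\cl(Q)$, never to $Q$ itself.
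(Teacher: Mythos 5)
Your proposal follows essentially the same route as the paper's proof: reduce to a relatively open polyhedron via \cref{lem:genpolyIsUnionRelOpPoly}, decompose $Q = P + R$ via \cref{lem:MinkWeyl}, split the cone coefficients into integer and fractional parts so that $Q\cap(\Z^n\times\R^d)$ becomes (bounded piece $\cap$ lattice) $+$ monoid, and then observe that the bounded piece meets only finitely many fibers $\{v\}\times\R^d$, each slice being a relatively open rational polytope. Your additions --- the simplicial subdivision of $R$ (not needed here, since any finite rational generating set of $R$ with integral first $n$ coordinates suffices) and the explicit slicing argument --- are harmless refinements of steps the paper treats tersely; the only literal slip is the parenthetical claim that generators lie in $\Z^n\times\{0\}$, when all that is needed (and all that is true in general) is that their first $n$ coordinates are integers.
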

\begin{proof}
    By \cref{lem:genpolyIsUnionRelOpPoly}, it is sufficient to prove the theorem where $Q$ is relatively open. By \cref{lem:MinkWeyl}, we can write $Q=P+R$ where $P$ is a { rational} relatively open polytope and $R$ is a cone {generated by finitely many rational vectors}. Set $T = P+X$, where $X = \left \lbrace \sum_{i=1}^k \lambda_i r^i: 0\leq \lambda_i\leq 1\right \rbrace$ where $r^i$ are the extreme rays of $R$ whose coordinates can be chosen as integers. For $u+v\in Q = P+R$, where $u\in P,\,v\in R$; let $v = \sum_{i=1}^{k}\mu_ir^i$. Define  $\gamma_i = \floor{\mu_i}$ and $\lambda_i = \mu_i - \gamma_i$. So, $u+v = (u+\sum_{i=1}^k\lambda_ir^i) + \sum_{i=1}^k\gamma_ir^i$, where the term in parentheses is contained in $T$ and since $\gamma_i \in \Z_+$, the second term is in a monoid generated by the extreme rays of $R$.
    %\chris{I feel like some result is needed here to justify this. You are tiling a cone by shifting its fundamental parallelepiped. This statement is not obviously true, is it?} \sri{Wrote the construction, how we indeed integral $\gamma_i$. Perhaps more obvious now?}. 
    Thus, we have $Q\cap(\Z^n\times\R^d) = (T\cap (\Z^n\times\R^d)) + \inte\cone(r^1,\ldots,r^k)$. Since $T$ is a finite union of {rational} relatively open {\em polytopes} by \cref{lem:mink-sum-gen-poly}, $T\cap (\Z^n\times\R^d)$ is a finite union of {rational} relatively open polytopes.% and the result follows by again applying \cref{lem:genpolyIsUnionRelOpPoly}. 
\end{proof}

The following is an analog of Jeroslow and Lowe's fundamental result (\cref{thm:JeroslowLowe}) to the generalized polyhedral setting.

\begin{theorem}\label{cor:jeroslow-lowe-generalized}
The following are equivalent:% \chris{Is this the analogue of Jeroslow and Lowe in the generalized polyhedral setting? If so, maybe don't call this a corollary and instead a Theorem or something. Since Jeroslow-Lowe is a theorem, it makes sense to call this a theorem. Also, to make the analogue clearer, it might be good to state the theorem to look more like \cref{thm:JeroslowLowe} to assist the reader.}
    \begin{enumerate}[(1)]
        \item{$S\in {\GDMILPR}(\Q)$}, 
        \item{$S$ is a finite union of sets, each of which is the Minkowski sum of a rational relatively open polytope and a rational finitely generated monoid}, and 
        \item{$S$ is a finite union of sets, each of which is the Minkowski sum of a rational generalized polytope and a rational finitely generated monoid}.
    \end{enumerate}
\end{theorem}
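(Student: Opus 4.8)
The plan is to prove the equivalence $(1) \Leftrightarrow (2) \Leftrightarrow (3)$ by a cycle of implications, closely mirroring the strategy behind the classical Jeroslow--Lowe theorem (\cref{thm:JeroslowLowe}) but using the generalized-polyhedral machinery of \cref{lem:genpolyIsUnionRelOpPoly,lem:MinkWeyl,lem:mink-sum-gen-poly,lem:genisrelOp}. The implication $(2) \Rightarrow (3)$ is immediate: a rational relatively open polytope is in particular a rational generalized polytope, so every set of the form described in $(2)$ is already of the form described in $(3)$. The implication $(3) \Rightarrow (1)$ is also straightforward: given a rational generalized polytope $P$ and a rational finitely generated monoid $M = \inte\cone\{r^1, \ldots, r^k\}$, I would exhibit $P + M$ as a ${\GMILPR}(\Q)$ set directly. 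Write $M = X + \inte\cone\{r^1,\ldots,r^k\}$ is not quite the trick; rather, lift to the generalized polyhedron $\widetilde Q = \{(z, \lambda_1, \ldots, \lambda_k) : z \in P, \lambda_i \geq 0\}$ living in $\R^n \times \R^k$, intersect with $\Z^k$ in the $\lambda$-coordinates, and project via the rational linear map $(z, \lambda) \mapsto z + \sum_i \lambda_i r^i$. Since $P$ is a rational generalized polytope, $\widetilde Q$ is a rational generalized polyhedron, its mixed-integer points form a ${\GMILPR}(\Q)$ set, and the image under a rational linear map is again ${\GMILPR}(\Q)$-representable by \cref{lem:DMIRisFiniteUnion} and the definition of representability; taking finite unions stays in ${\GDMILPR}(\Q)$.

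The substantive implication is $(1) \Rightarrow (2)$. Take $S \in {\GDMILPR}(\Q)$. By \cref{lem:DMIRisFiniteUnion}, $S$ is a finite union of sets in ${\GMILPR}(\Q)$, so it suffices to handle a single $S' \in {\GMILPR}(\Q)$; say $S' = L(Q \cap (\Z^n \times \R^d))$ for a rational generalized polyhedron $Q \subseteq \R^n \times \R^d$ and a rational linear map $L$. By \cref{lem:genisrelOp}, $Q \cap (\Z^n \times \R^d)$ is a finite union $\bigcup_j (T_j + M_j)$ where each $T_j$ is a rational relatively open polytope and each $M_j$ is a rational finitely generated monoid. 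Applying $L$ and using that linear maps commute with finite unions and Minkowski sums, $S' = \bigcup_j (L(T_j) + L(M_j))$. Now $L(M_j)$ is again a rational finitely generated monoid (the image of $\inte\cone\{r^1, \ldots, r^k\}$ is $\inte\cone\{L(r^1), \ldots, L(r^k)\}$), and $L(T_j)$ is a rational relatively open set by \cref{lem:ProjIsRelOpen}; moreover $L(T_j)$ is bounded since $T_j$ is, hence $L(T_j)$ is a rational relatively open polytope. This exhibits $S'$, and therefore $S$, in the form required by $(2)$.

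The main obstacle I anticipate is not any single hard step but rather keeping the bookkeeping honest at the seams: ensuring that "relatively open polytope" is genuinely preserved under linear images (which needs both \cref{lem:ProjIsRelOpen} for relative openness and a separate boundedness observation, and ideally that the image of a relatively open polytope is relatively open as a \emph{polyhedron}, i.e.\ still defined by finitely many affine constraints), and ensuring the rationality of all the intermediate objects threads through --- the rational refinements of \cref{lem:genisrelOp,lem:ProjIsRelOpen,lem:mink-sum-gen-poly} are stated precisely so that this goes through. One should also be slightly careful in $(3) \Rightarrow (1)$ that the lifted set $\widetilde Q$ really is a generalized polyhedron and not merely a generalized polyhedron up to finite union --- but since $P$ is a single generalized polytope and we are only adding nonnegativity constraints on fresh variables, $\widetilde Q$ is literally a finite intersection of open and closed halfspaces, so this is fine. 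Once these points are nailed down, the proof is a short assembly of the preceding lemmata.
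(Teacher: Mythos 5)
Your proof is correct and follows essentially the same route as the paper: $(1)\Rightarrow(2)$ via \cref{lem:DMIRisFiniteUnion,lem:genisrelOp,lem:ProjIsRelOpen} (including the same care about images of relatively open polytopes staying bounded, relatively open, and rational), $(2)\Rightarrow(3)$ trivially, and $(3)\Rightarrow(1)$ by showing the Minkowski sum of a rational generalized polytope and a rational finitely generated monoid lies in ${\GMILPR}(\Q)$. The only divergence is that where the paper defers this last implication to a routine adaptation of the original Jeroslow--Lowe argument, you give an explicit and correct lifting $\widetilde Q = \{(z,\lambda): z\in P,\ \lambda\ge 0\}$ with integrality on $\lambda$ and the rational projection $(z,\lambda)\mapsto z+\sum_i\lambda_i r^i$, which is a clean self-contained way to close that step.
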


%\amitabh{We seem to require finite unions in all three statements above because \cref{lem:genisrelOp} only guarantees a finite union of relatively open polyhedral monoids. My notes from the iPad also has finite unions in all three statements.} \sri{No, we have a different definition from what was on your iPad notes. We define the polyhedral monoid as a sum of monoid and finite union of polytopes. That fixes it? Or are you actually suggesting we prove that for finite union of polyhedral monoid, and (1) would then be $S\in \GDMILPR$. Also, this is then no longer a direct analogy of Jeroslow and Lowe.}

\begin{proof}
    $(1)\implies(2)$: Observe from \cref{lem:ProjIsRelOpen} that a { (rational)} linear transform of a { (rational)} relatively open polyhedron is a { (rational)} relative open polyhedron, and by definition of a { (rational)} monoid, a { (rational)} linear transform of a { (rational)} monoid is a { (rational)} monoid. Now from \cref{lem:DMIRisFiniteUnion,lem:genisrelOp}, the result follows. %\chris{I still feel this is a bit briskly stated. Should we give a bit more detail?} %This follows from \cref{lem:genisrelOp}  and the fact that linear transforms of relatively open polyhedra are relatively open polyhedra, and linear transforms of monoids are monoids. 
   % \sri{So we are interested in linear transform of Mixed integer points in a gen. polyhedron. By \cref{lem:genisrelOp}, this is a finite union of rel-op poly. monoid. But each rel-op polytope's linear transform is just another rel-op polytope and the monoid's linear transform is a monoid. Sounds like a complete proof, no? What other detail could be useful?}

\noindent $(2)\implies(3)$: This is trivial since every { (rational)} relatively open polyhedron is a { (rational)} generalized polyhedron.
  
\noindent $(3)\implies(1)$: This follows from the observation that {the Minkowski sum of a rational generalized polytope and a rational monoid} is a {rational} generalized mixed-integer representable set. A formal proof could be constructed following the proof of \cref{thm:JeroslowLowe} given in the original \citet{Jeroslow1984} or \cite[Theorem~4.47]{Conforti2014}. These results are stated for the case of regular polyhedra but it is straightforward to observe that their proofs equally apply to the generalized polyhedra setting with only superficial adjustments. We omit those minor details for brevity. 
%\chris{But they do not do for generalized sets, right? Also this argument seems to say that (3) is a \emph{union} of generalized MILP-representable sets (unless I am missing something), so should (1) be ${\GDMILPR}$? I am kind of lost here}\sri{No, the construction \citet{Conforti2014} is not for any ``closed'' polyhedra specially. If you look at the construction, the exact same construction except for replacing $\leq$s to $<$ will make it work without any further modification. It can be copied here fairly easily. But it is hardly enlightenting or shows anything new. So I chose to omit. Can add it, if you insist.}.
\end{proof}

\begin{remark}\label{rem:rationality-assumption-1} {Notice that the equivalence of (1) and (3) in~\cref{cor:jeroslow-lowe-generalized} is an analog of Jeroslow and Lowe's~\cref{thm:JeroslowLowe}. Moreover, the rationality assumption cannot be removed from \cref{lem:genisrelOp}, and hence cannot be removed from \cref{cor:jeroslow-lowe-generalized}. This is one of the places where the rationality assumption plays a crucial role; see also \cref{rem:rationality-assumption-2}.}\end{remark}

Now we prove that if we intersect sets within the family of generalized MILP-representable sets, then we remain in that family. 

\begin{lemma}\label{lem:MIintersect}
	Let $S$ and $T$ be sets in $\R^n$ where $S,\, T \in {\GMILPR}$. Then $S \cap T \in {\GMILPR}$. {The rational version holds, i.e., one can replace ${\GMILPR}$ by ${\GMILPR}(\Q)$ in the statement.}
\end{lemma}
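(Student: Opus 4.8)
The plan is to reduce everything to the case of relatively open polyhedra and then handle that case by a direct intersection argument. By definition, $S = L_S(S')$ and $T = L_T(T')$ where $S'$ and $T'$ are mixed-integer points in generalized polyhedra $P_S \subseteq \R^{n+p}$ and $P_T \subseteq \R^{n+q}$, and $L_S, L_T$ are linear transforms onto $\R^n$; using \cref{rem:1}, we may take $L_S, L_T$ to be projections after adding free variables and affine-subspace constraints (all families here are closed under those operations, and the resulting polyhedra remain generalized). So it suffices to show: if $A = \proj_x(P_A \cap (\Z^{n} \times \Z^{a} \times \R^{a'}))$ and $B = \proj_x(P_B \cap (\Z^{n} \times \Z^{b} \times \R^{b'}))$ for generalized polyhedra $P_A, P_B$, then $A \cap B \in \GMILPR$. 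Form the generalized polyhedron $P_A \times P_B$ in the product space, intersect it with the affine subspace $\{x^A = x^B\}$ equating the two copies of the projected $\R^n$-block — this is again a generalized polyhedron — take its mixed-integer points with the appropriate integrality pattern on all blocks, and project onto the common $x$. Since projection commutes with this construction and the integrality constraints on the two parameter blocks are independent, the projection is exactly $A \cap B$, which is therefore $\GMILPR$ by definition.

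The only subtlety is that a \emph{single} generalized-polyhedron-with-mixed-integer-points formulation is required (the lemma asserts $S \cap T \in \GMILPR$, not merely $\GDMILPR$), but the product-plus-diagonal construction above genuinely produces one such set, so there is no issue. The one genuinely non-trivial point to verify is that the class of generalized polyhedra is closed under the operations used — Cartesian product, intersection with a rational affine subspace, and linear projection onto a subset of coordinates. Closure under product and under intersection with affine subspaces is immediate from the definition (a finite intersection of open and closed halfspaces), since an affine equality is the intersection of two closed halfspaces and a product of halfspace-intersections is again a halfspace-intersection in the larger space. Closure of generalized polyhedra (or rather, of finite unions of relatively open polyhedra, which by \cref{lem:genpolyIsUnionRelOpPoly} is the same thing) under projection is exactly the content we can invoke: by \cref{lem:genpolyIsUnionRelOpPoly} write the generalized polyhedron as a finite union of relatively open polyhedra, and by \cref{lem:ProjIsRelOpen} each projects to a relatively open polyhedron, hence the union projects to a finite union of relatively open polyhedra, i.e. lies in $\GDMILPR$ — but since we only need the mixed-integer points and \cref{rem:1}-style liftings, we can keep a single-polyhedron description all the way through and only invoke projection at the very end where a finite-union description would already suffice for $\GMILPR$ up to the product construction.

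Concretely, the steps in order are: (i) replace $L_S, L_T$ by projections via \cref{rem:1}, so $S = \proj_x(Q_S \cap M_S)$, $T = \proj_x(Q_T \cap M_T)$ with $Q_S, Q_T$ generalized polyhedra and $M_S, M_T$ the respective mixed-integer lattices; (ii) build $Q := (Q_S \times Q_T) \cap \{(\,\cdot\,, x^S, \cdot\,, x^T) : x^S = x^T\}$, a generalized polyhedron in the product space; (iii) equip the product space with the integrality lattice $M$ that is $M_S$ on the $S$-coordinates and $M_T$ on the $T$-coordinates (consistently, since the shared $x$-block has the same integrality demands from both — if not, simply intersect the demands, which only strengthens integrality and is still a valid lattice); (iv) check the set identity $\proj_x(Q \cap M) = S \cap T$ — the forward inclusion is immediate, and for the reverse, given $x \in S \cap T$ lift it using a witness from $Q_S \cap M_S$ and a witness from $Q_T \cap M_T$, which are compatible precisely because they agree on $x$; (v) conclude $S \cap T \in \GMILPR$. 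The rational version is identical since every object built — products, the diagonal affine subspace, projections — has rational data whenever the inputs do, and \cref{lem:genpolyIsUnionRelOpPoly,lem:ProjIsRelOpen} both have rational versions. I expect step (iv), the set-identity verification, to be the only place needing care, and even there the argument is a routine ``glue two witnesses that agree on the shared block'' argument; the main conceptual obstacle — ensuring a single generalized polyhedron suffices rather than a finite union — is dissolved by the product-plus-diagonal trick.
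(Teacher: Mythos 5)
Your proposal is correct and takes essentially the same route as the paper: the paper invokes the lifting construction of \cref{lem:DMIRisFiniteUnion} to place $S$ and $T$ in a common ambient space with a shared linear map and disjoint auxiliary blocks, which is exactly your product-plus-diagonal construction, and then intersects upstairs. Your write-up merely makes explicit the witness-gluing verification that the paper dismisses as ``easy to see.''
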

\begin{proof}
  %  Let $S = \left \lbrace Lx: x \in\R^{n_1}\times\Z^{n_2}: A^1x \leq b^1,\,\hat A^1x < \hat b^1\right \rbrace$ and $T = \left \lbrace Lx: x \in\R^{n_1}\times\Z^{n_2}: A^2x \leq b^2,\,\hat A^2x < \hat b^2\right \rbrace$. Here $L$ corresponds to the linear operation of projection. 

  Using the same construction in \cref{lem:DMIRisFiniteUnion}, one can assume that there is a common ambient space $\R^{n_1}\times \R^{n_2}$ and a common linear transformation $L$ such that
  \begin{align*}
  S &= \left \lbrace L(x): x \in\R^{n_1}\times\Z^{n_2}: A^1x \leq b^1,\,\hat A^1x < \hat b^1\right \rbrace \\ 
  T &= \left \lbrace L(x): x \in\R^{n_1}\times\Z^{n_2}: A^2x \leq b^2,\,\hat A^2x < \hat b^2\right \rbrace.
  \end{align*}
  Now, it is easy to see $S \cap T \in {\GMILPR}$.
  
  {The rational version follows by restricting the linear transforms and data to be rational.}
\end{proof}

%We now state the following lemma, which shows that finite unions of sets in ${\GMILPR}$ are all in ${\GDMILPR}$. %This closely follows the analogous result in \cref{lem:DMIPcharac} for regular sets.
%\begin{lemma}\label{lem:GDMIPcharac}
%	{Let $T \subseteq \R^n$ be a finite union of sets in ${\GMILPR}$. Then there exists $k,\,n',\,n'',\,\text{and}$ generalized polyhedra $\tilde T_i\in\R^{n'+n''}$ for $i\in[k]$ such that $T = \left\{ Lx : x\in \left( \bigcup_{i=1}^k\tilde T_i  \right) \bigcap \left( \R^{n'}\times\Z^{n''} \right)  \right\}$ where $L$ is a linear transform, i.e., $T \in \hat{\mathscr T^{D - MI}_R}$.}
%\end{lemma}
%\begin{proof}
%	We use the same construction as in the proof of \cref{lem:DMIPcharac} can be used here, and the proof will follow.
%\end{proof}

An immediate corollary of the above lemma is that the class ${\GDMILPR}$ is closed under finite intersections.

\begin{lemma}\label{lem:DMIIntersect}
	Let $S$ and $T$ be sets in $\R^n$ where $S,\, T \in {\GMILPR}$. Then $S\cap T \in {\GDMILPR}$. {The rational version holds, i.e., one can replace ${\GDMILPR}$ by ${\GDMILPR}(\Q)$ in the statement.}
\end{lemma}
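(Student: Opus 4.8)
The plan is to reduce the claim to \cref{lem:MIintersect} via the distributive law of intersection over union; there is essentially no new idea required. First I would unpack the hypothesis: a set in ${\GDMILPR}$ is, by \cref{lem:DMIRisFiniteUnion}, a finite union of sets in ${\GMILPR}$. So I would write $S = \bigcup_{i=1}^k S_i$ and $T = \bigcup_{j=1}^\ell T_j$ with each $S_i, T_j \in {\GMILPR}$.

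Next I would apply distributivity, $S \cap T = \bigl(\bigcup_{i=1}^k S_i\bigr) \cap \bigl(\bigcup_{j=1}^\ell T_j\bigr) = \bigcup_{i=1}^k \bigcup_{j=1}^\ell (S_i \cap T_j)$, and invoke \cref{lem:MIintersect} to conclude that each of the $k\ell$ sets $S_i \cap T_j$ lies in ${\GMILPR}$. Hence $S \cap T$ is a finite union of sets in ${\GMILPR}$, i.e., $S \cap T \in {\GDMILPR}$, which is the assertion. (If one reads the lemma's hypothesis literally as $S,T\in{\GMILPR}$, the statement is even more immediate, since ${\GMILPR}\subseteq{\GDMILPR}$ trivially; but the substantive content, and the one the surrounding text announces, is closure of ${\GDMILPR}$ under finite intersections, obtained exactly as above by iterating the two-set case.)

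For the rational version I would note that \cref{lem:MIintersect} has a rational counterpart and that neither the finite unions nor the set-theoretic identity $\bigcup_i S_i \cap \bigcup_j T_j = \bigcup_{i,j}(S_i\cap T_j)$ introduces any new data, so the same argument yields $S\cap T \in {\GDMILPR}(\Q)$ whenever $S,T\in{\GDMILPR}(\Q)$. The only point requiring any care — and it is bookkeeping rather than a genuine obstacle — is that to apply \cref{lem:MIintersect} to a given pair $S_i, T_j$ one must first place them in a common ambient space with a common linear transform; this is handled verbatim by the lifting construction already used in the proof of \cref{lem:DMIRisFiniteUnion}, so nothing further is needed.
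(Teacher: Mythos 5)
Your argument is exactly the paper's: decompose $S$ and $T$ as finite unions of sets in ${\GMILPR}$ via \cref{lem:DMIRisFiniteUnion}, distribute the intersection over the unions, and apply \cref{lem:MIintersect} to each pair, with the rational case following identically. Your parenthetical observation about the hypothesis is also apt --- the paper's proof likewise treats $S,T$ as general elements of ${\GDMILPR}$, which is the reading the surrounding text intends.
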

\begin{proof}
	By \cref{lem:DMIRisFiniteUnion}, $S = \bigcup_{i=1}^k S_i$ and $T = \bigcup_{i=1}^\ell T_i$ where $S_i,\,T_i \in {\GMILPR}$. Now $S\cap T = \left (\bigcup_{i=1}^k S_i \right ) \cap \left (\bigcup_{i=1}^\ell T_i \right )$ $= \bigcup_{i=1}^{k}\bigcup_{j=1}^{\ell}\left ( S_i\cap T_j\right )$. But from \cref{lem:MIintersect}, $S_i\cap T_j \in {\GMILPR}$. Then the result follows from \cref{lem:DMIRisFiniteUnion}.
	
	{The rational version follows from the rational versions of \cref{lem:DMIRisFiniteUnion,lem:MIintersect}.}
\end{proof}

To understand the interaction between generalized polyhedra and monoids, we review a few standard terms and results from lattice theory. We refer the reader to \citep{Schrijver1998,Conforti2014,Barvinok2002} for more comprehensive treatments of this subject.

% \begin{Def}[Subgroup and Lattice]
% 	
% \begin{enumerate}
% 	\item 
% 	\item 
% 	\item 
% \end{enumerate}
% 
% \end{Def}\amitabh{Sometimes the Lemma below is used as a {\em definition} of a lattice, as opposed to the subgroup way of defining like above. I think for our purposes as well, we could do this. I don't think we use the subgroup way of defining anywhere. Notice also that the lemma below is not explicitly referenced anywhere in the text.} \sri{I see! I copied this verbatim from the notes I had from your class. }

% \begin{lemma}
% 	
% \begin{equation*}
% 	x\in \Lambda \quad\iff\quad x = \sum_{i=1}^r\lambda_i d^i,
% \end{equation*}
% 
% \end{lemma}
% \begin{proof}
% 	.
% \end{proof}

\begin{Def}[Lattice]
	Given a set of linearly independent vectors $d^1,\ldots,d^r\in\R^n$, the lattice generated by the vectors is the set
	\begin{equation}
		\Lambda\quad =\quad \left \lbrace x: x=\sum_{i=1}^r \lambda_id^i, \lambda_i \in \Z\right \rbrace
	\end{equation}
	We call the vectors $d^1,\dots,d^r$ as the generators of the lattice $\Lambda$ and denote it by $\Lambda = Z(d^1,\dots,d^r)$.
\end{Def}

Note that the same lattice $\Lambda$ can be generated by different generators.

\begin{Def}[Fundamental parallelepiped]
	Given $\Lambda = Z(d^1,\ldots,d^r) \subseteq\R^n$, we define {\em the fundamental parallelepiped} of $\Lambda$ (with respect to the generators $d^1,\ldots,d^r$) as the set 
	\begin{equation*}%\label{eq:define-parallelpiped}
		\Pi_{\{d^1, \ldots, d^r\}} \quad:=\quad \left\{ x\in\R^n : x = \sum_{i=1}^r\lambda_id^i,\, 0\leq \lambda_i < 1 \right\}.
	\end{equation*}
    %We drop explicit reference to the generators when they are clear from the context. %Also, we alternatively subscript $\Pi$ by the set of generators of $\Lambda$. \amitabh{Why have both conventions? Lets choose and stick to it? I vote to index by the symbol of the lattice.} \sri{Sure. I prefer the set of generators as that naturally reminds that the fundamental parallelepiped is not an object associated with the lattice, but only with its generators. I have ensured that we are consistent with this notation below. Let me know if this is fine.} \chris{I like Sriram's fix}
\end{Def}

We prove the following two technical lemmata, which are crucial in proving that {${\GDMILPR}(\Q)$} is an algebra. The lemmata prove that $(P+M)^c$, where $P$ is a polytope and $M$ is a { finitely generated} monoid, is in ${\GDMILPR}$ {(and a corresponding rational version is true)}. The first lemma proves this under the assumption that $M$ is generated by linearly independent vectors. The second lemma uses this preliminary results to prove it for a general monoid.

%\chris{I feel this lemma is now very hard to understand because it is overloaded with notation. Can we give some ``handwavy'' idea for how things work either before or after the proof? We can drop the whole ``extra vector'' stuff with GSO and maybe give the reader a clearer sense of how things work, possibly using pictures. Also a word or two on why it is easier to first deal with linear independent generators could be helpful.}

The proof of the lemma below where $M$ is generated by linearly independent vectors is based on the following key observations. 
\begin{enumerate}
	% \item{The fundamental parallelepiped corresponding the generators of $M$ can be scaled by an integer to contain the polytope.}
	\item[(i)] {If the polytope $P$ is contained in the fundamental parallelepiped $\Pi$ of the lattice generated by the monoid $M$, then the complement of $P+M$ is just $(\Pi\setminus P) + M$ along with everything outside $\cone(M)$. 
	\item[(ii)] {The entire lattice can be written as a disjoint union of finitely many cosets with respect to an appropriately chosen sublattice. The sublattice is chosen such that its fundamental parallelepiped contains $P$ (after a translation). Then combining the finitely many cosets with the observation in (i), we obtain the result.} }
\end{enumerate}
The first point involving containment of $P$ inside $\Pi$ is needed to avoid any overlap between distinct translates of $P$ in $P+M$. The linear independence of the generators of the monoid is important to be able to use the fundamental parallelepiped in this way. The proof also has to deal with the technicality that the monoid (and the lattice generated by it) need not be full-dimensional.

\begin{lemma}\label{lem:PandLinIndMonoid}
	Let $M\subseteq\R^n$ be a monoid generated by a {\em linearly independent} set of vectors $\mathcal {M} = \{m^1,\ldots,m^k\}$ and let $P$ be a generalized polytope. Then $(P+M)^c \subseteq {\GDMILPR}$. {Moreover, if $P$ and $M$ are both rational, then $(P+M)^c \subseteq {\GDMILPR}(\Q)$.}
\end{lemma}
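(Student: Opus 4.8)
The plan is to reduce to the case where $M$ is full-dimensional and then use the coset decomposition outlined in the observations preceding the lemma. First I would handle the dimensionality issue: let $L = \operatorname{span}(\mathcal{M})$ be the linear span of the monoid generators, so $L$ is a $k$-dimensional subspace of $\R^n$. Writing $\R^n = L \oplus L^\perp$, I would decompose the polytope $P$ according to this splitting. The key point is that $(P+M)$ is ``cylindrical'' only in the $L$-directions that $M$ reaches, so in the $L^\perp$-direction the complement just picks up $\operatorname{proj}_{L^\perp}(P)^c \times L$ (which is a generalized polyhedron crossed with a subspace, hence in $\GDMILPR(\Q)$ by \cref{lem:genpolyIsUnionRelOpPoly} and \cref{thm:algebra}-type reasoning already available via \cref{cor:jeroslow-lowe-generalized}), together with something supported on $L$. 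Since $\GDMILPR(\Q)$ is closed under finite intersections (\cref{lem:DMIIntersect}) and, as I will establish by the end of this argument, under complements, it suffices to work entirely inside $L$ with $M$ now full-dimensional in $L$.

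Next, working in $L \cong \R^k$ with $M = \operatorname{intcone}(m^1,\dots,m^k)$ full-dimensional and $\Lambda = Z(m^1,\dots,m^k)$ the generated lattice with fundamental parallelepiped $\Pi = \Pi_{\{m^1,\dots,m^k\}}$, I would first treat the special case where $P$ (after a suitable lattice translation, which does not change membership in $\GDMILPR(\Q)$ since translations by rational vectors are rational linear transforms) is contained in $\Pi$. In that case observation (i) gives
\begin{equation*}
(P+M)^c \;=\; \big((\Pi \setminus P) + M\big) \;\cup\; \big(\cone(M)\big)^c \;\cup\; \text{(boundary corrections)},
\end{equation*}
where $\Pi \setminus P$ is a finite union of generalized polytopes (a polytope minus a generalized polytope: complement within $\Pi$ via \cref{lem:DMIIntersect} and the algebra of generalized polyhedra, which is elementary), so $(\Pi\setminus P)+M$ is a finite union of Minkowski sums of generalized polytopes and the rational finitely generated monoid $M$, hence in $\GDMILPR(\Q)$ by \cref{cor:jeroslow-lowe-generalized}; and $\cone(M)^c$ is a finite union of generalized polyhedra (complement of a simplicial cone), hence in $\GDMILPR(\Q)$. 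Care is needed here with which faces of $\Pi$ and $\cone(M)$ are open versus closed so that the decomposition is an honest partition — this is exactly what generalized (not merely regular) polyhedra are for, and it is the technical bookkeeping I expect to be fiddly but routine.

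For the general case, I would invoke observation (ii): choose a sublattice $\Lambda' \subseteq \Lambda$ — concretely $\Lambda' = Z(N m^1,\dots,N m^k)$ for a large enough integer $N$ — whose fundamental parallelepiped $\Pi'$ is a scaled copy of $\Pi$ large enough to contain a translate of $P$. Then $\Lambda = \bigsqcup_{j=1}^{N^k}(v^j + \Lambda')$ is a disjoint union of finitely many cosets, and correspondingly $M$ decomposes (up to a bounded correction) so that $P + M$ becomes a finite union of translates $P + v^j + M'$ with $M' = \operatorname{intcone}(Nm^1,\dots,Nm^k)$, each of which falls under the special case just handled. Since $\GDMILPR(\Q)$ is closed under finite intersections (\cref{lem:DMIIntersect}), De Morgan then turns the complement of this finite union into a finite intersection of the complements, each of which we have shown lies in $\GDMILPR(\Q)$, yielding $(P+M)^c \in \GDMILPR(\Q)$. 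The rational claim is immediate throughout since every auxiliary object ($L$, $\Lambda'$, the cosets $v^j$, the translations) is rational when $P$ and $M$ are. The main obstacle, I expect, is not any single step but the careful open/closed face accounting in the special case $P \subseteq \Pi$ — making the decomposition of $(P+M)^c$ into pieces that are genuinely in $\GDMILPR(\Q)$ and genuinely disjoint (or at least whose union is exactly the complement), rather than off by lower-dimensional boundary sets.
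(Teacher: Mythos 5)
Your argument in the full-dimensional case --- translate $P$ into the fundamental parallelepiped of a scaled sublattice, use the exact decomposition $\cone(M) = \Pi + M'$ and the coset decomposition $M = \bigcup_j (v^j + M')$, then apply De Morgan and closure under finite intersections --- is essentially the paper's argument (its Claims 1--3 and the coset union), and the steps you flag as ``bounded correction'' and ``boundary corrections'' are in fact exact with the half-open parallelepiped, so that part is fine.

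The genuine gap is the reduction to the full-dimensional case. You claim that after splitting $\R^n = L \oplus L^\perp$ with $L = \operatorname{span}(\mathcal M)$, the complement consists of $\proj_{L^\perp}(P)^c \oplus L$ ``together with something supported on $L$,'' so that it suffices to work inside $L$. This is false: $P+M$ is not a product across this splitting, because the fibers $P_t = \{u \in L : u+t \in P\}$ vary with $t \in \proj_{L^\perp}(P)$, and the part of $(P+M)^c$ lying in the slab $\proj_{L^\perp}(P)\oplus L$ is the (uncountable) union over $t$ of the sets $(P_t+M)^c + t$, which is a full-dimensional set, not one supported on $L$. Concretely, take $n=2$, $m^1=(1,0)$, and $P$ the triangle with vertices $(0,0),(1,0),(0,1)$: the complement of $P+M$ inside the slab $0\le y\le 1$ contains infinitely many congruent open triangles $\{(x,y): j+1-y<x<j+1,\ 0<y\le 1\}$, $j\in\Z_+$, none of which is visible from the slice $L$ alone. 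The lemma is still true here (that union is itself an open triangle plus the monoid), but your reduction does not produce it. The paper's fix is different and is the step you are missing: extend $\mathcal M$ to a basis by adjoining vectors $\tilde m^{k+1},\ldots,\tilde m^n$ orthogonal to each other and to $L$, prove the result for the \emph{full-dimensional} monoid $\tilde M = \inte\cone(\tilde{\mathcal M})$ in $\R^n$ (which is exactly your full-dimensional argument), and then recover the original complement as $(P+M)^c = (P+\tilde M)^c \cup \bigcup_{i=k+1}^n H^i$ with $H^i = \{x: \langle \tilde m^i, x\rangle \ge \Vert \tilde m^i\Vert_2^2\}$; this identity holds precisely because $P$ has been translated into the fundamental parallelepiped, so its components along each $\tilde m^i$ are strictly smaller than $\Vert\tilde m^i\Vert_2^2$, while any point of $P+(\tilde M\setminus M)$ has some such component at least that large. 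You need this (or an equivalent device) to bridge from the span of $M$ back to the ambient space.
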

\begin{proof}
    Suppose $k\leq n$. We now choose vectors $\tilde m^{k+1},\ldots, \tilde m^n$, a scaling factor $\alpha\in \Z_+$ and a translation vector $f \in \R^n$ such that the following all hold:
    \begin{enumerate}
    \item[(i)] $\tilde {\mathcal M}:= \left\{ m^1,\ldots,m^k,\,\tilde m^{k+1},\ldots, \tilde m^n \right\}$ forms a basis of $\R^n$.
    \item[(ii)] $\left\{ \tilde m^i \right\}_{i=k+1}^n$ are orthogonal to each other and each is orthogonal to the space spanned by $\mathcal M$.
    \item[(iii)] $f + P$ is contained in the fundamental parallelepiped defined by the vectors $\bar{\mathcal M} := \alpha \mathcal M \cup \left\{ \tilde m^{k+1},\ldots, \tilde m^n \right\}$.
    \end{enumerate}
%     so that $\tilde {\mathcal M}:= \left\{ m^1,\ldots,m^k,\,\tilde m^{k+1},\ldots, \tilde m^n \right\}$ forms a basis of $\R^n$. Further, these vectors can be chosen such that $\left\{ \tilde m^i \right\}_{i=k+1}^n$ are orthogonal to each other and each is orthogonal to the space spanned by $\mathcal M$ (using, for instance, the Gram-Schmidt orthogonalization process). We assume that there exists scaling factor $\alpha \in \Z_+$ and a translation vector $f \in \R^n$ such that $f + P$ is contained in the fundamental parallelepiped defined by the vectors $\bar{\mathcal M} := \alpha \mathcal M \cup \left\{ \tilde m^{k+1},\ldots, \tilde m^n \right\}$ for some $\alpha \in \Z_+$. 
     
     Such a choice is always possible because of the boundedness of $P$ and by utilizing the Gram-Schmidt orthogonalization process. Since we are interested in proving inclusion in $\GDMILPR$, which is closed under translations, we can assume $f=0$ without loss of generality.
     \medskip

Define $\tilde\Lambda := Z(\tilde {\mathcal M})$ and $\tilde M:= \inte\cone(\tilde {\mathcal M})$. Define $\bar\Lambda  := Z \left( \bar {\mathcal M} \right)  \subseteq \tilde\Lambda$ and $\bar M := \inte\cone(\bar {\mathcal{M}})\subseteq \tilde M$. Moreover, linear independence of $\tilde {\mathcal{M}}$ and $\bar{\mathcal M}$ implies that $\tilde M = \tilde\Lambda \cap \cone ( \tilde {\mathcal M} )$ and $\bar M = \bar\Lambda \cap \cone ( \bar {\mathcal M} )$. All of these together imply %Moreover, since $\cone \left( \bar {\mathcal M} \right) = \cone \left( \tilde {\mathcal M} \right)$, we have \\
\vskip 5pt
\noindent{\em Claim~\ref{lem:PandLinIndMonoid}.1:} $\bar M= \bar{\Lambda}\cap \tilde M$.
\vskip 5pt
\noindent {\em Proof of Claim~\ref{lem:PandLinIndMonoid}.1:} $\bar M= \bar\Lambda \cap \cone ( \bar {\mathcal M} ) = \bar\Lambda \cap \cone ( \tilde {\mathcal M} ) = (\bar\Lambda \cap \tilde \Lambda) \cap \cone ( \tilde {\mathcal M} ) = \bar\Lambda \cap \left(\tilde \Lambda \cap \cone ( \tilde {\mathcal M} )\right) = \bar{\Lambda}\cap \tilde M$, where the second equality follows from the fact that $\cone ( \bar {\mathcal M} ) = \cone ( \tilde {\mathcal M})$.

 \medskip
    
\noindent{\em Claim~\ref{lem:PandLinIndMonoid}.2: }$\Pi_{\bar{\mathcal M}} + \bar M = \cone(\tilde{\mathcal{M}})$. Moreover, given any element in $x \in \cone(\tilde{\mathcal{M}})$, there exist unique $u\in\Pi_{\bar{\mathcal M}}$ and $v\in\bar M$ such that $x = u+v$.
\vskip 5pt
   \noindent {\em Proof of Claim~\ref{lem:PandLinIndMonoid}.2:} Suppose $u \in \Pi_{\bar{\mathcal{M}}}$ and $v\in\bar M$, then both $u$ and $v$ are non-negative combinations of elements in $\bar{\mathcal{M}}$. So clearly $u+v$ is also a non-negative combination of those elements. This proves the forward inclusion. To prove the reverse inclusion, let $x \in \cone  ( \tilde {\mathcal{M}})$. Then $x = \sum_{i=1}^k\lambda_im^i + \sum_{i=k+1}^n\lambda_i\tilde m^i $ where $\lambda_i\in\R_+$. But now we can write 
    \begin{equation*}
        x \quad = \quad \left (\sum_{i=1}^k\floor{\frac{\lambda_i}{\alpha}}\alpha m^i + \sum_{i=k+1}^n\floor{\lambda_i}\tilde m^i  \right ) + \left ( \sum_{i=1}^k\left(\frac{\lambda_i}{\alpha}-\floor{\frac{\lambda_i}{\alpha}}\right)\alpha m^i + \sum_{i=k+1}^n(\lambda_i-\floor{\lambda_i})\tilde m^i \right ), 
    \end{equation*}
    where the term in the first parentheses is in $\bar M$ and the term in the second parentheses is in $\Pi_{\bar{\mathcal{M}}}$. Uniqueness follows from linear independence arguments, thus proving the claim.
    \medskip

\noindent{\em Claim~\ref{lem:PandLinIndMonoid}.3: } $\left( \Pi_{\bar{\mathcal M} } + \bar M \right) \setminus(P+\bar M) = (\Pi_{\bar{\mathcal M} }\setminus P) + \bar M$.
\vskip 5pt
\noindent {\em Proof of Claim~\ref{lem:PandLinIndMonoid}.3: } Note that, by construction, $P\subseteq \Pi_{\bar{\mathcal{M}}}$. By the uniqueness result in Claim~\ref{lem:PandLinIndMonoid}.2, $(\Pi_{\bar{\mathcal{M}}} +u)\cap (\Pi_{\bar{\mathcal{M}}}+v)=\emptyset$ for $u,\,v\in \bar M$ and $u\neq v$. Thus we have, $x = u+v= u'+v'$ such that $u\in \Pi_{\bar{\mathcal{M}}},\,u'\in P,\,v,\,v'\in \bar M$ implies $v = v'$. Then the claim follows.
    \medskip
    
    Also, there exists a {\em finite} set $S$ such that $\tilde\Lambda = S+\bar\Lambda $ (for instance, $S$ can be chosen to be $\Pi_{\bar{\mathcal M} }\cap \tilde\Lambda$) \citep[Theorem VII.2.5]{Barvinok2002}.  So we have
\begin{align}
	P + \tilde M \quad&=\quad P + (\tilde \Lambda \cap \tilde M) \notag\\ 
  \quad&=\quad \bigcup_{s\in S} \left( P+ \left( (s+\bar\Lambda )\cap \tilde M \right)  \right) \notag \\
    \quad&=\quad \bigcup_{s\in S}\left ( P + (\bar \Lambda \cap \tilde M)+s\right ) \notag \\
    \quad&=\quad \bigcup_{s\in S}\left ( P + \bar M+s\right ),\label{eq:cosetUnion}
\end{align}
where the last equality follows from Claim~\ref{lem:PandLinIndMonoid}.1.
%\chris{Can you give a few more line of development, like $P + \tilde M = \cdots = \bigcup_{s\in S} \left( P+ \left( (s+\bar\Lambda )\cap \tilde M \right)  \right)$, because I am not following this. The more detail the better, and where we use linear independence should be made very clear. A lot of work was done in the previous paragraph that is hard for me to follow and place. Lots of different notation I am not sure why we need. Giving more detail here may help.} 
The intuition behind the above equation is illustrated in \cref{fig:PandLinIndMonoid}. 
\begin{figure}
\captionsetup[subfigure]{justification=centering}
	\begin{subfigure}{0.45\textwidth}\centering
		\includegraphics[width=\linewidth]{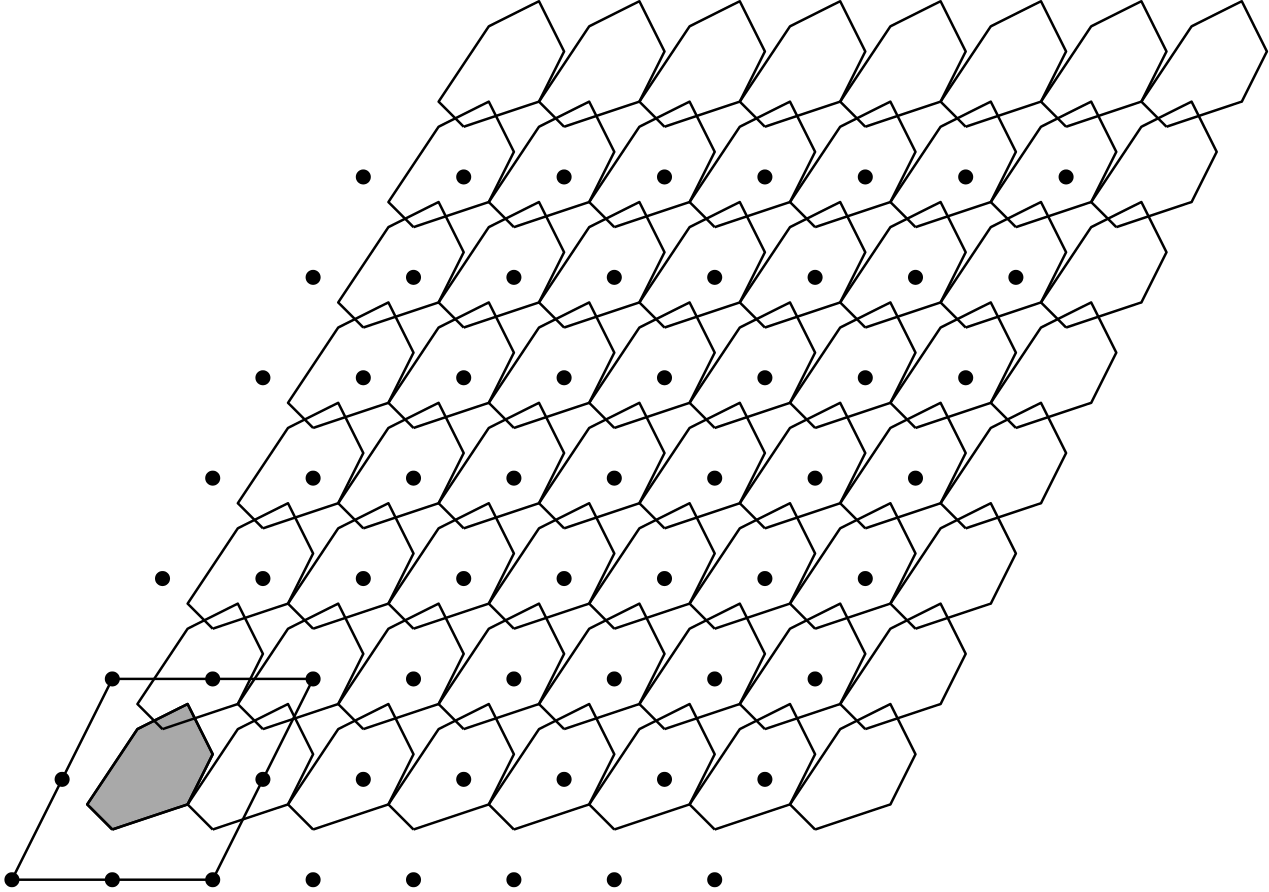}
		\caption{$P$ is shown in gray and the translates of $P$ along $\tilde M$ are shown. The fundamental parallelepiped $\Pi_{\bar\Lambda }$ is also shown to contain $P$.}
		\label{fig:PandLinIndMonoida}
	\end{subfigure}
	\begin{subfigure}{0.45\textwidth}\centering
		\includegraphics[width=\linewidth]{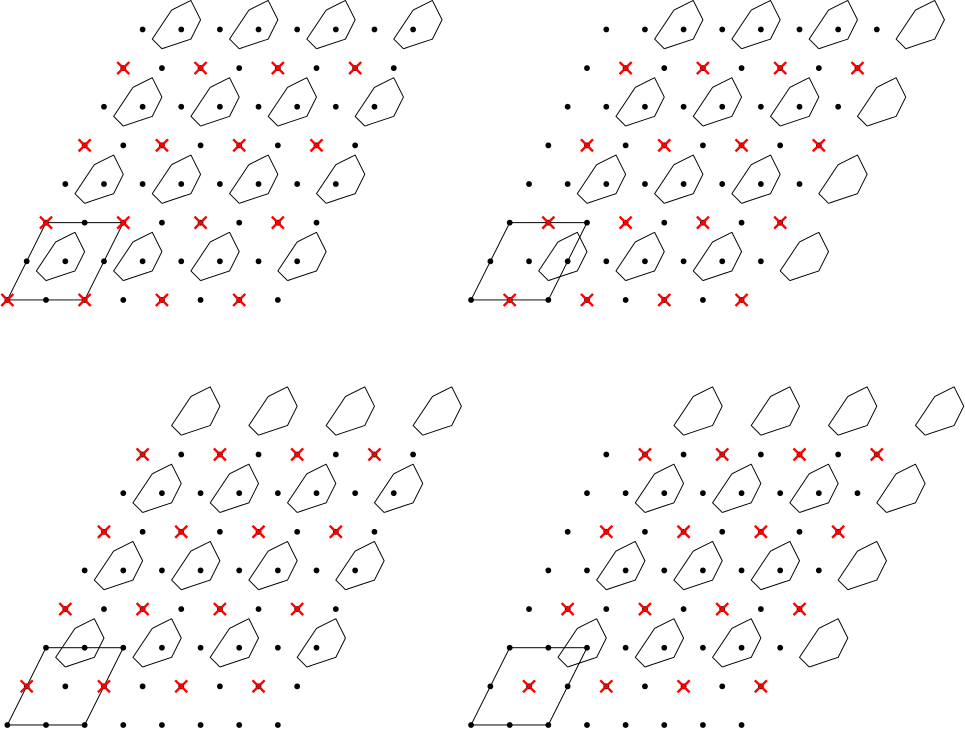}
        \caption{$P+((s+\bar \Lambda) \cap \tilde M)$ is shown for each $s\in S$. The red crosses correspond to the translation of $\bar M$ along each $s\in S$. The union of everything in \cref{fig:PandLinIndMonoidb} is \cref{fig:PandLinIndMonoida}.}
		\label{fig:PandLinIndMonoidb}
	\end{subfigure}
    \caption{Intuition for the set $S$ such that $\bar\Lambda + S = \tilde \Lambda$.}
	\label{fig:PandLinIndMonoid}
\end{figure}
    We will first establish that $(P+\tilde M)^c \in {\GDMILPR}$. By taking complements in~\eqref{eq:cosetUnion}, we obtain that $(P+\tilde M)^c = \bigcap_{s\in S}\left( P+ \bar M + s \right)^c $. But from \cref{lem:MIintersect}, and from the finiteness of $S$, if we can show that $( P+ \bar M + s)^c$ is in ${\GDMILPR}$ for every $s \in S$, then we would have established that $(P+\tilde M)^c \in {\GDMILPR}$.

Since each of the finite $s\in S$ induce only translations, without loss of generality, we can only consider the case where $s=0$. Since we have $P+\bar M \subseteq \cone (\tilde {\mathcal{M}})$, we have%Here, \chris{This argument needs $P + \bar M \subseteq \cone (\tilde{\mathcal M})$ but the closest thing that seems to be argued above is $P+\tilde M \subseteq \cone (\tilde{\mathcal M})$\sri{$\bar M \subset \tilde M$. So we have that.}. I would start to give some of these facts equation numbers to make this argument easier to follow.}
\begin{align}
	\left( P+\bar M \right) ^c \quad&=\quad \cone(\tilde {\mathcal M})^c \cup \left( \cone(\tilde {\mathcal M})\setminus (P+\bar M) \right) \notag  \\
    \quad&=\quad  \cone(\tilde {\mathcal M})^c \cup \left( \left( \Pi_{\bar{\mathcal M} } + \bar M \right) \setminus(P+\bar M)   \right), \label{eq:last-bit} 
\end{align}
which follows from Claim~\ref{lem:PandLinIndMonoid}.2. Continuing from \eqref{eq:last-bit}:
\begin{equation*}
\left( P+\bar M \right) ^c  \quad =\quad \cone(\tilde {\mathcal M})^c \cup \left( (\Pi_{\bar{\mathcal M} }\setminus P) + \bar M \right), \label{eq:good-looking-union}
\end{equation*}
which follows from Claim~\ref{lem:PandLinIndMonoid}.3.

The first set $\cone(\tilde {\mathcal M})^c$ in \eqref{eq:good-looking-union} belongs to ${\GDMILPR}$ since the complement of a cone is a finite union of generalized polyhedra. In the second set $(\Pi_{\bar{\mathcal M} }\setminus P) + \bar M$, $\Pi_{\bar{\mathcal M} }$ and $P$ are generalized polytopes, and hence $\Pi_{\bar{\mathcal M} }\setminus P$ is a finite union of generalized polytopes, $(\Pi_{\bar{\mathcal M} }\setminus P) + \bar M$ is a set in ${\GDMILPR}$. Thus, $(P+\bar M)^c \in {\GDMILPR}$ (note that \cref{lem:DMIRisFiniteUnion} shows that ${\GDMILPR}$ is closed under unions). %Substituting this result into \eqref{eq:cosetUnion} shows that $(P+\tilde M)^c = \bigcap_{s\in S}\left( P+ \bar M + s \right)^c $ which is a finite intersection of sets from ${\GDMILPR}$ and hence $(P+\tilde M)^c \in {\GDMILPR}$. %\chris{I am not seeing how to ``substitute'' this result into \eqref{eq:cosetUnion} because $P+\bar M$ does not appear there. I think more detail may need to be given here. Also, what was done ``without loss of generality'' by taking $s = 0$ and how do we reconcile that with now reintroducing $s$. I am confused by this. It is less ``without loss of generality'' than simply the same argument holds for different $s$?} 
\medskip

We now finally argue that $(P + M)^c$ belongs to ${\GDMILPR}$. Let $A^1:=(P+\tilde M)^c$.

For each vector $\tilde m^i$ for $i=k+1,\ldots,n$ added to form $\tilde{\mathcal M}$ from $\mathcal M$, define $H^i$ as follows:
\begin{equation*}
H^i \quad=\quad \left\{ x: \langle \tilde m^i,\, x \rangle \geq \Vert\tilde m^i \Vert_2^2 \right\}.	
\end{equation*}
    Now let $A^2 := \bigcup_{i=k+1}^n H^i$. Note that $A^2$ is a finite union of halfspaces and hence $A^2 \in {\GDMILPR}$. We claim $(P+M)^c = A^1\cup A^2$. This suffices to complete the argument since we have shown $A^1$ and $A^2$ are in ${\GDMILPR}$ and thus so is their union. 

    First, we show that $A^1 \cup A^2 \subseteq (P+M)^c$, i.e., $P+M \subseteq A_1^c \cap A_2^c$. Let $x\in P+M$. Since $M \subseteq \tilde M$ we have $x \in P+\tilde M$. Thus, $x \not\in A^1$. Further, since $x \in P + M$ we may write  $x = u +v$ with $u\in P$ and $v\in M$ where $u=\sum_{i=1}^k\mu_i\alpha m^i + \sum_{i=k}^n \mu_i\tilde m^i$,\, $v = \sum_{j=1}^k\lambda_jm^j$ with $0\leq\mu < 1$ and $\lambda_j \in \Z_+$, since $P \subseteq \Pi_{\bar {\mathcal M}}$. So for all $i$, $ \langle \tilde m^i,\, u \rangle = \mu_i \Vert\tilde m^i \Vert_2^2 < \Vert\tilde m^i \Vert_2^2 $. This is because we have $\tilde m^i$ is orthogonal to every vector $m^j$ and $\tilde m^j$ for $i\neq j$. Hence, $\langle \tilde m^i,\, u+ v \rangle < \Vert\tilde m^i \Vert_2^2 $. This follows because $\tilde m^i$ is orthogonal to the space spanned by the monoid $M\ni v$. Thus $x \not\in A^2$. So we now have $P+M \subseteq A_1^c \cap A_2^c$ and so  $A^1 \cup A^2 \subseteq (P+M)^c$. 

    Conversely, suppose $x \not\in P+M$. If, in addition, $x\not\in P+\tilde M$ then $x\in A^1$ and we are done. However, if $x = u + v\in P+ (\tilde M\setminus M)$ with $u\in P$ and $v\in \tilde M\setminus M$. This means $v = \sum_{j=1}^k\lambda_jm^j + \sum_{j=k+1}^n \lambda_j\tilde m^j$ with $\lambda_j \in \Z_+$ for all $j = 1,\ldots,n$ and  $\lambda_{\bar j} \geq 1$  for some $\bar j \in \{k+1,\ldots,n\}$ and we can write $u = \sum_{j=1}^k \mu_j\alpha m^j + \sum_{j=k+1}^n \mu_j \tilde m^j $ with $0\leq \mu \leq 1$. So $\langle \tilde m^i,\, u \rangle = \langle \tilde m^i,\, \mu^i\tilde m^i \rangle \geq 0$ and $\langle \tilde m^{\bar j},\, v \rangle = \langle \tilde m^{\bar j},\, \lambda_{\bar j}\tilde m^{\bar j} \rangle > \Vert m^{\bar j}\Vert _2^2$. So $u+v \in H^{\bar j}\subseteq A^2$. Thus we have the reverse containment and hence the result.
    
    { The rational version follows along similar lines.}
\end{proof}

\begin{lemma}\label{lem:MILPRcompl}
	Let $P\subseteq\R^n$ be a { rational} generalized polytope and $M\in\R^n$ be a {rational, finitely generated} monoid. Then $S = (P+M)^c \in {\GDMILPR}(\Q)$.
\end{lemma}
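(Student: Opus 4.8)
The plan is to reduce the general case to the linearly-independent case already handled in \cref{lem:PandLinIndMonoid}. Given a rational finitely generated monoid $M = \inte\cone\{m^1,\ldots,m^t\}$, we can triangulate the cone $\cone(M)$ into finitely many simplicial cones $C_1,\ldots,C_N$, each generated by a linearly independent subset of $\{m^1,\ldots,m^t\}$ (after possibly subdividing, and scaling the ray generators to remain integer/rational). The key difficulty is that the monoid $M$ is \emph{not} in general the union of the sub-monoids generated by these linearly independent subsets: integer combinations along rays spanning a simplicial cone need not land inside $M$, and conversely lattice points in $\cone(M)$ that are integer combinations of \emph{all} generators may not decompose through a single simplicial piece. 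So the first real step is to argue that, \emph{up to a finite union with monoids from lower-dimensional faces}, one can write $P + M = \bigcup_{j} (P_j + M_j)$ where each $M_j$ is a monoid generated by a linearly independent set and each $P_j$ is a (rational) generalized polytope. This is a standard ``Minkowski sum of monoid and polytope equals finite union of simpler such sums'' argument: one uses a common refinement / Carathéodory-type decomposition of integer points of $\cone(M)$ into a bounded remainder plus an integer combination from a simplicial sub-cone, exactly in the spirit of the decomposition used in \cref{lem:genisrelOp} and the generating-function literature on rational cones.

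\textbf{Key steps, in order.} First, triangulate $\cone(M)$ into simplicial rational cones using a subset of the generators as ray generators; let $\mathcal{M}_j$ be the corresponding linearly independent generating sets and $M_j = \inte\cone(\mathcal{M}_j)$ the associated monoids, possibly after rescaling generators to keep everything rational. Second, prove the decomposition $P + M = \bigcup_j (P_j + M_j)$ (finite union, all data rational): the inclusion $\supseteq$ needs each $M_j \subseteq M$, which may fail for the naive choice, so instead one absorbs the ``defect lattice'' by enlarging $P$ to a finite union $P_j$ of translates of $P$ by a bounded set of lattice points — this is where the finiteness of a fundamental-domain / index computation is invoked. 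Third, take complements: $(P+M)^c = \bigcap_j (P_j + M_j)^c$. Fourth, for each $j$, $P_j$ is a finite union of rational generalized polytopes, so $(P_j + M_j)^c = \bigcap_{\ell} (P_{j,\ell} + M_j)^c$ where each $P_{j,\ell}$ is a single rational generalized polytope; by \cref{lem:PandLinIndMonoid} each $(P_{j,\ell}+M_j)^c \in {\GDMILPR}(\Q)$. Fifth, conclude using \cref{lem:DMIIntersect} (closure of ${\GDMILPR}(\Q)$ under finite intersections) that the overall finite intersection $(P+M)^c$ lies in ${\GDMILPR}(\Q)$.

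\textbf{Main obstacle.} The hard part will be step two: producing the decomposition $P+M = \bigcup_j(P_j+M_j)$ with \emph{linearly independent} generating sets while keeping every $P_j$ a genuine (bounded) generalized polytope and all data rational. The subtlety is twofold — first, the triangulation of $\cone(M)$ may introduce simplicial cones whose integer points are not reachable from $M$ alone, so one must carefully account for the sublattice index and enlarge $P$ by a \emph{finite} set of correction vectors (this is essentially the argument behind $\tilde\Lambda = S + \bar\Lambda$ used inside \cref{lem:PandLinIndMonoid}, now applied per simplicial cone); second, faces of lower dimension of $\cone(M)$ contribute their own monoids, which must be handled by induction on dimension or folded into the same finite union. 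Once this combinatorial/lattice bookkeeping is in place, the rest is a routine application of the already-established closure properties of ${\GDMILPR}(\Q)$ under finite unions and intersections, together with \cref{lem:PandLinIndMonoid}. Rationality is preserved throughout because triangulations, sublattice indices, and fundamental parallelepipeds of rational data are rational.
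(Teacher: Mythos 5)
Your proposal follows essentially the same route as the paper: triangulate $\cone(M)$ into simplicial cones, reduce to monoids generated by linearly independent vectors at the cost of finitely many translates, take complements via De Morgan, and finish with \cref{lem:PandLinIndMonoid} and \cref{lem:DMIIntersect}. The decomposition you flag as the main obstacle is exactly what the paper discharges by citing Theorem 4 of \citet{jeroslow1978some}: each $M_i := M \cap C_i$ is a finitely generated pointed monoid that can be written as a finite union $\bigcup_{j}(p^{i,j}+\bar M_i)$ of translates of the monoid $\bar M_i$ generated by the elements of $M_i$ on the extreme rays of $C_i$, which supplies precisely the finite set of correction vectors your plan calls for.
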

\begin{proof}
Define $C:= \cone(M)$. Consider a triangulation $C = \bigcup_i C_i$, where each $C_i$ is simplicial. Now, $M_i := M \cap C_i$ is a monoid for each $i$ (one simply checks the definition of a monoid) and moreover, it is a pointed monoid because $C_i$ is pointed and $\cone(M_i) = C_i$ since every extreme ray of $C_i$ has an element of $M_i$ on it. Observe that $M = \bigcup_i M_i$.

    By Theorem 4, part 1) in \citet{jeroslow1978some}, each of the $M_i$ are finitely generated. By part 3) of the same theorem, each $M_i$ can be written as $M_i = \bigcup_{j = 1}^{w_i} (p^{i,j} + \bar{M}_i)$ for some finite vectors $p^{i,1}, \ldots, p^{i,w_i} \subseteq M_i$, where $\bar{M}_i$ is the monoid generated by the elements of $M_i$ lying on the extreme rays of $C_i$.
Now, 
\begin{equation*}
P+M \quad=\quad \bigcup_{i} (P + M_i) \quad=\quad \bigcup_{i} \bigcup_{j=1}^{w_i} (P+ (p^{i,j} + \bar{M}_i)).
\end{equation*}
Thus by \cref{lem:DMIIntersect}, it suffices to show that $(P+ (p^{i,j} + \bar{M}_i))^c$ is in ${\GDMILPR}$. Since $\bar{M}_i$ is generated by linearly independent vectors, we have our result from \cref{lem:PandLinIndMonoid}.
\end{proof}
\begin{remark}\label{rem:rationality-assumption-2}
{We do not see a way to remove the rationality assumption in \cref{lem:MILPRcompl}, because it uses Theorem 4 in \citet{jeroslow1978some} that assumes that the monoid is rational and finitely generated. This is the other place where rationality becomes a crucial assumption in the analysis (see also \cref{rem:rationality-assumption-1}).}
\end{remark}

\begin{lemma}\label{cor:MILPRcompl}
    If $S \in {{\GDMILPR}(\Q)}$ then $S^c \in {{\GDMILPR}(\Q)}$.
\end{lemma}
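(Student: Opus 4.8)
The plan is to reduce the statement to the two main technical results already in hand — \cref{lem:MILPRcompl}, which says that the complement of a single Minkowski sum $P+M$ of a rational generalized polytope $P$ and a rational finitely generated monoid $M$ lies in ${\GDMILPR}(\Q)$, and \cref{lem:DMIIntersect}, which gives closure of ${\GDMILPR}(\Q)$ under finite intersections — together with the generalized Jeroslow--Lowe characterization of \cref{cor:jeroslow-lowe-generalized}.

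First I would invoke \cref{cor:jeroslow-lowe-generalized} to write any $S \in {\GDMILPR}(\Q)$ in the form $S = \bigcup_{i=1}^k (P_i + M_i)$, where each $P_i$ is a rational generalized polytope and each $M_i$ is a rational finitely generated monoid (a summand that is a bare generalized polytope is covered by taking $M_i = \{0\}$). Then, applying De Morgan's law, $S^c = \bigcap_{i=1}^k (P_i + M_i)^c$. By \cref{lem:MILPRcompl}, each $(P_i + M_i)^c$ belongs to ${\GDMILPR}(\Q)$, and since ${\GDMILPR}(\Q)$ is closed under finite intersections by \cref{lem:DMIIntersect} (applied inductively over the $k$ factors), we conclude $S^c \in {\GDMILPR}(\Q)$, as desired.

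I expect no genuine obstacle at this stage: all of the combinatorial difficulty — triangulating $\cone(M)$ into simplicial cones, decomposing the induced pointed monoids into finitely many cosets of a sublattice, and fitting the polytope inside a fundamental parallelepiped — was already absorbed into the proofs of \cref{lem:PandLinIndMonoid} and \cref{lem:MILPRcompl}. The only things to verify are bookkeeping: that \cref{cor:jeroslow-lowe-generalized} really produces the $P_i + M_i$ decomposition with rational data, and that \cref{lem:DMIIntersect} indeed gives closure of the whole family ${\GDMILPR}(\Q)$ under intersection so that iterating it over finitely many terms is legitimate.
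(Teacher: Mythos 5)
Your proposal is correct and follows essentially the same route as the paper's own proof: decompose $S$ via \cref{cor:jeroslow-lowe-generalized} into a finite union of sets $P_i + M_i$, apply De Morgan's law, invoke \cref{lem:MILPRcompl} for each complement, and close under finite intersections using \cref{lem:DMIIntersect}. No gaps.
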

\begin{proof}
	{By \cref{cor:jeroslow-lowe-generalized}, $S$ can be written as a finite union} $S = \bigcup_{j=1}^\ell S_j$, with $S_j = P_j+M_j$, where{ $P_j$ is a rational generalized polytope} and $M_j$ is a {rational,} finitely generated monoid. {Observe $S_j^c = (P_j + M_j)^c$, which by Lemma \ref{lem:MILPRcompl}, is in ${\GDMILPR}(\Q)$.} Now by De Morgan's law, $S^c = \left ( \bigcup_j S_j\right )^c = \bigcap_j S_j^c$. By \cref{lem:DMIIntersect}, ${\GDMILPR}(\Q)$ is closed under intersections, and we have the result.
 \end{proof}

%\chris{I think I am a little bit confused about why we can strikeout the above in the proof. Maybe this is good, because it forces us to discuss Theorem 33, which I think we should in our meeting.}

\begin{proof}[Proof of \cref{thm:algebra}]
	We recall that a family of sets $\mathscr{F}$ is an algebra if the following two conditions hold. (i) $S\in\mathscr F \implies S^c \in \mathscr F$ and (ii) $S,\,T\in\mathscr F \implies S \cup T \in \mathscr F$. For the class of interest, the first condition is satisfied from \cref{cor:MILPRcompl} and noting that the complement of finite unions is a finite intersection of the complements and that the family $\GDMILPR(\Q)$ is closed under finite intersections by~\cref{lem:DMIIntersect}. The second condition is satisfied by \cref{lem:DMIRisFiniteUnion} which shows that ${\GDMILPR(\Q)}$ is the same as {\em finite unions} of sets in ${\GMILPR(\Q)}$.
\end{proof}

\subsection{Value function analysis}\label{ss:value-function}

%We now prove some structural results about the families of functions which are used to characterize value functions of pure and mixed-integer programs. These are called the Chv\'atal functions, Gomory functions and the Jeroslow functions. 

We now discuss the three classes of functions defined earlier, namely, Chv\'atal functions, Gomory functions and Jeroslow functions, and show that their sublevel, superlevel and level sets are all elements of ${\GDMILPR(\Q)}$. This is crucial for studying bilevel integer programs via a value function approach to handling the lower-level problem using the following result.

\begin{theorem}[Theorem 10 in \citet{blair1995closed}]\label{thm:MIPvalfun}
    For every rational mixed integer program $\max_{x,\,y} \{ c^\top x + d^\top y : Ax + By =b;\, (x,\,y)\geq 0,\, x\in\Z^m\}$ there exists a Jeroslow function $J$ such that if the program is feasible for some $b$, then its optimal value is $J(b)$.
\end{theorem}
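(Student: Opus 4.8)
The plan is to prove this result, due to \citet{blair1995closed}, in two stages: first treat the pure-integer case, and then lift to the general mixed-integer program by a lattice decomposition of the right-hand side $b$.

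\emph{Stage 1 (the pure-integer value function).} First I would invoke the Blair--Jeroslow theorem (\citet{Blair1982}): the value function of a pure integer program $\max\{c^\top x : Ax = b,\ x \ge 0,\ x \in \Z^m\}$, restricted to those $b$ for which it is feasible and finite, is a Gomory function. At a high level this is proved by induction on the number of integer variables: the LP value function is a minimum of finitely many linear forms (one per dual vertex), hence already of Gomory type on each cell of the associated polyhedral fan, and the integrality correction on each cell can be written using a Gomory function of lower complexity together with floor operators, the floors arising exactly from the rounding needed to land on the lattice $A_{\mathcal B}\Z^k$ generated by the integer columns of a basis $\mathcal B$. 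Finiteness of the set of bases keeps this a single Gomory function (a minimum of finitely many Chv\'atal functions). I would take this as given and record the resulting Gomory function as $G$.

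\emph{Stage 2 (the mixed-integer value function).} For the full program I would examine the LP relaxation $\max\{c^\top x + d^\top y : Ax + By = b,\ x,y \ge 0\}$, assuming its optimal value is finite on the feasibility region (otherwise the MILP is unbounded whenever feasible and the statement is degenerate). Enumerate the finitely many bases of $[A\ B]$ that are LP-optimal for some right-hand side, and split each basis matrix into its integer-variable and continuous-variable columns. For each such basis $i$, the integer columns span a sublattice; encode ``round $b$ down into this sublattice'' by an invertible rational matrix $E_i$, so that $\floor{b}_{E_i} \in E_i\Z^n$ is the portion of $b$ the basic integer variables can produce at integer levels, while the remainder $b - \floor{b}_{E_i}$ lies in the bounded set $E_i[0,1)^n$ and is absorbed by the basic continuous variables at the marginal rate $w_i$ given by the corresponding reduced-cost (dual) vector. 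Plugging the $G$ of Stage 1 into the integer subproblem yields the contribution $G(\floor{b}_{E_i}) + w_i^\top(b - \floor{b}_{E_i})$ of basis $i$, and I would finish by proving
\[
  z(b) \;=\; \max_i \Big( G(\floor{b}_{E_i}) + w_i^\top\big(b - \floor{b}_{E_i}\big) \Big)
\]
for every feasible $b$, which is precisely the Jeroslow-function form. The ``$\ge$'' inequality is easy: each bracketed term is attained by an explicit feasible solution. The ``$\le$'' inequality is the substantive content: one must show an optimal MILP solution can always be decomposed, relative to some LP-optimal basis, as ``basic integer variables realizing a lattice point $\floor{b}_{E_i}$'' plus ``basic continuous variables covering a bounded remainder linearly,'' and that the finitely many bases jointly cover the whole feasibility region.

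The hard part will be exactly this ``$\le$'' direction of Stage 2: coordinating the combinatorics of optimal LP bases, the geometry of the normal fan in $b$-space, and the lattice bookkeeping so that every feasible $b$ is captured by one of the finitely many pairs $(E_i, w_i)$ and so that the optimal integer levels can be taken to hit the floor $\floor{b}_{E_i}$ exactly while the continuous basics mop up the remainder. Blair's original argument pushes this through by a careful induction, and reproducing it faithfully is the bulk of the work. A secondary point, which is where the rationality hypothesis enters, is keeping the whole construction finite --- finitely many rational $E_i$ and $w_i$ and a single Gomory $G$ out of Stage 1 --- so that the assembled function is genuinely a Jeroslow function rather than an infinite family.
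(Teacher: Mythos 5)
The paper does not prove this statement at all: it is quoted verbatim as Theorem~10 of \citet{blair1995closed} and used as a black box, so there is no in-paper argument to compare yours against. Judged on its own terms, your write-up is a plausible reconstruction of the architecture of Blair's proof --- pure-integer case first via the Blair--Jeroslow Gomory-function theorem, then the mixed case by splitting optimal bases into integer and continuous columns, with $\floor{b}_{E_i}$ capturing the lattice part and $w_i$ the marginal rate on the continuous remainder --- and it correctly lands on the syntactic form $\max_i \left( G(\floor{b}_{E_i}) + w_i^\top(b - \floor{b}_{E_i}) \right)$ demanded by the definition of a Jeroslow function.

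However, as a proof it has two genuine gaps, both of which you flag yourself but neither of which you close. Stage~1 invokes the pure-integer value-function theorem as given, and Stage~2 leaves the ``$\le$'' direction --- showing every optimal mixed-integer solution decomposes relative to one of finitely many bases into a lattice point hit exactly by $\floor{b}_{E_i}$ plus a bounded remainder absorbed linearly --- as ``the bulk of the work.'' That direction is where Blair's induction actually lives, and without it the argument establishes only that the proposed expression is a lower bound on the value function. So what you have is an accurate outline, not a proof. For the purposes of this paper the intended treatment is simply to cite Blair, which is what the authors do; if you want to supply a proof you would need to carry out the induction in Stage~2 rather than gesture at it.
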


We now show that the sublevel, superlevel, and level sets of Chv\'atal functions are all in ${\GDMILPR}(\Q)$.

\begin{lemma}\label{lem:ChvatalisDMILP}
    Let $\psi:\R^n\mapsto\R$ be a Chv\'atal function. Then (i) $\left \lbrace x:\psi(x) \geq 0\right \rbrace$, (ii) $\left \lbrace x:\psi(x) \leq 0\right \rbrace$, (iii) $\left \lbrace x:\psi(x) = 0\right \rbrace$, (iv) $\left \lbrace x:\psi(x) < 0\right \rbrace$, and (v) $\left \lbrace x:\psi(x) > 0\right \rbrace$ are all in ${\GDMILPR}(\Q)$.
\end{lemma}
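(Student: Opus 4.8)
The plan is to induct on the order of a binary-tree representation of the Chv\'atal function $\psi$, using the algebra structure of $\GDMILPR(\Q)$ established in \cref{thm:algebra}. The base case is when $\psi$ is affine with rational coefficients, say $\psi(x) = \langle a,x\rangle - \beta$; then each of the five sets is a rational (generalized) polyhedron, hence trivially in $\GDMILPR(\Q)$. For the inductive step, write $\psi$ in terms of its children in a tree of minimal order. If the root has two children with functions $g_1,g_2$ and nonnegative rational edge labels $a_1,a_2$, then $\psi = a_1 g_1 + a_2 g_2$; here I would not try to decompose the level sets of a sum directly, but instead introduce auxiliary variables: $\{x : \psi(x) \le 0\} = \proj_x\{(x,t_1,t_2) : t_1 = g_1(x),\ t_2 = g_2(x),\ a_1 t_1 + a_2 t_2 \le 0\}$. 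The graph $\{(x,t_i) : t_i = g_i(x)\}$ is the $=0$ level set of the Chv\'atal function $g_i(x) - t_i$ (same order as $g_i$, hence strictly smaller order than $\psi$), so it lies in $\GDMILPR(\Q)$ by induction; intersecting with the rational halfspace $a_1 t_1 + a_2 t_2 \le 0$ stays in $\GDMILPR(\Q)$ by \cref{lem:DMIIntersect}, and projection (a linear transform) preserves the class by definition of representability. The strict/nonstrict and $\ge,\ =$ variants are handled by swapping the halfspace for $<,\ \ge,\ >,\ =$ respectively, or by taking complements and intersections via \cref{thm:algebra}.

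The single-child case with a rational scalar label $\alpha \ge 0$ is similar but easier: if $\alpha > 0$ then $\{x : \alpha g(x) \le 0\} = \{x : g(x) \le 0\}$ and likewise for the other relations, so the result is immediate from the inductive hypothesis applied to $g$; if $\alpha = 0$ then $\psi \equiv 0$ and the five sets are $\R^n, \R^n, \R^n, \emptyset, \emptyset$, all in $\GDMILPR(\Q)$. The interesting single-child case is the floor: $\psi(x) = \floor{g(x)}$ where $g$ has strictly smaller order. Here I would use the identity $\floor{g(x)} \le 0 \iff g(x) < 1$ (since $\floor{t} \le 0 \iff t < 1$ for real $t$), so $\{x : \psi(x) \le 0\} = \{x : g(x) < 1\}$, which by the inductive hypothesis applied to the Chv\'atal function $g(x) - 1$ is the $<0$ sublevel set and hence in $\GDMILPR(\Q)$. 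For the remaining relations I would again reduce to level sets of $g$ shifted by an integer, but these are genuinely different: $\{x : \floor{g(x)} = 0\} = \{x : 0 \le g(x) < 1\}$, an intersection of a superlevel and a strict sublevel set of $g$; $\{x : \floor{g(x)} \ge 0\} = \{x : g(x) \ge 0\}$; $\{x : \floor{g(x)} > 0\} = \{x : g(x) \ge 1\}$; and $\{x : \floor{g(x)} < 0\} = \{x : g(x) < 0\}$. Each right-hand side is a sub/super/level set of the Chv\'atal function $g$ (or $g$ shifted by a rational integer, which is again Chv\'atal of the same order), so each lies in $\GDMILPR(\Q)$ by the inductive hypothesis, and intersections stay in the class by \cref{lem:DMIIntersect}.

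I expect the main technical point — and the one requiring care — to be the floor case, specifically making sure that the translation of the problem about $\floor{g(x)}$ into conditions purely about the \emph{real} value $g(x)$ is exactly right (the asymmetry between $\le$ and $<$, between $0$ and $1$), and that at each recursion step the object we feed back into the inductive hypothesis is genuinely a Chv\'atal function of strictly smaller order. The latter is where the explicit binary-tree formalism in \cref{Def:ChvFun} pays off: replacing $g$ by $g - 1$ or $g - t$ (an affine modification) does not add floor operations, so the order bookkeeping goes through, and adding a fresh free variable $t$ together with an affine constraint keeps us inside the generalized-polyhedral world. Everything else is routine application of closure of $\GDMILPR(\Q)$ under finite intersections (\cref{lem:DMIIntersect}), complements (\cref{cor:MILPRcompl}), finite unions (\cref{lem:DMIRisFiniteUnion}), and linear images (definition of representability). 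Finally, I would note that (iii) follows from (i) and (iv) since $\{x:\psi(x)=0\} = \{x:\psi(x)\ge 0\} \cap \{x:\psi(x)<0\}^c$ via \cref{thm:algebra}, so it suffices to establish any two of $\{\ge,\le,<,>\}$ and deduce the rest — though in practice the case analysis above already produces all five directly.
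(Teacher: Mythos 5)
Your overall strategy --- induction on the order of a binary-tree representation combined with the algebra structure of $\GDMILPR(\Q)$ --- matches the paper's, and your treatment of the floor node is correct and in fact cleaner than the paper's: the identities $\floor{g(x)}\le 0 \iff g(x)<1$, $\floor{g(x)}\ge 0\iff g(x)\ge 0$, and so on avoid the existentially quantified integer variable that the paper introduces. The problem is in the two-child (sum) case, and it is exactly the point you flagged and then waved through. You claim that $g_i(x)-t_i$ has ``the same order as $g_i$, hence strictly smaller order than $\psi$.'' The second implication is false in general: the order of $\psi = a_1g_1+a_2g_2$ is the \emph{sum} of the orders of the two subtrees, so if one child is affine (order $0$) the other child has the same order as $\psi$, and your inductive hypothesis --- stated for strictly smaller order --- does not apply to it. The same defect appears, more mildly, in the scaling case $\psi=\alpha g$, where $g$ again has the same order as $\psi$. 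Switching the induction measure to tree size does not repair the sum case either, since the tree you would build for $g_i(x)-t_i$ can have exactly as many nodes as the tree for $\psi$ (take the other subtree to be a single leaf). As written, the induction does not close.

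The paper sidesteps this entirely by invoking a normal form (Theorem~4.1 of \citet{Basu2016}): every Chv\'atal function with a representation of order $k+1$ can be written as $\psi_1(x)+\floor{\psi_2(x)}$ with $\psi_1,\psi_2$ of order at most $k$. With that decomposition there is only one inductive case and the order genuinely drops. If you want to keep your tree recursion instead, the fix is to strengthen the induction hypothesis --- for instance, prove by induction on the lexicographic pair (order, tree size) that for every rational affine function $a$ of $(x,y)$ all five level sets of $(x,y)\mapsto g(x)-a(x,y)$ lie in $\GDMILPR(\Q)$ --- so that in the sum case you apply the hypothesis to $g_i$ itself (strictly smaller tree) with $a=t_i$, rather than to the recombined function $g_i(x)-t_i$. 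Either repair works; without one of them this step is a genuine gap. The remaining ingredients you use (closure under finite intersections, complements, unions, and rational linear images) are all legitimately available from \cref{thm:algebra}, \cref{lem:DMIIntersect}, and \cref{lem:DMIRisFiniteUnion}.
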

\begin{proof}
    (i) %This follows from ~\cite[Theorem~4.3]{Basu2016} that such a set is in $\MILPR\subseteq {\GDMILPR}$. 
    We prove by induction on the order $\ell\in\Z^+$ used in the binary tree representation of $\psi$ (see \cref{Def:ChvFun}). For $\ell = 0$, a Chv\'atal function is a rational linear inequality and hence $\left \lbrace x:\psi(x) \geq 0\right \rbrace$ is a rational halfspace, which is clearly in ${\GDMILPR}(\Q)$. Assuming that the assertion is true for all orders $\ell \leq k$, we prove that it also holds for order $\ell = k+1$.
    By \cite[Theorem.~4.1]{Basu2016}, we can write $\psi(x) = \psi_1(x) + \floor{\psi_2(x)}$ where $\psi_1$ and $\psi_2$ are Chv\'atal functions with representations of order no greater than $k$. Hence,%, if not $\psi$ will have an order greater than $k+1$. Hence, 
    %\chris{Why is this?}\sri{Guess it is obvious to check, given that we only have a chvatal function in each level of construction. No? If still not okay, we can cite .}. 
    \begin{align*}
        \left \lbrace x:\psi(x)\geq 0\right \rbrace \quad&=\quad \left \lbrace x: \psi_1(x) + \floor{\psi_2(x)} \geq 0\right \rbrace\\
        \quad&=\quad \left \lbrace x: \exists y \in \Z,\, \psi_1(x)+y \geq 0,\, \psi_2(x) \geq y\right \rbrace.
    \end{align*}
    We claim equivalence because, suppose $\bar x$ is an element of the set in RHS with some $\bar y \in \Z$, $\bar y$ is at most $\floor{\psi_2(x)}$. So if $\psi_1(\bar x) + \bar y\geq 0$, we immediately have $\psi_1(\bar x) + \floor{\psi_2(\bar x)} \geq0$ and hence $\bar x$ is in the set on LHS. Conversely, if $\bar x$ is in the set on LHS, then choosing $\bar y = \floor{\bar x}$ satisfies all the conditions for the sets in RHS, giving the equivalence. Finally, observing that the RHS is an intersection of sets which are already in ${\GDMILPR}(\Q)$ by the induction hypothesis and the fact that ${\GDMILPR}(\Q)$ is an algebra by \cref{thm:algebra}, we have the result.  

    (ii) By similar arguments as (i), the statement is true for $\ell = 0$. For positive $\ell$, we proceed by induction using the same construction. Now,
    \begin{align*}
    \left \lbrace x:\psi(x) \leq 0\right \rbrace  \quad&=\quad \left \lbrace x:\psi_1(x)  + \floor{\psi_2(x)} \leq 0\right \rbrace \\
     \quad& =\quad \left \lbrace x: \exists y\in\Z,\, \psi_1(x)+y \leq 0,\, \psi_2(x)\geq y,\,\psi_2(x) < y+1 \right \rbrace.
    \end{align*}

    The last two conditions along with integrality on $y$ ensures $y = \floor{\psi_2(x)}$. Note that $\left \lbrace x : \psi_2(x) -y \geq 0\right \rbrace$ is in ${\GDMILPR}(\Q)$ by (i). Similarly $\left \lbrace x:\psi_2(x) -y-1 \geq 0 \right \rbrace \in {\GDMILPR}(\Q)$. Since ${\GDMILPR}(\Q)$ is an algebra (cf. \cref{thm:algebra}), its complement is in ${\GDMILPR}(\Q)$ and hence we have $\left \lbrace x: \psi_2(x) < y+1\right \rbrace \in \GDMILPR(\Q)$. Finally from the induction hypothesis, we have $\left \lbrace x: \psi_1(x) + y \leq 0\right \rbrace \in {\GDMILPR}(\Q)$. Since ${\GDMILPR}(\Q)$ is closed under finite intersections, the result follows.
    
    (iii) Set defined by (iii) is an intersection of sets defined in (i) and (ii).

    (iv)-(v) Sets defined here are complements of sets defined in (i)-(ii).
\end{proof}

\begin{lemma}\label{lem:GomoryisDMILP}
 Let $G:\R^n\mapsto\R$ be a Gomory function. Then (i) $\left \lbrace x:G(x) \geq 0\right \rbrace$, (ii) $\left \lbrace x:G(x) \leq 0\right \rbrace$, (iii) $\left \lbrace x:G(x) = 0\right \rbrace$, (iv) $\left \lbrace x:G(x) < 0\right \rbrace$, (v) $\left \lbrace x:G(x) > 0\right \rbrace$ are all in ${\GDMILPR}(\Q)$.
\end{lemma}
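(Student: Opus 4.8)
The plan is to reduce everything to \cref{lem:ChvatalisDMILP} together with the fact that $\GDMILPR(\Q)$ is an algebra (\cref{thm:algebra}), using only the elementary identities that relate the pointwise minimum of finitely many functions to unions and intersections of their sub- and superlevel sets. Write $G(x) = \min_{i=1}^k \psi_i(x)$, where each $\psi_i$ is a Chv\'atal function.

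For part (i), I would observe that $\left\lbrace x : G(x) \geq 0 \right\rbrace = \bigcap_{i=1}^k \left\lbrace x : \psi_i(x) \geq 0 \right\rbrace$, since a minimum of finitely many reals is nonnegative precisely when every term is nonnegative. Each set in the intersection lies in $\GDMILPR(\Q)$ by \cref{lem:ChvatalisDMILP}(i), and $\GDMILPR(\Q)$ is closed under finite intersections (by \cref{thm:algebra}, or directly \cref{lem:DMIIntersect}), so the intersection is again in $\GDMILPR(\Q)$. For part (ii), dually, $\left\lbrace x : G(x) \leq 0 \right\rbrace = \bigcup_{i=1}^k \left\lbrace x : \psi_i(x) \leq 0 \right\rbrace$, since the minimum is nonpositive precisely when some term is nonpositive; each term is in $\GDMILPR(\Q)$ by \cref{lem:ChvatalisDMILP}(ii), and $\GDMILPR(\Q)$ is closed under finite unions by \cref{lem:DMIRisFiniteUnion}. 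The same reasoning handles the strict cases: $\left\lbrace x : G(x) < 0 \right\rbrace = \bigcup_{i=1}^k \left\lbrace x : \psi_i(x) < 0 \right\rbrace$ (part (iv), using \cref{lem:ChvatalisDMILP}(iv) and closure under unions) and $\left\lbrace x : G(x) > 0 \right\rbrace = \bigcap_{i=1}^k \left\lbrace x : \psi_i(x) > 0 \right\rbrace$ (part (v), using \cref{lem:ChvatalisDMILP}(v) and closure under intersections). Finally, part (iii) follows since $\left\lbrace x : G(x) = 0 \right\rbrace$ is the intersection of the sets from (i) and (ii), which are both in $\GDMILPR(\Q)$.

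There is no substantive obstacle here; the one point that warrants care is keeping straight that a ``$\min$'' turns a ``$\geq$'' superlevel set into an \emph{intersection} but a ``$\leq$'' sublevel set into a \emph{union} (and the strict versions swap accordingly), so that one invokes the matching closure property of the algebra in each case. Everything else is an immediate consequence of the Chv\'atal-function case established in \cref{lem:ChvatalisDMILP} and the algebra structure of \cref{thm:algebra}.
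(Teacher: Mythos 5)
Your proposal is correct and follows essentially the same route as the paper: write $G = \min_i \psi_i$, express the superlevel set as a finite intersection and the sublevel set as a finite union of the corresponding sets for the Chv\'atal functions $\psi_i$, and invoke \cref{lem:ChvatalisDMILP} together with the closure properties of $\GDMILPR(\Q)$ from \cref{thm:algebra} and \cref{lem:DMIIntersect}. The only (immaterial) difference is in parts (iv)--(v), where the paper takes complements of the sets from (i)--(ii) while you instead use the strict-inequality cases of \cref{lem:ChvatalisDMILP} directly; both are valid and rest on the same underlying facts.
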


\begin{proof}
    Let $G(x) = \min_{i=1}^k\psi_i(x)$, where each $\psi_i$ is a Chv\'atal function. 
    
    (i) Note that $\left \lbrace x:G(x) \geq 0\right \rbrace = \bigcap_{i=1}^k \left \lbrace x:\psi_i(x) \geq 0\right \rbrace \in \GDMILPR(\Q)$ since each individual set in the finite intersection is in $\GDMILPR(\Q)$ by \cref{lem:ChvatalisDMILP} and $\GDMILPR(\Q)$ is closed under intersections by \cref{lem:DMIIntersect}.
    
    (ii) Note that $G(x) \leq 0$ if and only if there exists an $i$ such that $\psi_i(x)\leq0$. So $\left \lbrace x:G(x) \leq 0\right \rbrace = \bigcup_{i=1}^k \left \lbrace x:\psi_i(x) \leq 0\right \rbrace \in \GDMILPR(\Q)$ since each individual set in the finite union is in ${\GDMILPR}(\Q)$ by \cref{lem:ChvatalisDMILP}, and${\GDMILPR}(\Q)$ is an algebra by \cref{thm:algebra}.

(iii) This is the intersection of sets described in (i) and (ii).

    (iv)-(v) Sets defined here are complements of sets defined in (i)-(ii). 
\end{proof}

\begin{lemma}\label{lem:JeroisDMILP}
    Let $J:\R^n\mapsto\R$ be a Jeroslow function. Then (i) $\left \lbrace x:J(x) \geq 0\right \rbrace$, (ii) $\left \lbrace x:J(x) \leq 0\right \rbrace$, (iii) $\left \lbrace x:J(x) = 0\right \rbrace$, (iv) $\left \lbrace x:J(x) < 0\right \rbrace$, and (v) $\left \lbrace x:J(x) > 0\right \rbrace$ are all in ${\GDMILPR}(\Q)$.
\end{lemma}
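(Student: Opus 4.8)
The plan is to reduce the claim to the Gomory case already settled in \cref{lem:GomoryisDMILP}, by lifting the ``scaled floor'' operators $\floor{\cdot}_{E_i}$ to auxiliary integer variables, exactly in the style of the proof of \cref{lem:ChvatalisDMILP}. Write $J(x)=\max_{i\in\mathcal I}J_i(x)$ with $J_i(x):=G\!\left(\floor{x}_{E_i}\right)+w_i^\top\!\left(x-\floor{x}_{E_i}\right)$. A maximum of finitely many reals is $\ge 0$ iff some summand is, and is $\le 0$ iff every summand is, so
\begin{equation*}
\{x:J(x)\ge 0\}=\bigcup_{i\in\mathcal I}\{x:J_i(x)\ge 0\},\qquad \{x:J(x)\le 0\}=\bigcap_{i\in\mathcal I}\{x:J_i(x)\le 0\},
\end{equation*}
and the sets for $=0$, $<0$, $>0$ then arise by one further intersection and by complementation. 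Since $\GDMILPR(\Q)$ is an algebra (\cref{thm:algebra}) --- in particular closed under finite unions, under finite intersections (\cref{lem:DMIIntersect}), and under complements --- it is enough to show, for each fixed $i$, that $\{x:J_i(x)\ge 0\}$ and $\{x:J_i(x)\le 0\}$ lie in $\GDMILPR(\Q)$.

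Fix $i$ and abbreviate $E:=E_i$, $w:=w_i$. Introduce a variable $z\in\Z^n$ intended to equal $\floor{E^{-1}x}$; this is pinned down by the rational generalized-polyhedral constraints $E^{-1}x-z\ge 0$ and $E^{-1}x-z<\mathbf 1$ (with $E$ rational, so is $E^{-1}$). Put
\begin{equation*}
\Gamma:=\left\{(x,z)\in\R^n\times\Z^n : E^{-1}x-z\ge 0,\ E^{-1}x-z<\mathbf 1\right\},
\end{equation*}
which is precisely the set of mixed-integer points of a rational generalized polyhedron, so $\Gamma\in\GMILPR(\Q)\subseteq\GDMILPR(\Q)$. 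On $\Gamma$ one has $z=\floor{E^{-1}x}$, hence $\floor{x}_E=Ez$, and therefore $J_i(x)=\tilde G(x,z)$ on $\Gamma$, where
\begin{equation*}
\tilde G(x,z):=G(Ez)+w^\top x-w^\top Ez.
\end{equation*}
The function $\tilde G$ is a Gomory function on $\R^{2n}$: by the remark following \cref{def:JeroslowFunc}, $z\mapsto G(Ez)$ is Gomory (composition with a rational linear map), it remains Gomory after composition with the linear projection $(x,z)\mapsto z$, and adding the rational affine function $(x,z)\mapsto w^\top x-w^\top Ez$ preserves membership in the Gomory class.

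Now, for $\star\in\{\ge,\le\}$, since every $x$ admits a unique $z$ with $(x,z)\in\Gamma$ (namely $z=\floor{E^{-1}x}$), and for that pair $\tilde G(x,z)=J_i(x)$, we have
\begin{equation*}
\{x:J_i(x)\,\star\,0\}=\proj_x\!\left(\Gamma\cap\left\{(x,z):\tilde G(x,z)\,\star\,0\right\}\right).
\end{equation*}
By \cref{lem:GomoryisDMILP} applied in dimension $2n$, $\{(x,z):\tilde G(x,z)\,\star\,0\}\in\GDMILPR(\Q)$; its intersection with $\Gamma\in\GDMILPR(\Q)$ is again in $\GDMILPR(\Q)$ by \cref{lem:DMIIntersect}; and $\GDMILPR(\Q)$ is closed under rational projections, since a projection is a rational linear transform and the composition of such transforms is again one (the argument being exactly that of \cref{thm:ReprIncl}). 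Hence the projection onto $x$ stays in $\GDMILPR(\Q)$, so $\{x:J_i(x)\ge 0\},\{x:J_i(x)\le 0\}\in\GDMILPR(\Q)$, which finishes the proof.

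The one genuinely delicate step --- and the main obstacle --- is the bookkeeping that converts the scaled floor $\floor{x}_E=E\floor{E^{-1}x}$ into a bona fide Gomory function of a lifted integer variable: one must verify both that the lift $z=\floor{E^{-1}x}$ is captured exactly by generalized rational polyhedral inequalities (mixing strict and non-strict), and that post-composing with $E$ and adding the linear correction $w^\top(x-Ez)$ keeps the resulting function within the Gomory class, so that \cref{lem:GomoryisDMILP} is applicable. Everything after that is a routine invocation of the algebra structure of $\GDMILPR(\Q)$ together with its closure under projections.
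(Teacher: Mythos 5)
Your proof is correct and follows essentially the same route as the paper's: reduce to a single term $J_i$ via the algebra structure of $\GDMILPR(\Q)$ (union for $\geq$, intersection for $\leq$, then intersections and complements for the rest), lift the scaled floor to an auxiliary variable, invoke \cref{lem:GomoryisDMILP} in the lifted space, and project back using closure of $\GDMILPR(\Q)$ under rational linear maps. The only cosmetic difference is that you pin down $z=\floor{E^{-1}x}$ directly as the mixed-integer points of a rational generalized polyhedron via $0\le E^{-1}x-z<\mathbf 1$, $z\in\Z^n$, whereas the paper encodes the same lift as the Chv\'atal level-set constraint $E^{-1}y^1=\floor{E^{-1}x}$ handled by \cref{lem:ChvatalisDMILP}; both are valid.
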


\begin{proof}
	(i) Let $J(x) = \max_{i\in \mathcal{I}}  G\left (\floor{x}_{E_i}\right ) + w_i^\top (x - \floor{x}_{E_i})$ be a Jeroslow function, where $G$ is a Gomory function, $\mathcal I$ is a finite set, $\{E_i\}_{i\in \mathcal{I}}$ is set of rational invertible matrices indexed by $\mathcal{I}$, and $\{w_i\}_{i\in \mathcal{I}}$ is a set of rational vectors indexed by $\mathcal{I}$.  Since we have a maximum over finitely many sets, from the fact that ${\GDMILPR}(\Q)$ is an algebra, it suffices to show $\left \lbrace  x: G(\floor{x}_E) + w^\top (x - \floor{x}_E) \geq 0 \right \rbrace \in {\GDMILPR}(\Q)$ for arbitrary $E$, $w$ and Gomory function $G$. Observe that

\begin{align*}
    &\quad\left \lbrace  x: G(\floor{x}_E) + w^\top (x - \floor{x}_E) \geq 0 \right \rbrace %\quad&\in\quad {\GDMILPR}\\
    \quad = \quad\proj_x\left \lbrace (x, y^1, y^2, y^3) : \begin{array}{c} G(y^1)+y^2 \geq 0\\
    y^1 = \floor{x}_E \\
    y^2 = \langle w,\, y^3 \rangle\\
    y^3 = x-y^1
    \end{array} \right \rbrace %\quad&\in\quad {\GDMILPR}\\
\end{align*}   
and the set being projected in the right hand side above is equal to the following intersection 

\begin{align*}
       & \left \lbrace (x,y^1,y^2,y^3):\quad G(y^1)+y^2 \geq 0\right \rbrace \\
    \cap\quad& \left \lbrace (x,y^1,y^2,y^3):\quad E^{-1}y^1 = \floor{E^{-1}x} \right \rbrace \\
    \cap\quad&\left \lbrace (x,y^1,y^2,y^3):\quad y^2 = \langle w,\, y^3 \rangle \right \rbrace \\
    \cap\quad&\left \lbrace (x,y^1,y^2,y^3):\quad y^3 = x-y^1 \right \rbrace . 
    %\end{aligned}    
        %\quad&\in\quad {\GDMILPR}.
\end{align*}

Since each of the sets in the above intersection belong to ${\GDMILPR}(\Q)$ by \cref{lem:GomoryisDMILP,lem:ChvatalisDMILP}, and ${\GDMILPR}(\Q)$ is an algebra by \cref{thm:algebra}, we obtain the result.
    
(ii) As in (i), since we have maximum over finitely many sets, from the fact that ${\GDMILPR}(\Q)$ is an algebra (\cref{thm:algebra}), it suffices to show $\left \lbrace  x: G(\floor{x}_E) + w^\top (x - \floor{x}_E) \leq 0 \right \rbrace \in {\GDMILPR}(\Q)$ for arbitrary $E$, $w$ and Gomory function $G$. The same arguments as before pass through, except for replacing the $\geq$ in the first constraint with $\leq$.

(iii) This is the intersection of sets described in (i) and (ii).

    (iv)-(v) Sets defined here are complements of sets defined in (i)-(ii). 
\end{proof}

\begin{proof}[Proof of \cref{thm:ChvGomJer}]
    Follows from \cref{lem:ChvatalisDMILP,lem:GomoryisDMILP,lem:JeroisDMILP}.
\end{proof}

\subsection{General mixed-integer bilevel sets} % (fold)
\label{sub:general_mixed_integer_bilevel_sets}

% subsection general_mixed_integer_bilevel_sets (end)

We start by quoting an example from \citet{Koppe2010} showing that the MIBL set need not even be a closed set. This is the first relation in \cref{thm:RepMIBL}, showing a strict containment.

\begin{lemma}\cite[Example~1.1]{Koppe2010}\label{lem:StrictIncl}
The following holds:	
\begin{equation*}
\mathscr T_R^{MIBL}\setminus\mathscr \cl\left ( \mathscr T_R^{MIBL}\right) \neq \emptyset.
\end{equation*}
\end{lemma}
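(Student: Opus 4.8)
The plan is to exhibit a concrete mixed-integer bilevel linear set whose feasible region is not topologically closed, thereby witnessing that $\mathscr T_R^{MIBL}$ strictly contains $\cl(\mathscr T_R^{MIBL})$; since we have already observed that the closure of any set is closed, it suffices to produce one member of $\mathscr T_R^{MIBL}$ that is not closed. I would take the example of \citet{Koppe2010}, Example~1.1, essentially verbatim. Concretely, consider a follower with a single continuous variable $y$ solving $\max_y\{y : y \le 1,\ y \le x\}$ parametrized by the leader's variable $x$, with an integrality restriction placed on a leader variable (or a trivial integer variable adjoined so that $\mathcal I_L \neq \emptyset$, which keeps us inside $\mathscr T^{MIBL}$ without affecting the geometry). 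The lower-level optimal value is $\min\{1,x\}$, so the graph of rational reaction pairs is $\{(x,y) : y = \min\{1,x\}\}$. Intersecting with an upper-level constraint such as $x \le 1$ and $y \ge$ some bound, or restricting attention to the portion where the reaction is strictly below $1$, carves out a set of the form $\{(x,y): y = x,\ x < 1\} \cup \{(x,1) : x \ge 1,\ x \le b\}$ — the relevant point is that the branch of the reaction curve that is ``active'' under the chosen upper-level constraints fails to contain a limit point, producing a half-open set.

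The key steps, in order, are: (1) write down explicit data $A,B,b,f,C,D,g$ and an index set $\mathcal I_L$ (with $\mathcal I_F=\emptyset$ or not — either is fine, but taking $\mathcal I_F = \emptyset$ makes the arithmetic cleanest) realizing the set $S$ as an MIBL set in the sense of \cref{Def:MIBL}; (2) compute the lower-level value function $v(x) = \max\{f^\top y : Dy \le g - Cx\}$ in closed form (it is a piecewise-linear concave function of $x$, easily done by inspection in one or two dimensions); (3) determine the argmax correspondence, i.e.\ the set $S^2$ of pairs $(x,y)$ where $y$ attains $v(x)$; (4) intersect with $S^1$ (the upper-level constraints) and $S^3$ (the integrality constraints) and exhibit $S$ explicitly as a union of a relatively open piece and possibly a closed piece; (5) identify a sequence $(x_k,y_k) \in S$ converging to a point $(x^*,y^*) \notin S$, which shows $S \neq \cl(S)$; since $\cl(\mathscr T_R^{MIBL})$ consists only of closed sets, $S \in \mathscr T_R^{MIBL} \setminus \cl(\mathscr T_R^{MIBL})$, giving the claim.

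The main obstacle — such as it is — is bookkeeping rather than conceptual: one must be careful that the argmax constraint $S^2$ is written so that, after intersecting with the upper-level constraints, exactly one ``branch'' of the value-function graph survives and that branch is genuinely relatively open (i.e.\ the offending limit point is excluded because at that point the follower's optimal response jumps to a different value that violates an upper-level inequality). This is precisely the phenomenon highlighted in \citet{moore1990mixed} and made concrete in \citet{Koppe2010}: the discontinuity of the follower's optimal-solution correspondence, combined with a strict-feasibility-like effect induced by an upper-level constraint that is satisfied along the branch but violated at the limit. Since the example is already in the literature, I would simply reproduce the construction of \citet[Example~1.1]{Koppe2010}, verify that it fits \cref{Def:MIBL} (adjoining a dummy integer leader variable if needed so that it is literally an MIBL set and not merely a CBL set), and note that its non-closedness is immediate; no deep argument is required, which is why the statement is recorded as a lemma with an attributed one-line proof.

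\begin{proof}[Proof sketch of \cref{lem:StrictIncl}]
We reproduce \citet[Example~1.1]{Koppe2010}. Consider the MIBL set $S \subseteq \R^2$ (in variables $(x,y)$, with a dummy integer leader variable adjoined if a literal fit to \cref{Def:MIBL} with $\mathcal I_L \neq \emptyset$ is desired) given by the upper-level constraint $0 \le x \le 1$ together with the lower-level optimality requirement $y \in \arg\max_y\{\, y : y \le 0,\ y \le x - 1 \,\}$. For $x < 1$ the lower level is solved uniquely by $y = x-1 < 0$, while for $x = 1$ it is solved by $y = 0$; hence $S = \{(x, x-1) : 0 \le x < 1\} \cup \{(1,0)\}$, which is the union of a half-open segment and an isolated point and is therefore not closed, as the sequence $(1 - 1/k,\ -1/k) \in S$ converges to $(1,-1) \notin S$. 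Since every element of $\cl(\mathscr T_R^{MIBL})$ is a closed set, $S \in \mathscr T_R^{MIBL} \setminus \cl(\mathscr T_R^{MIBL})$, which proves the claim.
\end{proof}
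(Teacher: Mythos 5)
There is a genuine gap, and it is conceptual rather than clerical. Your concrete example has a continuous lower level ($\mathcal I_F = \emptyset$), and you explicitly say this choice ``makes the arithmetic cleanest.'' But no such example can work: by \cref{thm:RepEquiv} a CBL set is a finite union of polyhedra, and by \cref{lem:DMIPinUIMIBL} a BLP-UI set (integrality only in the upper level, even after adjoining your dummy integer leader variable) lies in $\DMILPR$, whose members are closed. The non-closedness of MIBL sets is caused precisely by integrality in the \emph{follower's} problem, which makes the argmax correspondence jump; this is why \cref{thm:RepMIBL} equates $\cl(\mathscr T_R^{MIBL}(\Q))$ with $\mathscr T_R^{BLP-UI}(\Q)$ but records a strict inclusion against $\mathscr T_R^{MIBL}(\Q)$ itself. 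The paper's proof (following the actual Example~1.1 of \citet{Koppe2010}) takes
\begin{equation*}
T = \left\{ (x,y): 0\leq x\leq 1,\ y\in \arg\min_y \left\{ y: y\geq x,\ 0\leq y\leq 1,\ y\in\Z \right\} \right\},
\end{equation*}
so that $y=\ceil{x}$ and $T=\{(0,0)\}\cup\bigl((0,1]\times\{1\}\bigr)$, which is not closed since $(0,1)$ is a limit point outside $T$. You have misremembered the cited example as a continuous one.

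Your specific computation also fails on its own terms. For the follower problem $\max_y\{y: y\le 0,\ y\le x-1\}$ with $0\le x\le 1$, the optimum is $y=\min\{0,x-1\}=x-1$ for every such $x$, including $x=1$ where $y=0=x-1$; there is no jump, and $S=\{(x,x-1): 0\le x\le 1\}$ is a closed segment. Moreover the sequence $(1-1/k,\,-1/k)$ converges to $(1,0)\in S$, not to $(1,-1)$. To repair the proof you must place the integrality constraint on the follower variable, as the paper does.
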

\begin{proof} 
	The following set $T$ is in $\mathscr T_R^{MIBL}\setminus\cl\left ( \mathscr T_R^{MIBL}\right) $:
	\begin{equation*}
		T\quad=\quad \left\{ (x,y)\in\R^2: 0\leq x\leq 1,\; y\in \arg\min_y \left\{ y:y\geq x,\,0\leq y\leq 1, y\in\Z \right\} \right\}.
	\end{equation*}
	By definition, $T\in \mathscr T_R^{MIBL}$. Observe that the bilevel constraint is satisfied only if $y = \ceil{x}$. So $T = \left\{ (0,\,0) \right\}\;\cup\; \left ((0,\,1]\times \left\{ 1 \right\}\right)$. So $T$ is not a closed set. Observing that every set in $\cl\left ( \mathscr T_R^{MIBL}\right)$ is closed, $T\not\in\cl\left ( \mathscr T_R^{MIBL}\right)$.
\end{proof}

Now we prove a lemma that states that rational MIBL-representable sets are in ${\GDMILPR}(\Q)$. %However, the closure of any set in ${\GDMILPR}$ is in ${\DMILPR}$. 
\begin{lemma}\label{lem:MIBLinDMI}
The following holds: $\mathscr T_R^{MIBL}(\Q) \subseteq \GDMILPR(\Q)$.
\end{lemma}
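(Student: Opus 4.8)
The plan is to take the value-function (optimistic/pessimistic-agnostic) reformulation of the bilevel constraint and then invoke the machinery of \cref{thm:ChvGomJer}. Recall that a set $S \in \mathscr T^{MIBL}(\Q)$ is of the form $S = S^1 \cap S^2 \cap S^3$ as in \cref{Def:MIBL}. The sets $S^1$ and $S^3$ are each (rational) mixed-integer sets in a regular polyhedron, hence trivially in $\GDMILPR(\Q)$, and by \cref{thm:algebra} it suffices to show $S^2 \in \GDMILPR(\Q)$. By \cref{thm:MIPvalfun} applied to the lower-level problem $\max_y\{f^\top y : Dy \le g - Cx,\ y_i \in \Z \text{ for } i \in \mathcal I_F\}$ (after the standard transformation to equality/standard form, introducing slacks and splitting free variables, which preserves rationality), there is a Jeroslow function $J$ such that the optimal value of the lower-level problem, as a function of the right-hand side $g - Cx$, equals $J(g - Cx)$ whenever the lower-level is feasible.

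The key step is then to write
\begin{align*}
S^2 \quad=\quad \Big\{(x,y) :\ & Dy \le g - Cx,\ y_i \in \Z \text{ for } i \in \mathcal I_F,\\
& f^\top y \ge J(g - Cx) \Big\},
\end{align*}
using that $y$ is optimal for the lower-level problem exactly when it is feasible and its objective value $f^\top y$ meets the optimal value $J(g-Cx)$. (Feasibility of $y$ automatically certifies feasibility of the lower-level problem, so there is no separate ``feasible for some $b$'' caveat to worry about.) The constraint $Dy \le g - Cx$ together with the integrality on $y_i$, $i \in \mathcal I_F$, defines a (rational) mixed-integer set in a regular polyhedron, which is in $\GDMILPR(\Q)$. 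For the constraint $f^\top y \ge J(g - Cx)$, introduce an auxiliary variable $t$ and write this as $\proj_{(x,y)}\{(x,y,t) : t = J(g-Cx),\ f^\top y - t \ge 0\}$. The set $\{(x,y,t) : f^\top y - t \ge 0\}$ is a halfspace, hence in $\GDMILPR(\Q)$; and the set $\{(x,y,t) : t = J(g-Cx)\}$ is the graph of a Jeroslow function composed with an affine map, which is in $\GDMILPR(\Q)$ because $\{(u,t) : t = J(u)\} = \{(u,t) : J(u) - t \ge 0\} \cap \{(u,t) : J(u) - t \le 0\}$, both factors lying in $\GDMILPR(\Q)$ by \cref{thm:ChvGomJer} (noting that $J(u) - t$, as a function of $(u,t)$, is again a Jeroslow function since the class is closed under addition of affine functions and composition with affine maps, as observed after \cref{def:JeroslowFunc}); then precompose with the rational affine map $x \mapsto g - Cx$. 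Intersecting these $\GDMILPR(\Q)$ sets and projecting out $t$ (projection preserves membership since $\GDMILPR(\Q) = \DMILPR(\Q)$-style families are closed under linear images, cf.\ \cref{lem:DMIRisFiniteUnion} and \cref{lem:ProjIsRelOpen}) yields $S^2 \in \GDMILPR(\Q)$. Finally, $S = S^1 \cap S^2 \cap S^3 \in \GDMILPR(\Q)$ by \cref{lem:DMIIntersect}, and since $\mathscr T_R^{MIBL}(\Q)$ consists of rational linear images of such $S$, closure of $\GDMILPR(\Q)$ under linear transforms (part of it being an algebra together with \cref{lem:DMIRisFiniteUnion}) finishes the argument.

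The main obstacle I anticipate is the bookkeeping around \cref{thm:MIPvalfun}: that theorem is stated for programs in the standard form $\max\{c^\top x + d^\top y : Ax + By = b,\ (x,y) \ge 0,\ x \in \Z^m\}$, whereas the lower-level problem in \cref{Def:MIBL} has inequality constraints, possibly free variables, and the right-hand side depends affinely on $x$ rather than being a free parameter $b$. I would need to carefully carry out the reduction to standard form (slacks, variable splitting), track that rationality is preserved, verify that the value function of the reduced problem pulled back through the affine substitution $b = g - Cx$ is still expressible as $J(g-Cx)$ for a Jeroslow function $J$ — and handle the degenerate case where the lower level is infeasible for some $x$ (there $S^2$ simply imposes no feasible $y$, so those $x$ drop out automatically, but this should be stated cleanly). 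Everything else is a routine assembly of the algebra properties already established.
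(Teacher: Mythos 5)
Your proposal is correct and follows essentially the same route as the paper: rewrite the bilevel constraint via Blair's value-function theorem (\cref{thm:MIPvalfun}), reduce to level sets of a Jeroslow function handled by \cref{lem:JeroisDMILP}, and assemble with the algebra property (\cref{thm:algebra}) and \cref{thm:ReprIncl}. The only differences are cosmetic --- the paper applies \cref{lem:JeroisDMILP} directly to $\{(x,y): f^\top y \ge J(g-Cx)\}$ rather than introducing the auxiliary variable $t$ and projecting, and you are in fact slightly more careful than the paper in retaining the lower-level feasibility constraints $Dy \le g - Cx$, $y_i \in \Z$ inside the rewriting of $S^2$.
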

\begin{proof}
    Recall from \cref{Def:MIBL} an element $S$ of $\mathscr T^{MIBL}(\Q)$ consists of the intersection $S^1 \cap S^2 \cap S^3$ (with rational data). From \cref{thm:MIPvalfun}, $S^2$ can be rewritten as $\left \lbrace (x,\,y) : f^\top  y \geq J(g-Cx) \right \rbrace$ for some Jeroslow function $J$. Thus, from \cref{lem:JeroisDMILP}, $S^2 \in  {\GDMILPR}(\Q)$. Moreover, $S^1,\,S^3 \in  {\GDMILPR}(\Q)$ since they are either rational polyhedra or mixed-integer points in rational polyhedra. Thus, $S=S^1\cap S^2\cap S^3 \in  {\GDMILPR}(\Q)$ by \cref{thm:algebra}, proving the inclusion. This shows that $\mathscr T^{MIBL}(\Q) \subseteq {\GDMILPR}(\Q),$ and by \cref{thm:ReprIncl} the result follows.
\end{proof}

\cref{lem:MIBLinDMI} gets us close to showing $\cl(\mathscr T_R^{MIBL}(\Q)) \subseteq \DMILPR(\Q)$, as required in \cref{thm:RepMIBL}. Indeed, we can immediately conclude from \cref{lem:MIBLinDMI} that $\cl(\mathscr T_R^{MIBL}(\Q)) \subseteq \cl(\GDMILPR(\Q))$. The next few results build towards showing that $\cl(\GDMILPR) = \DMILPR$, and consequently $\cl(\GDMILPR(\Q)) = \DMILPR(\Q)$. The latter is intuitive since closures of generalized polyhedra are regular polyhedra. As we shall see, the argument is a bit more delicate than this simple intuition. We first recall a couple of standard results on the closure operation $\cl(\cdot)$.

\begin{lemma}\label{lem:ClosureUnion}
    If $S_1,\ldots,S_k\in \R^n $ then $
        \cl \left ( \bigcup_{i=1}^n S_i\right ) = \bigcup_{i=1}^n \cl\left ( S_i\right ).$
\end{lemma}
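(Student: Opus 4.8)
The plan is to prove the two inclusions separately, using only three elementary facts about the closure operator on $\R^n$: (a) monotonicity, i.e., $A \subseteq B$ implies $\cl(A) \subseteq \cl(B)$; (b) $\cl(A)$ is the smallest closed set containing $A$; and (c) a \emph{finite} union of closed sets is closed. Since the statement involves only finitely many sets $S_1, \ldots, S_k$, all three facts are available.

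For the inclusion $\bigcup_{i} \cl(S_i) \subseteq \cl\!\left(\bigcup_{i} S_i\right)$, I would fix an index $i$ and observe that $S_i \subseteq \bigcup_j S_j$, so monotonicity (a) gives $\cl(S_i) \subseteq \cl\!\left(\bigcup_j S_j\right)$. As this holds for every $i$, taking the union over $i$ yields the inclusion. For the reverse inclusion $\cl\!\left(\bigcup_{i} S_i\right) \subseteq \bigcup_{i} \cl(S_i)$, I would note that $\bigcup_{i} \cl(S_i)$ is a finite union of closed sets and hence closed by (c), and that it contains $\bigcup_{i} S_i$ because $S_i \subseteq \cl(S_i)$ for each $i$. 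By (b), $\cl\!\left(\bigcup_{i} S_i\right)$ is contained in every closed set that contains $\bigcup_{i} S_i$, and in particular in $\bigcup_{i} \cl(S_i)$. Combining the two inclusions gives the claimed equality.

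There is no genuine obstacle in this argument — it is a standard point-set topology fact. The only thing worth flagging is that finiteness of the index set is essential: it is used in fact (c) for the ``$\subseteq$'' direction, and indeed an infinite union of closed sets need not be closed, which is precisely why the lemma is stated for finitely many sets. Everything else is purely formal manipulation of the closure operator.
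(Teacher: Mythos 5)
Your proof is correct and follows essentially the same route as the paper's: one inclusion via monotonicity of the closure operator applied to $S_i \subseteq \bigcup_j S_j$, and the reverse via observing that the finite union of the closures is a closed set containing $\bigcup_i S_i$, hence contains its closure. Your explicit remark that finiteness is needed only for the ``$\subseteq$'' direction is a worthwhile clarification, but the argument itself matches the paper's.
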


\begin{proof}
    Note $S_j \subseteq \bigcup_iS_i$ for any $j\in[k]$. So $\cl(S_j) \subseteq \cl (\bigcup_i S_i)$. So, by union on both sides over all $j\in[n]$, we have that the RHS is contained in LHS. Conversely, observe that the RHS is a closed set that contains every $S_i$. But by definition, LHS is the inclusion-wise {\em smallest} closed set that contains all $S_i$. So the LHS is contained in the RHS, proving the lemma.
\end{proof}

% \begin{lemma}\label{lem:ClosureProj}
%     Let $S\in\R^n$. Let $L$ be some linear transform {from $\R^n$ to $\R^m$}. Then, $\cl(L\cl(S)) = \cl(LS)$.
% \end{lemma}
% \begin{proof}
% {Consider the $\supseteq$ containment. Since $S \subseteq \cl(S)$ it follows that $LS \subseteq L\cl(S)$ and, in turn, $\cl(LS) \subseteq \cl(L\cl(S))$. Conversely, consider the $\subseteq$ constainment. Let $y \in \cl(L\cl(S))$. This means there exists a sequence of $y_k \in L\cl(S)$ such that $y_k \to y$ as $k \to \infty$. Since $y_k \in L \cl(S)$ there exists an $x_k \in \cl(S)$ such that $Lx_k = y_k$. Since for all $k$, $x_k \in \cl(S)$ there exists a sequence $x^\ell_k$ such that $x^\ell_k \to x_k$ as $\ell \to \infty$. Since $L$ is linear transform on $\R^n$ to $\R^m$ it is continuous and so $Lx^\ell_k \to y_k$ as $\ell \to \infty$. Since each $Lx^\ell_k$ is in $LS$, this implies $y_k \in \cl(LS)$. Since $\cl(LS)$ is closed this, in turn, means $y \in \cl(LS)$.} 
% \end{proof}

\begin{lemma}\label{lem:MinkProjec}
	Let $A, B$ be sets such that $A$ is a finite union of convex sets, $\cl(A)$ is compact and $B$ is closed. Then $\cl(A+B) = \cl(A) + B$.
\end{lemma}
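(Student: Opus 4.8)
The plan is to prove the two inclusions separately. For the inclusion $\cl(A) + B \subseteq \cl(A+B)$: since $B$ is closed (hence $B = \cl(B)$), and since for arbitrary sets one always has $\cl(X) + \cl(Y) \subseteq \cl(X+Y)$, we get $\cl(A) + B = \cl(A) + \cl(B) \subseteq \cl(A+B)$. The standard fact $\cl(X) + \cl(Y) \subseteq \cl(X+Y)$ follows by a routine sequence argument: if $a \in \cl(X)$ and $b \in \cl(Y)$, pick sequences $a_n \to a$ in $X$ and $b_n \to b$ in $Y$, so $a_n + b_n \to a+b$ with $a_n + b_n \in X+Y$.

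For the reverse inclusion $\cl(A+B) \subseteq \cl(A) + B$, it suffices to show that $\cl(A) + B$ is a closed set containing $A + B$, because $\cl(A+B)$ is by definition the smallest closed set containing $A+B$. Containment of $A+B$ is immediate since $A \subseteq \cl(A)$. The crux is therefore to show $\cl(A) + B$ is closed, and this is where the hypotheses on $A$ and $B$ are used. Write $A = \bigcup_{i=1}^k A_i$ with each $A_i$ convex; then $\cl(A) = \bigcup_{i=1}^k \cl(A_i)$ by \cref{lem:ClosureUnion}, so $\cl(A) + B = \bigcup_{i=1}^k (\cl(A_i) + B)$, a finite union; hence it suffices to show each $\cl(A_i) + B$ is closed. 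Now $\cl(A_i) \subseteq \cl(A)$ is compact (a closed subset of a compact set), and it is a classical fact that the Minkowski sum of a compact set and a closed set is closed: if $c_n + b_n \to z$ with $c_n \in \cl(A_i)$, $b_n \in B$, then by compactness a subsequence $c_{n_j} \to c \in \cl(A_i)$, whence $b_{n_j} = (c_{n_j} + b_{n_j}) - c_{n_j} \to z - c$, and since $B$ is closed, $z - c \in B$, so $z = c + (z-c) \in \cl(A_i) + B$.

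The main obstacle — really the only non-routine point — is recognizing that closedness of $\cl(A)+B$ cannot be argued directly from "$\cl(A)$ closed, $B$ closed" (the sum of two closed sets need not be closed), and that one genuinely needs the compactness of $\cl(A)$, together with the decomposition into a finite union of convex pieces so that \cref{lem:ClosureUnion} applies and the compactness/closedness argument can be run on each piece. Once that structure is in place, both inclusions are short sequence arguments. I would present it in exactly this order: first the easy inclusion via $\cl(X)+\cl(Y)\subseteq\cl(X+Y)$, then reduce the hard inclusion to showing $\cl(A)+B$ is closed, then decompose via \cref{lem:ClosureUnion} and finish with the compact-plus-closed lemma on each convex piece.
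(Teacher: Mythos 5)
Your proof is correct and rests on the same key idea as the paper's: for the hard inclusion, both arguments extract a convergent subsequence from the compact set $\cl(A)$ and use closedness of $B$ to place the residual limit in $B$. The only differences are cosmetic — you prove the easy inclusion via the elementary fact $\cl(X)+\cl(Y)\subseteq\cl(X+Y)$ rather than citing Rockafellar, and you decompose into convex pieces for the hard direction where the paper (correctly) runs the subsequence argument on $\cl(A)$ as a whole, the decomposition being unnecessary there since compactness of $\cl(A)$ alone suffices.
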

\begin{proof}
	For the inclusion $\supseteq$, we refer to Corollary 6.6.2 in \citet{Rockafellar1970}, which is true for arbitrary convex sets $A,\, B$. The result naturally extends using \cref{lem:ClosureUnion} even if $A$ is a finite union of convex sets. For the reverse inclusion, consider $z\in\cl(A+B)$. This means, there exist infinite sequences $\{x^i\}_{i=1}^\infty \subseteq A$ and $\{y^i\}_{i=1}^\infty\subseteq B$, such that the sequence $\{x^i + y^i\}_{i=1}^\infty$ converges to $z$. Now, since $\cl(A)\supseteq A$, $x^i\in\cl(A)$ and since $\cl(A)$ is compact, there exists a subsequence, which has a limit in $\cl(A)$. Without loss of generality, let us work only with such a subsequence $x^i$ and the limit as $x$. Now from the fact that each $y^i\in B$, $B$ is a closed set and the sequence $y^i$ converges to $z-x$, we can say $z-x \in B$. This proves the result, as we wrote $z$ as a sum of $x\in \cl(A)$ and $z-x \in B$.
\end{proof}

\begin{lemma}\label{lem:closure-of-GMILPR-is-DMILPR}
The following holds: $\cl(\GDMILPR) = \DMILPR$. Moreover, $\cl(\GDMILPR(\Q)) = \DMILPR(\Q)$.
\end{lemma}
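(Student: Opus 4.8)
The plan is to prove the two inclusions $\DMILPR \subseteq \cl(\GDMILPR)$ and $\cl(\GDMILPR) \subseteq \DMILPR$ separately, in each case using \cref{lem:ClosureUnion} to reduce the question to a single ``building block'' set and then invoking the structural characterizations of the two families, namely \cref{thm:JeroslowLowe} and \cref{cor:jeroslow-lowe-generalized}. The rational versions are obtained by the identical arguments with all data restricted to be rational.

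For $\DMILPR \subseteq \cl(\GDMILPR)$, I would show that every set in $\DMILPR$ is already closed and already lies in $\GDMILPR$, and hence equals its own closure. That $\DMILPR \subseteq \GDMILPR$ is immediate, since every regular polytope is a generalized polytope and every integer cone is a finitely generated monoid, so $\MILPR \subseteq \GMILPR$ and the claim follows from \cref{lem:DMIRisFiniteUnion}. For closedness, use \cref{thm:JeroslowLowe} together with \cref{lem:DMIRisFiniteUnion} to write a set $S \in \DMILPR$ as $\bigcup_i (P_i + C_i)$ with each $P_i$ a finite union of polytopes (hence compact) and each $C_i$ an integer cone; a (rational) integer cone is contained in a scaled copy of $\Z^n$, hence is discrete and closed, so $P_i + C_i$ is the Minkowski sum of a compact set and a closed set and is therefore closed, and a finite union of closed sets is closed. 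Thus $S = \cl(S) \in \cl(\GDMILPR)$.

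For $\cl(\GDMILPR) \subseteq \DMILPR$, let $S \in \GDMILPR$. By \cref{cor:jeroslow-lowe-generalized} write $S = \bigcup_j (P_j + M_j)$ with each $P_j$ a generalized polytope and each $M_j$ a finitely generated monoid. By \cref{lem:ClosureUnion}, $\cl(S) = \bigcup_j \cl(P_j + M_j)$, so it suffices to show $\cl(P_j + M_j) \in \DMILPR$ for each $j$. Here I would invoke \cref{lem:MinkProjec} with $A = P_j$ --- a single convex set whose closure $\cl(P_j)$ is a compact regular polytope, since $P_j$ is bounded --- and $B = M_j$, which is closed because a finitely generated (rational) monoid is discrete; this gives $\cl(P_j + M_j) = \cl(P_j) + M_j$. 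Since $\cl(P_j)$ is a polytope and $M_j = \inte\cone\{r^1, \dots, r^{k_j}\}$, the set $\cl(P_j) + M_j$ is the image under the linear map $(p, \lambda) \mapsto p + \sum_t \lambda_t r^t$ of the mixed-integer set $\{(p, \lambda) : p \in \cl(P_j),\ \lambda \in \Z_{\geq 0}^{k_j}\}$ sitting inside a polyhedron, hence lies in $\MILPR \subseteq \DMILPR$. Taking the finite union over $j$ gives $\cl(S) \in \DMILPR$.

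The step I expect to carry the real content --- and to be the main obstacle --- is the reduction $\cl(P_j + M_j) = \cl(P_j) + M_j$, which rests on the finitely generated monoid $M_j$ being a closed (indeed discrete) set, so that \cref{lem:MinkProjec} applies. This is precisely where the rational-data hypothesis is essential: an ``irrational'' finitely generated monoid such as $\{a + b\sqrt 2 : a, b \in \Z_{\geq 0}\}$ is dense in $\R_{\geq 0}$ and not closed, and the reduction would fail without rationality. Dually, on the first inclusion it is the closedness of (rational) MILP-representable sets --- again a consequence of the discreteness of their monoid part --- that ensures the closure operation does not enlarge them, so that they are fixed points of $\cl(\cdot)$.
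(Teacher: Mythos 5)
Your argument is correct for the rational version and follows essentially the same route as the paper: decompose a set in $\GDMILPR$ via \cref{cor:jeroslow-lowe-generalized} into a finite union of sets $P_j+M_j$, push the closure through the union with \cref{lem:ClosureUnion}, and use \cref{lem:MinkProjec} to obtain the key reduction $\cl(P_j+M_j)=\cl(P_j)+M_j$ before concluding that a polytope plus a finitely generated monoid lies in $\MILPR$. Two of your elaborations are welcome refinements of steps the paper compresses: you spell out why sets in $\DMILPR$ are closed (the paper dismisses the $\supseteq$ direction as trivial), and you verify that $\cl(P_j)+M_j\in\MILPR$ by an explicit linear map rather than by citing \cref{thm:JeroslowLowe}, which as stated in the paper only gives the forward implication ($T\in\MILPR(\Q)\Rightarrow T=P+C$), not the converse you actually need.

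The one substantive divergence concerns rationality. The lemma asserts both $\cl(\GDMILPR)=\DMILPR$ and its rational counterpart, and the paper's proof handles the general case by flatly asserting that ``finitely generated monoids are hence closed.'' You instead prove only the rational case and explicitly observe --- via the example $\{a+b\sqrt 2:a,b\in\Z_{\geq 0}\}$, which is dense in $\R_{\geq 0}$ --- that closedness of a finitely generated monoid genuinely requires the generators to be rational (or at least to generate a discrete group). Your caution here is well placed: it exposes that the paper's own argument for the unqualified statement leans both on this false closedness claim and on \cref{cor:jeroslow-lowe-generalized}, which is stated (and, per \cref{rem:rationality-assumption-1}, provable) only for rational data. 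So relative to the full statement your proof leaves the irrational case open, but this is a gap inherited from (indeed, surfaced by) the paper rather than a flaw in your reasoning; for the rational version, on which everything downstream (\cref{cor:closure-of-mibl-is-dmilpr}, \cref{thm:RepMIBL}) actually depends, your proof is complete.
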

\begin{proof}
The $\supseteq$ direction is trivial because sets in $\DMILPR$ are closed and a regular polyhedron is a type of generalized polyhedron. For the $\subseteq$ direction, let $S \in \cl(\GDMILPR)$; that is, $S = \cl (\bigcup_{i=1}^k (P_i + M_i))$ for some $P_i$ that are finite unions of generalized polytopes and $M_i$ that are finitely generated monoids. By \cref{lem:ClosureUnion}, this equals $\bigcup_{i=1}^k \cl(P_i + M_i)$. Observe that $P_i$ is a finite union of generalized {\em polytopes} and are hence bounded. Thus their closures are compact. Also $M_i$ are finitely generated monoids and are hence closed. Thus, by \cref{lem:MinkProjec}, we can write this is equal to $\bigcup_{i=1}^k \cl(P_i) + M_i$. But by \cref{thm:JeroslowLowe}, each of these sets $\cl(P_i)+M_i$ is in $\MILPR$. Thus, their finite union is in $\DMILPR$ by \cref{lem:DMIRisFiniteUnion}.

The rational version follows by assuming throughout the proof that the generalized polytopes and monoids are rational.
\end{proof}

The following is then an immediate corollary of \cref{lem:MIBLinDMI,lem:closure-of-GMILPR-is-DMILPR}.

\begin{corollary}\label{cor:closure-of-mibl-is-dmilpr}
The following holds: $\cl(\mathscr T_R^{MIBL}(\Q)) \subseteq \DMILPR(\Q)$.
\end{corollary}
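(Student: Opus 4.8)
The plan is to chain the two immediately preceding results, since all the substantive work has already been carried out there. First I would invoke \cref{lem:MIBLinDMI} to obtain the inclusion of families $\mathscr T_R^{MIBL}(\Q) \subseteq \GDMILPR(\Q)$. Next I would record the (trivial) monotonicity of the closure-of-a-family operation: if $\mathscr T^1 \subseteq \mathscr T^2$, then every $S \in \mathscr T^1$ also lies in $\mathscr T^2$, hence $\cl(S) \in \cl(\mathscr T^2)$, and therefore $\cl(\mathscr T^1) \subseteq \cl(\mathscr T^2)$. Applying this to the inclusion just obtained gives $\cl(\mathscr T_R^{MIBL}(\Q)) \subseteq \cl(\GDMILPR(\Q))$.

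The remaining step is to rewrite the right-hand side using \cref{lem:closure-of-GMILPR-is-DMILPR}, whose rational version states $\cl(\GDMILPR(\Q)) = \DMILPR(\Q)$. Combining the two inclusions then yields $\cl(\mathscr T_R^{MIBL}(\Q)) \subseteq \DMILPR(\Q)$, which is exactly the claim.

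Since \cref{lem:MIBLinDMI} (which itself rests on Blair's value-function theorem, \cref{lem:JeroisDMILP}, and the algebra property \cref{thm:algebra}) and \cref{lem:closure-of-GMILPR-is-DMILPR} (which rests on \cref{lem:ClosureUnion}, \cref{lem:MinkProjec}, and \cref{thm:JeroslowLowe}) together carry the full weight of the argument, there is no genuine obstacle at this stage; the only point needing a moment's care is the monotonicity of $\cl(\cdot)$ on families, and that is immediate from the definition of $\cl(\mathscr T)$ as the family of closures of members of $\mathscr T$. I would therefore keep the write-up to a couple of lines that simply cite \cref{lem:MIBLinDMI} and \cref{lem:closure-of-GMILPR-is-DMILPR} with the monotonicity remark in between.
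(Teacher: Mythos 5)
Your proposal is correct and matches the paper's argument exactly: the paper also deduces $\cl(\mathscr T_R^{MIBL}(\Q)) \subseteq \cl(\GDMILPR(\Q))$ from \cref{lem:MIBLinDMI} via monotonicity of the closure operation and then applies \cref{lem:closure-of-GMILPR-is-DMILPR} to identify the right-hand side with $\DMILPR(\Q)$. Nothing further is needed.
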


We are now ready to prove the main result of the section.

\begin{proof}[Proof of \cref{thm:RepMIBL}]
    The strict inclusion follows from \cref{lem:StrictIncl}. The equalities $\mathscr {T}_R^{BLP-UI}(\Q) = \DMILPR(\Q)$ and $\mathscr {T}_R^{BLP-UI} = \DMILPR$ are obtained from \cref{lem:DMIPinUIMIBL,lem:UIMIBLinDMIP}. For the equality $\cl(\mathscr T_R^{MIBL}(\Q)) = \DMILPR(\Q)$, the inclusion $\supseteq$ follows from the equality $\mathscr {T}_R^{BLP-UI}(\Q) = \DMILPR(\Q)$ and the facts that BLP-UI sets are MIBL sets and sets in $\DMILPR$ are closed. The reverse inclusion is immediate from \cref{cor:closure-of-mibl-is-dmilpr}. 
	%Now, take $S \in \cl\left (\mathscr T_R^{MIBL}\right )$. By \cref{lem:MIBLinDMI},  $S = \cl (\bigcup_{i=1}^k (P_i + M_i))$ for some $P_i$ that are finite unions of generalized polytopes and $M_i$ that are finitely generated monoids. By \cref{lem:ClosureUnion}, this equals $\bigcup_{i=1}^k \cl(P_i + M_i)$. Observe that $P_i$ is a finite union of generalized {\em polytopes} and are hence bounded. Thus their closures are compact. Also $M_i$ are finitely generated monoids and are hence closed. Thus, by \cref{lem:MinkProjec}, we can write this is equal to $\bigcup_{i=1}^k \cl(P_i) + M_i$. But by \cref{thm:JeroslowLowe}, each of these sets $\cl(P_i)+M_i$ is in $\MILPR$. Thus, their finite union is in $\DMILPR$ by Lemma~\ref{lem:DMIRisFiniteUnion}. %\chris{Do we need Lemma 49?}
	% The containment $\subseteq$ follows from \cref{lem:MIBLinDMI,lem:ClosureUnion,lem:ClosureProj} and the fact that the closure of a generalized polyhedral monoid is a regular polyhedral monoid. \chris{Even if these three lemmata are true, I think we need to spell things out a bit more. There is no need to be ``brisk'' in this proof. It is, after all, the main result of the paper.} \amitabh{This last argument again ignores unions and projections. I think you need two lemmata: 1) closure of a finite union is the union of the closures, 2) for any set $A\subseteq \R^{n+d}$, we have $\cl(\proj_n(\cl(A))) = \cl(\proj_n(A))$}\sri{I have addressed the first point. Need help with the second.}% \chris{Why is this obvious? This was maybe for the old definition of ${\GDMILPR}$?}
\end{proof}

%\amitabh{If we adjust the statement of \cref{thm:RepMIBL} to separate out the rational version from the general version (see comment after \cref{lem:UIMIBLinDMIP}), then we have to fine-tune the above proof.} \chris{I think its best we discuss this in the meeting.}

Finally, we prove \cref{thm:UpIntisNP}.
\begin{proof}[Proof of \cref{thm:UpIntisNP}.]
	Following the notation in \cref{Def:MIBL}, $S=S^1\cap S^2\cap S^3$. Since $|\mathcal I_F|=0$, $(x,\,y)\in S^2$ if and only if there exists a $\lambda\leq 0$ such that $D^\top \lambda = f$ and $\lambda^\top (g-Cx) = f^\top y$. However, this is same as checking if there exists $(x,\,y)$ such that $(x,\,y)\in S^1\cap S^3 \cap \left\{ (x,\,y): \exists\,\lambda\leq 0,\,D^\top \lambda =f,\, \lambda^\top g - f^\top y \leq \lambda^\top Cx  \right\}$ is non-empty. But this set is a set of linear inequalities, integrality requirements along with {\em exactly one } quadratic inequality. From \citet{del2017mixed}, this problem is in $\mathcal{NP}$.
\end{proof}

By reduction from regular integer programming, we obtain this corollary.

\begin{corollary}
	Bilevel linear programs with rational data integrality constraints only in the upper level is $\mathcal {NP}$-complete.
\end{corollary}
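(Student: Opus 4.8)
The plan is to establish $\mathcal{NP}$-hardness by a polynomial-time many-one reduction from \emph{integer linear feasibility} --- given a rational matrix $A$ and a rational vector $b$, decide whether $\{x\in\Z^n : Ax\le b\}\neq\emptyset$ --- which is a classical $\mathcal{NP}$-complete problem. Since \cref{thm:UpIntisNP} already places the non-emptiness question for rational BLP-UI sets in $\mathcal{NP}$, the reduction yields $\mathcal{NP}$-completeness.

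Given an instance $(A,b)$ of integer feasibility, I would construct a rational BLP-UI set in the sense of \cref{Def:MIBL} as follows. Let the leader control $x\in\R^n$, impose the leader constraints $Ax\le b$, and set $\mathcal I_L=[n]$ so that every leader variable is integer. To conform to the bilevel template, attach a single inert follower variable $y\in\R$ whose lower-level problem is $\max_y\{0\cdot y : 0\le y\le 1\}$ with $\mathcal I_F=\emptyset$; concretely, $C=0$, $D=(1,-1)^\top$, $g=(1,0)^\top$, $f=0$, and $B=0$. Because the follower objective is identically zero and the follower feasible region does not depend on $x$, every $y\in[0,1]$ is an optimal follower response, so the set $S^2$ of \cref{Def:MIBL} equals $\R^n\times[0,1]$. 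All data are rational and of size polynomial in that of $(A,b)$, so $S=S^1\cap S^2\cap S^3=\{(x,y): Ax\le b,\ x\in\Z^n,\ 0\le y\le 1\}$ is a rational BLP-UI set produced in polynomial time.

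It then remains only to note that $S\neq\emptyset$ if and only if $\{x\in\Z^n : Ax\le b\}\neq\emptyset$: any integer $x$ with $Ax\le b$ yields the point $(x,0)\in S$, and conversely the $x$-component of any point of $S$ is an integer solution of $Ax\le b$. This completes the reduction and establishes $\mathcal{NP}$-hardness; together with \cref{thm:UpIntisNP} it gives $\mathcal{NP}$-completeness.

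\textbf{Main obstacle.} There is essentially no technical obstacle here; the only point requiring (routine) care is to verify that the degenerate lower-level problem genuinely fits \cref{Def:MIBL} and that, with a zero follower objective and an $x$-independent follower feasible region, the optimality condition defining $S^2$ imposes nothing beyond $0\le y\le 1$ --- so that the projection of $S$ onto the $x$-variables is exactly the integer feasible region of the original instance.
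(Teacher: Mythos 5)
Your proposal is correct and matches the paper's argument: the paper likewise obtains hardness ``by reduction from regular integer programming'' and membership in $\mathcal{NP}$ from \cref{thm:UpIntisNP}; you have simply written out the (routine) reduction explicitly, and the details check out.
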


\section{Conclusion}\label{sec:conclusion}

In this paper we give a characterization of the types of sets that are representable by feasible regions of mixed-integer bilevel linear programs. In the case of bilevel linear programs with only continuous variables, the characterization is in terms of the finite unions of polyhedra. In the case of mixed-integer variables, the characterization is in terms of finite unions of generalized mixed-integer linear representable sets. Interestingly, the family of finite unions of polyhedra and the family of finite unions of generalized mixed-integer linear representable sets are both algebras of sets. The parallel between these two algebras suggests that generalized mixed-integer linear representable sets are the ``right'' geometric vocabulary for describing the interplay between projection, integrality, and bilevel structures. We are hopeful that the algebra properties of finite unions of generalized mixed-integer linear representable sets may prove useful in other contexts. %involving nontrivial structures, possibly similar to bilevel constraints, in the presence of integrality.

There remain important algorithmic and computational questions left unanswered in this study. For instance, is there any computational benefit of expressing a bilevel program in terms of disjunctions of (generalized) mixed-integer linear representable sets? Are there problems that are naturally expressed as disjunctions of generalized mixed-integer linear representable sets that can be solved using algorithms for mixed-integer bilevel optimization (such as the recent work by \citet{Fischetti2017,Wang2017}, etc.)? In the case of continuous bilevel linear problems, its equivalence with unions of polyhedra suggests a connection to solving problems over unions of polyhedra, possibly using the methodology of \citet{Balas1996}. The connection between bilevel constraints and linear complementarity also suggest a potential for interplay between the computational methods of both types of problems. 

% Finally, we state the following conjectures about representability of an $N$-level linear program that would extend the current paper. An $N$-level linear program maximizes $c^\top x + d^\top y$ subject to $Ax+By \leq b$ where $y$ solves an $N-1$-level linear program parametrized in $x$ and $x_i \in \Z$ if $i\in \mathcal{I}$. We call the problem as a continuous $N$-level problem, if there are no integral variables in any problem or subproblem. We conjecuture the following.
% 
% \begin{conjecture}[Continuous $N$-level problem]
  % The sets represented by continuous N-level problems are all finite union of polyhedra.
% \end{conjecture}
% \begin{conjecture}[General $N$-level problem]
  % The closure of any set represented by a general N-level problem is in $\GDMILPR$.
% \end{conjecture}
% 
% We believe the results in this paper provide a solid foundation for tackling these challenging conjectures. 
%\amitabh{I think we have resolved these conjectures. So we should remove them.} \sri{removed them}

\section*{Acknowledgments}

The first and third authors are supported by NSF grant CMMI1452820 and ONR grant N000141812096. The second author thanks the University of Chicago Booth School of Business for its generous financial support.

\bibliographystyle{plainnat}
\bibliography{library}
\end{document}